\DeclareMathSymbol{A}{\mathalpha}{operators}{`A}
\DeclareMathSymbol{B}{\mathalpha}{operators}{`B}
\DeclareMathSymbol{C}{\mathalpha}{operators}{`C}
\DeclareMathSymbol{D}{\mathalpha}{operators}{`D}
\DeclareMathSymbol{E}{\mathalpha}{operators}{`E}
\DeclareMathSymbol{F}{\mathalpha}{operators}{`F}
\DeclareMathSymbol{G}{\mathalpha}{operators}{`G}
\DeclareMathSymbol{H}{\mathalpha}{operators}{`H}
\DeclareMathSymbol{I}{\mathalpha}{operators}{`I}
\DeclareMathSymbol{J}{\mathalpha}{operators}{`J}
\DeclareMathSymbol{K}{\mathalpha}{operators}{`K}
\DeclareMathSymbol{L}{\mathalpha}{operators}{`L}
\DeclareMathSymbol{M}{\mathalpha}{operators}{`M}
\DeclareMathSymbol{N}{\mathalpha}{operators}{`N}
\DeclareMathSymbol{O}{\mathalpha}{operators}{`O}
\DeclareMathSymbol{P}{\mathalpha}{operators}{`P}
\DeclareMathSymbol{Q}{\mathalpha}{operators}{`Q}
\DeclareMathSymbol{R}{\mathalpha}{operators}{`R}
\DeclareMathSymbol{S}{\mathalpha}{operators}{`S}
\DeclareMathSymbol{T}{\mathalpha}{operators}{`T}
\DeclareMathSymbol{U}{\mathalpha}{operators}{`U}
\DeclareMathSymbol{V}{\mathalpha}{operators}{`V}
\DeclareMathSymbol{W}{\mathalpha}{operators}{`W}
\DeclareMathSymbol{X}{\mathalpha}{operators}{`X}
\DeclareMathSymbol{Y}{\mathalpha}{operators}{`Y}
\DeclareMathSymbol{Z}{\mathalpha}{operators}{`Z}
\newcommand{\integers}{\mathbf Z}
\newcommand{\rationals}{\mathbf Q}
\newcommand{\complexes}{\mathbf C}
\newcommand{\residue}{\textnormal{\textsf{f}}}
\newcommand{\Isoc}{\textnormal{\textsf{Isoc}}}
\newcommand{\Basic}{\textnormal{\textsf{Basic}}}
\newcommand{\unr}{\mathrm{unr}}
\newcommand{\Gal}{\operatorname{Gal}}
\newcommand{\Kottwitz}{\textnormal{Kott}}
\newcommand{\ord}{\mathrm{ord}}
\newcommand{\characteristic}{\operatorname{char}}
\newcommand{\Maps}{\operatorname{Maps}}
\newcommand{\Cov}{\textnormal{\textsf{Cov}}}
\newcommand{\cycle}{\textnormal{\textsf{Z}}}
\newcommand{\extension}{\textnormal{\textsf{E}}}
\newcommand{\SMaps}{\mathscr{M}aps}
\newcommand{\Hom}{\operatorname{Hom}}
\newcommand{\SHom}{\mathscr{H}om}
\newcommand{\Ker}{\mathrm{Ker}}
\newcommand{\id}{\mathrm{id}}
\newcommand{\Fib}{\mathrm{Fib}}
\newcommand{\deloop}{\textnormal{\textsf{B}}}
\DeclareMathOperator*\colim{colim}
\newcommand{\Spec}{\operatorname{Spec}}
\newcommand{\sconn}{\textnormal{sc}}
\newcommand{\adjoint}{\textnormal{ad}}
\newcommand{\abelian}{\textnormal{ab}}
\newcommand{\GL}{\mathrm{GL}}
\newcommand{\SQuad}{\mathscr{Q}uad}
\newcommand{\strict}{\textnormal{st}}
\newcommand{\LLC}{\textnormal{LLC}}
\newcommand{\tors}{\textnormal{-tors}}
\newcommand{\sgn}{\mathrm{sgn}}
\newcommand{\Hilbert}{\mathrm{Hilb}}
\newcommand{\Ktheory}{\textnormal{\textsf{K}}}
\newcommand{\Langlands}{\textnormal{\textsf{L}}}
\newcommand{\Translation}{\mathrm{T}}
\newtheorem{thm}[subsubsection]{Theorem}
\newtheorem*{thm*}{Theorem}
\newtheorem{thmx}{Theorem}
\newtheorem{prop}[subsubsection]{Proposition}
\newtheorem{lem}[subsubsection]{Lemma}
\newtheorem{conj}[subsubsection]{Conjecture}
\newtheorem{cor}[subsubsection]{Corollary}
\theoremstyle{definition}
\newtheorem{rem}[subsubsection]{Remark}
\numberwithin{equation}{section}
\newtheorem{untitledsubsubsection}[subsubsection]{}
\renewcommand{\@secnumfont}{\bfseries}
\newenvironment{void}
{\begin{untitledsubsubsection}}
{\end{untitledsubsubsection}}
\title[Extended pure inner forms of covers]{What are the extended pure inner forms of a cover?}
\author{Luozi Shi \and Yifei Zhao}
\date{\today}
\thanks{
The project was funded by the Deutsche Forschungsgemeinschaft (DFG, German Research Foundation) Project-ID 427320536 -- SFB 1442, as well as under Germany's Excellence Strategy EXC 2044 390685587, Mathematics Münster: Dynamics--Geometry--Structure.
}
\begin{document}

\begin{abstract}
Kottwitz suggested to study all extended pure inner forms together in the local Langlands correspondence for linear reductive groups. We extend this philosophy to a large class of covers, including those defined by Brylinski and Deligne, and explain its relation with Weissman's observation that L-packets for covers are sometimes empty.
\end{abstract}

\maketitle

\setcounter{tocdepth}{2}
\tableofcontents


\section*{Introduction}

The goal of this article is to define a notion of ``extended pure inner forms'' of a covering group and argue that it is relevant for the local Langlands program for covers.

Let us begin by describing the puzzle that motivated our consideration.

\subsection{The ``missing" $L$-packets}

In the usual local Langlands program, one takes as input a local field $F$ and a reductive group $F$-scheme $G$. To these data, one attaches the set $\Pi(G(F))$ of isomorphism classes of irreducible smooth $G(F)$-representations and the set $\Phi({}^LG)$ of L-parameters, and posits the existence of a natural map
\begin{equation}
\label{eq-local-langlands-correspondence-intro}
\LLC : \Pi(G(F)) \rightarrow \Phi({}^LG).
\end{equation}

The local Langlands program for covers requires a cohomological input $\mu$, in addition to $F$ and $G$. Traditionally, $\mu$ is defined in terms of algebraic K-theory (\emph{cf.}~\cite{MR1896177, MR3802418}). For now, let us ignore the precise meaning of $\mu$ and accept that it gives rise to a topological central extension
$$
1 \rightarrow A \rightarrow \widetilde G \rightarrow G(F) \rightarrow 1
$$
for a finite subgroup $A$ of $\complexes^{\times}$, as well as an ``L-group" $\widetilde H$. Imitating the linear situation, one posits the existence of a natural map
\begin{equation}
\label{eq-local-langlands-correspondence-cover-intro}
\LLC : \Pi(\widetilde G) \rightarrow \Phi(\widetilde H),
\end{equation}
where $\Pi(\widetilde G)$ is the set of isomorphism classes of irreducible genuine smooth $\widetilde G$-representations and $\Phi(\widetilde H)$ is the set of L-parameters defined in terms of $\widetilde H$. (The adjective ``genuine" means that $A$ acts through its inclusion in $\complexes^{\times}$.) We refer the reader to \cite{MR3802417, MR3802418, MR3802419} where foundations of this program are laid out.

The map \eqref{eq-local-langlands-correspondence-cover-intro} has been constructed by Weissman when $G$ is a split torus. He observed an intriguing phenomenon: It may \emph{not} be surjective. This stands in contrast with \eqref{eq-local-langlands-correspondence-intro}, which is expected to be surjective when $G$ is quasi-split.

The goal of our article is to relate this phenomenon to Kottwitz's philosophy of treating all extended pure inner forms of $G$ together in the formulation of \eqref{eq-local-langlands-correspondence-intro} (\emph{cf.}~\cite{MR809866, MR1485921, MR3163358}). Namely, for each basic $G$-isocrystal $\beta$, we shall construct a cover $\widetilde G_{\beta}$ of the extended pure inner form $G_{\beta}(F)$ associated to $\beta$. We expect \eqref{eq-local-langlands-correspondence-cover-intro} to fit into a family of maps
\begin{equation}
\label{eq-local-langlands-correspondence-cover-isocrystal-intro}
\LLC_{\beta} : \Pi(\widetilde G_{\beta}) \rightarrow \Phi(\widetilde H),
\end{equation}
parametrized by $\beta$. Weissman's observation above, in fact, leads to an obstruction $\Omega_{\beta}(\sigma)$ for the nonemptiness of $\LLC_{\beta}^{-1}(\sigma)$. Our main result determines the set of $\beta$ for which $\Omega_{\beta}(\sigma)$ vanishes. The key point is that this set of $\beta$ is always \emph{nonempty}, though it may not contain the trivial element. When $G$ is a torus, we prove that $\LLC_{\beta}^{-1}(\sigma)$ is indeed nonempty and finite whenever $\Omega_{\beta}(\sigma)$ vanishes.

Informally, our results indicate that the ``missing" L-packets observed by Weissman may appear on an ``extended pure inner form" $\widetilde G_{\beta}$ of the cover $\widetilde G$.

\subsection{Results}

Let us give a more precise account of the content of this article.

In the remainder of this introduction, we fix a local field $F$, a reductive group $F$-scheme $G$, and a finite subgroup $A$ of $\complexes^{\times}$ whose order is invertible in $F$.

Our first task is to define the cover $\widetilde G_{\beta}$ for an arbitrary $G$-isocrystal $\beta$, given the cohomological input $\mu$. It turns out that the K-theoretic formalism of Brylinski--Deligne (\emph{cf.}~\cite{MR1896177}) is too restrictive for this purpose. Instead, we take $\mu$ to be an \emph{\'etale metaplectic cover}, \emph{i.e.}~a morphism of pointed (higher) \'etale stacks (\emph{cf.}~\cite{MR1441006, MR3769731, zhao2022metaplectic})
$$
\deloop G \rightarrow \deloop^4A(1).
$$

The reason is as follows: An \'etale metaplectic cover $\mu$ of $G$ induces an \'etale metaplectic cover $\mu_{\beta}$ of each $G_{\beta}$, hence a cover $\widetilde G_{\beta}$ of $G_{\beta}(F)$. However, even if $\mu$ comes from algebraic K-theory, $\mu_{\beta}$ may not (\emph{cf.}~Remark \ref{rem-kazhdan-patterson-linear-difference}). In other words, \'etale metaplectic covers are necessary even if one is only interested in Brylinski--Deligne covers.

Next, we turn our attention to Langlands duality. The L-group of an \'etale metaplectic cover is defined in \cite{zhao2022metaplectic}. Let us sketch (a minor variant of) this construction, as it is important for the formulation of our main results.

This construction consists of three steps.

In the first step, we replace the pair $(G, \mu)$ by another one $(G^{\sharp}, \mu_{G^{\sharp}})$. Here, $G^{\sharp}$ is a  reductive group $F$-scheme, endowed with an \'etale metaplectic cover $\mu_{G^{\sharp}}$ which is ``as commutative as possible".

To explain the last phrase, we recall that every reductive group $F$-scheme $G$ maps to the stack quotient $G_{\abelian} := G/G_{\sconn}$, where $G_{\sconn}$ is the simply connected form of $G$. In fact, $G_{\abelian}$ is a commutative group stack and the gentlest kind of \'etale metaplectic covers are pulled back from ``$\integers$-linear" morphisms\footnote{In homotopical terms, this means morphisms of sheaves of $H\integers$-module spectra.}
$$
\deloop G_{\abelian} \rightarrow \deloop^4A(1)
$$
Such morphisms are parametrized by maps of complexes $\pi_1 G \rightarrow A[2]$. Kaletha (\emph{cf.}~\cite{kaletha2022}) has studied the covers defined by them, at least for quasi-split $G$, and reduced the Langlands correspondence for them to that for linear reductive groups.\footnote{However, Kaletha's construction of the covers is different from ours. His uses the Langlands duality for tori, and ours does not. The equivalence of these two constructions is a consequence of our results in \S\ref{sec-sharp-covers}.}

The next, and slightly less gentle, kind of \'etale metaplectic covers is pulled back from symmetric monoidal morphisms\footnote{This means morphisms of sheaves of (grouplike) $\mathbb E_{\infty}$-monoids, or equivalently of spectra.} from $\deloop G_{\abelian}$ to $\deloop^4A(1)$. The \'etale metaplectic cover $\mu_{G^{\sharp}}$ is of this kind. The passage from $(G, \mu)$ to $(G^{\sharp}, \mu_{G^{\sharp}})$ is analogous to the ``sharp cover" construction of Weissman (\emph{cf.}~\cite{MR3802418}).

The second step has to do with the subtle, but important difference between symmetric monoidal and $\integers$-linear morphisms from $\deloop G^{\sharp}_{\abelian}$ to $\deloop^4A(1)$. Namely, there is a canonical decomposition
	\begin{equation}
	\label{eq-intro-commutative-cover-canonical-deomposition}
	\mu_{G^{\sharp}} \cong \mu_{G^{\sharp}}^{(1)} + \mu_{G^{\sharp}}^{(2)}
	\end{equation}
where $\mu_{G^{\sharp}}^{(1)}$ is $2$-torsion and $\mu_{G^{\sharp}}^{(2)}$ comes from a $\integers$-linear morphism. The decomposition \eqref{eq-intro-commutative-cover-canonical-deomposition} appeared first in Gaitsgory and Lysenko's work (\emph{cf.}~\cite{MR3769731}), who used it to explain a sign occurring in the twisted geometric Satake equivalence.

The third step is the passage to the Galois side: We take $H$ to be the Langlands dual group of $G^{\sharp}$ and define a sum of $Z_H(\complexes)$-gerbes over the \'etale site of $\Spec F$
\begin{equation}
\label{eq-intro-ell-group-center}
\widetilde Z_H := \widetilde Z_H^{(1)} + \widetilde Z_H^{(2)},
\end{equation}
where $Z_H$ is the center of $H$. The summands in \eqref{eq-intro-ell-group-center} are constructed from the respective summands in \eqref{eq-intro-commutative-cover-canonical-deomposition}. The L-group $\widetilde H$ is obtained formally from $\widetilde Z_H$, by rewriting an \'etale gerbe as a Galois cocycle. In the K-theoretic context, Weissman defined the L-group as a Baer sum similar to the above (\emph{cf.}~\cite{MR3802418}). However, the decomposition \eqref{eq-intro-commutative-cover-canonical-deomposition} has no K-theoretic counterpart, so our formalism renders the situation more symmetric.

Slightly extending Kaletha's work, one can relate the Langlands duality for the ``sharp cover" $(G^{\sharp}, \mu_{G^{\sharp}})$ to that for linear reductive groups (\emph{cf.}~\S\ref{sec-duality-for-the-cocenter}). The passage from $(G, \mu)$ to $(G^{\sharp}, \mu_{G^{\sharp}})$ is more mysterious and is responsible for the ``missing" L-packets.

However, before we can go any further, we must first construct the Langlands duality for sharp covers of tori. The following result is proved in \S\ref{sec-sharp-covers}.

\begin{thmx}
\label{thmx-duality-sharp-torus}
Let $T$ be an $F$-torus equipped with a symmetric monoidal morphism $\mu : \deloop T \rightarrow \deloop^4A(1)$, defining a cover $\widetilde T$ and an L-group $\widetilde H$. There is a canonical bijection
$$
\Pi(\widetilde T) \xrightarrow{\simeq} \Phi(\widetilde H),
$$
where $\Pi(\widetilde T)$ is the set of genuine smooth characters of $\widetilde T$ and $\Phi(\widetilde H)$ is the set of L-parameters defined in terms of $\widetilde H$.
\end{thmx}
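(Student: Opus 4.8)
The strategy is to deduce the bijection from two ingredients: the ordinary Langlands correspondence for the linear torus $T$, and a class-field-theoretic comparison of the ``metaplectic twist'' appearing on the automorphic side with the one appearing on the Galois side. I would begin by making both sides explicit. Unwinding the construction of the cover, $\widetilde T$ is a topological central extension of $T(F)$ by $A$ whose commutator pairing $T(F) \times T(F) \to A$ is a Hilbert symbol evaluated against the bilinear form underlying $\mu$; a genuine character of $\widetilde T$ restricts to the inclusion on $A$ and so must annihilate this commutator, and when it does, $\Pi(\widetilde T)$ is a torsor under $\Hom_{\mathrm{cont}}(T(F),\complexes^{\times})$, with the residual obstruction to $\Pi(\widetilde T)\neq\emptyset$ being the class of $\widetilde T$ pushed to $\complexes^{\times}$-coefficients. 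Dually, unwinding the definition of $\widetilde H$ through the \'etale gerbe $\widetilde Z_H$, the set $\Phi(\widetilde H)$ is a torsor --- again possibly empty --- under $H^1(W_F,\widehat T)$, with obstruction to its nonemptiness the class of the gerbe $\widetilde Z_H$ in the corresponding second cohomology group.

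Next I would match the torsor groups and reduce to a single summand. The classical Langlands correspondence for the $F$-torus $T$ --- local class field theory together with Tate--Nakayama duality --- supplies a canonical isomorphism $\Hom_{\mathrm{cont}}(T(F),\complexes^{\times}) \simeq H^1(W_F,\widehat T)$, so once both sides of the theorem are shown to be simultaneously empty or nonempty it remains only to exhibit a canonical, basepoint-free identification of the two torsors compatible with this isomorphism. Here the decomposition $\mu \cong \mu^{(1)} + \mu^{(2)}$ of the introduction enters: the formations $\mu \mapsto \widetilde T$, $\mu \mapsto \widetilde Z_H$ and the associated torsors are all additive for Baer sum, so it suffices to treat $\mu^{(1)}$ (two-torsion) and $\mu^{(2)}$ ($\integers$-linear) separately. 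For $\mu^{(2)}$ the cover is of the kind studied by Kaletha, for which Langlands duality is established and reduces to that for a linear torus, with vanishing obstruction and a canonical torsor identification. Thus everything comes down to $\mu^{(1)}$.

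For the two-torsion part $\mu^{(1)}$, encoded by a $\Gal(\bar F/F)$-invariant quadratic form $q : X_*(T) \to \integers/2$, the cover of $T(F)$ carries a Weil-type cocycle built from quadratic Hilbert symbols (together with an index that, one checks, does not affect the extension class), while $\widetilde Z_H^{(1)}$ is the two-torsion gerbe attached to $q$ through the reciprocity map. The crux is that these two objects have the same obstruction class and, when it vanishes, the same canonical trivialization. I would prove this by an explicit cocycle computation, identifying both with the image of $q$ under one class-field-theoretic homomorphism --- roughly, the polar form of $q$, viewed as a Galois map $X_*(T)/2 \to \widehat T$, cup-multiplied against the canonical class in $H^2(F,\mu_2)$ --- using local Tate duality $H^2(F,\mu_2) \simeq \integers/2$ and the standard quadratic identities (the product formula for the Hilbert symbol, and Weil's computation of the associated index, equivalently the second Stiefel--Whitney class of the oscillator representation). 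With this in hand, the previous steps assemble the asserted canonical bijection.

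The main obstacle is exactly this last comparison. The automorphic-side cover is built from cocycles --- Hilbert symbols and Weil indices --- whereas the gerbe $\widetilde Z_H^{(1)}$ is built from reciprocity applied to $q$; that these carry the same class, so that a ``missing'' genuine character is mirrored precisely by a ``missing'' section of the dual gerbe, is not formal, and is the point at which Weissman's phenomenon becomes visible already for tori. Getting the two $2$-cocycles to agree on the nose, with the correct normalization of Tate twists, and then transporting the agreement through local class field theory, is where essentially all of the work of \S\ref{sec-sharp-covers} is concentrated.
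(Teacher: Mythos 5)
Your skeleton coincides with the paper's: reduce everything to a canonical comparison between the commutative extension $\widetilde T_{\zeta}$ of $T(F)$ and the $\check T(\complexes)$-gerbe $\widetilde{\check T}$, decompose $\mu\cong\mu^{(1)}+\mu^{(2)}$, and use Baer-sum additivity to treat the two summands separately (this is exactly how Theorem \ref{thm-torus-commutative-cover-duality} is proved). But two of your steps are not yet proofs. For $\mu^{(2)}$ you cannot simply invoke Kaletha: his cover attached to a $\integers$-linear $\mu$ is \emph{defined} by transporting the Galois gerbe through Langlands duality for tori, i.e.\ it is essentially the left-hand side of \eqref{eq-torus-linear-cover-duality}, whereas the cover in Theorem \ref{thmx-duality-sharp-torus} is the geometric one $\int_F\mu$. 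Identifying the two is precisely the content of \S\ref{sec-torus-linear-cover} and is where the work lies: one reduces to $\mathbb G_m$ over a splitting field, where the comparison amounts to the identification of Artin reciprocity with the cup product against Kummer classes (diagram \eqref{eq-artin-symbol-via-kummer}), and one must then pass from split to induced tori (Weil restriction and the $\nu^!$-adjunction) and from induced to general tori (extending $\mu$ along $T\hookrightarrow T'$, using that $\Spec F$ has cohomological dimension $2$). ``Vanishing obstruction'' is not the issue there, and none of this dévissage appears in your sketch.

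For $\mu^{(1)}$ the target you state---``same obstruction class and, when it vanishes, the same canonical trivialization''---is not the right statement. On the Galois side the obstruction group $H^2(W_F,\check T(\complexes))$ vanishes identically, so matching classes is vacuous (and, relatedly, for the sharp torus both $\Pi(\widetilde T)$ and $\Phi(\widetilde H)$ are always nonempty, so Weissman's missing-packet phenomenon is not what is at stake in this theorem); and neither side carries a canonical trivialization to compare. What must be produced is a canonical isomorphism of the actual objects \emph{together with the splitting after pullback along squaring}, since that datum is what allows the universal object over $F^{\times}$ to be induced along an arbitrary character $\epsilon:\Lambda\rightarrow\integers/2$ to give the cover of $T(F)$. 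This squaring section is exactly where the $\{a,a\}$-twist lives, and it is why the dual object is the meta-Weil group rather than a split extension: in the paper this is Proposition \ref{prop-sign-cover-identification} (the identification $\widetilde F^{\times}_{\sgn}\cong\widetilde F^{\times}_{\Hilbert}$ with $\tau(a)=(a^2,\{a,a\})$, coming from the binomial-coefficient splitting \eqref{eq-hilbert-cover-square-splitting}) together with Corollary \ref{cor-sign-cover-dual-identification}. Your plan of an explicit cocycle computation can be made to deliver this, but only if you track that section rather than just the extension class---the parenthetical ``an index that, one checks, does not affect the extension class'' is discarding precisely the relevant datum---and the Weil index or the Stiefel--Whitney class of the oscillator representation is neither needed nor an obvious substitute for it.
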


The special case of Theorem \ref{thmx-duality-sharp-torus} where $\mu$ comes from algebraic K-theory and $T$ is split is established by Weissman (\emph{cf.}~\cite[Part 3]{MR3802418}). This serves as justification for his definition of the L-group and is not at all a trivial consequence of class field theory.

Our proof of Theorem \ref{thmx-duality-sharp-torus} is independent of \emph{op.cit.}. It establishes that, in a precise sense, the decompositions \eqref{eq-intro-commutative-cover-canonical-deomposition} and \eqref{eq-intro-ell-group-center} match under Langlands duality. The main novelty in our proof is the treatment of a subtle $2$-torsion phenomenon, which explains how Gaitsgory and Lysenko's ``sign gerbe" (\emph{cf.}~\cite[\S4.8]{MR3769731}) and Weissman's meta-Galois group (\emph{cf.}~\cite[\S4]{MR3802418}) are interchanged under Langlands duality (\emph{cf.}~Corollary \ref{cor-sign-cover-dual-identification}).

We shall use Theorem \ref{thmx-duality-sharp-torus} to formulate the compatibility of the conjectural local Langlands correspondence for $(G, \mu)$ with ``central core characters". According to Weissman's vision, this is a substitute for the compatibility with central characters for the usual local Langlands correspondence.

Let us be more precise. There is a natural map from the center $Z^{\sharp}$ of $G^{\sharp}$ to the center $Z$ of $G$. (There is, however, no natural maps between $G^{\sharp}$ and $G$.) This map is compatible with their \'etale metaplectic covers, so it induces a map on the covers of their $F$-points
\begin{equation}
\label{eq-intro-induced-map-center-cover}
\widetilde Z^{\sharp} \rightarrow \widetilde Z.
\end{equation}

Given an irreducible genuine smooth representation $V$ of $\widetilde G$, the $\widetilde Z^{\sharp}$-action on $V$ through \eqref{eq-intro-induced-map-center-cover} is given by a genuine smooth character: This is the \emph{central core character} of $V$. Compatibility of the local Langlands correspondence for $(G, \mu)$ with central core characters asserts that the following diagram commutes:
\begin{equation}
\label{eq-intro-compatibility-central-character}
\begin{tikzcd}[column sep = 1.5em]
	\Pi(\widetilde G) \ar[d] \ar[r, "\LLC"] & \Phi(\widetilde H) \ar[d] \\
	\Pi(\widetilde Z^{\sharp}) \ar[r, "\simeq"] & \Phi(\widetilde H_{\abelian})
\end{tikzcd}
\end{equation}
Here, the left vertical arrow extracts the central core character, the right vertical arrow is the ``abelianization" of an L-parameter $\sigma$, and the lower horizontal equivalence is defined by Theorem \ref{thmx-duality-sharp-torus}, or rather, its mild generalization to disconnected groups.

Assuming the commutativity of \eqref{eq-intro-compatibility-central-character}, we can now explain the failure of surjectivity of $\LLC$. Let $K$ be the kernel of \eqref{eq-intro-induced-map-center-cover}. Given an L-parameter $\sigma \in \Phi(\widetilde H)$, whose abelianization corresponds to a genuine smooth character $\chi_{\sigma} : \widetilde Z^{\sharp} \rightarrow \complexes^{\times}$, if the restriction
\begin{equation}
\label{eq-intro-weissman-obstruction}
\Omega(\sigma) := \chi_{\sigma} |_K
\end{equation}
is nonzero, then the fiber of $\LLC$ at $\sigma$ is empty. This is because the central core character of any $V \in \Pi(\widetilde G)$ must annihilate $K$.

We call \eqref{eq-intro-weissman-obstruction} \emph{Weissman's obstruction}, as he first discovered it for tori (\emph{cf.}~\cite{MR2485462, MR3595494}). The main goal of this article is to generalize it to $\widetilde G_{\beta}$ for every $G$-isocrystal $\beta$ and to characterize those $\beta$ for which this obstruction vanishes.

The first task is easy, given our definition of $\widetilde G_{\beta}$. Indeed, there is a natural map $\widetilde Z^{\sharp} \rightarrow \widetilde G_{\beta}$ for every $G$-isocrystal $\beta$ with central image. This allows us to formulate the compatibility of the conjectural local Langlands correspondence \eqref{eq-local-langlands-correspondence-cover-isocrystal-intro} with central core characters, in analogy with \eqref{eq-intro-compatibility-central-character}. For each L-parameter $\sigma \in \Phi(\widetilde H)$, we may then define a character
$$
\Omega_{\beta}(\sigma) : K \rightarrow \complexes^{\times}.
$$
If $\Omega_{\beta}(\sigma)\neq 1$, then the fiber of $\LLC_{\beta}$ at $\sigma$ is empty.

Let us now state our main theorem. It will be established in \S\ref{sec-weissman-obstruction}.

\begin{thmx}
\label{thmx-kottwitz-invariant}
Let $G$ be a reductive group $F$-scheme endowed with an \'etale metaplectic cover $\mu$. Let $G^{\sharp}$ and $K$ be defined as above.
\begin{enumerate}
	\item There is a canonical exact sequence of abelian groups
	$$
	(\pi_1G^{\sharp})_{\Gal_F} \rightarrow (\pi_1G)_{\Gal_F} \xrightarrow{\gamma} \Hom(K, \complexes^{\times}) \rightarrow 1.
	$$
	
	\item For each $G$-isocrystal $\beta$ and L-parameter $\sigma$, the character $\Omega_{\beta}(\sigma)$ vanishes if and only if the Kottwitz invariant of $\beta$ maps to $\Omega(\sigma)^{-1}$ under $\gamma$.
\end{enumerate}
\end{thmx}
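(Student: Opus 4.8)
The plan is to reduce both assertions to a single identity on the center of $G$, and then to verify that identity by matching it, through Kottwitz's isomorphism, with a pairing from local class field theory. The first step is a reformulation of $\Omega_{\beta}(\sigma)$. The natural map $\widetilde Z^{\sharp}\to\widetilde G_{\beta}$ lies over $Z^{\sharp}(F)\to Z(F)=Z(G_{\beta})(F)\hookrightarrow G_{\beta}(F)$ and is the identity on $A$, so its kernel maps isomorphically onto $K=\ker\bigl(Z^{\sharp}(F)\to Z(F)\bigr)$, producing a canonical splitting $s_{\beta}\colon K\to\widetilde Z^{\sharp}$; for $\beta$ trivial this recovers the splitting $s_{1}\colon K\xrightarrow{\sim}\ker(\widetilde Z^{\sharp}\to\widetilde Z)$ implicit in the definition of $\Omega(\sigma)$. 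Since the central core character of any $V\in\Pi(\widetilde G_{\beta})$ annihilates $s_{\beta}(K)$, one has $\Omega_{\beta}(\sigma)=\chi_{\sigma}\circ s_{\beta}$; comparing the two splittings yields $\Omega_{\beta}(\sigma)=\Omega(\sigma)\cdot\bar\epsilon_{\beta}$ in $\Hom(K,\complexes^{\times})$, where $\bar\epsilon_{\beta}(k)=\iota\bigl(s_{\beta}(k)\,s_{1}(k)^{-1}\bigr)$ for the inclusion $\iota\colon A\hookrightarrow\complexes^{\times}$. Hence it suffices to construct $\gamma$ with the exactness in (1) and to prove the identity $\bar\epsilon_{\beta}=\gamma(\kappa(\beta))$ in $\Hom(K,\complexes^{\times})$: part (2) is then immediate, since $\Omega_{\beta}(\sigma)=1$ exactly when $\Omega(\sigma)=\bar\epsilon_{\beta}^{-1}=\gamma(\kappa(\beta))^{-1}$.

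For part (1), write $\mu_{K}=\ker(Z^{\sharp}\to Z)$, a finite $F$-group scheme of multiplicative type with $\mu_{K}(F)=K$; local Tate duality identifies $\Hom(K,\complexes^{\times})$ with $H^{2}(F,\mu_{K}^{\vee})$, $\mu_{K}^{\vee}$ the Cartier dual (for archimedean $F$ one uses the corresponding Tate duality). The sharp construction of \S\ref{sec-sharp-covers}, together with the map $Z^{\sharp}\to Z$, supplies a short exact sequence of complexes of Galois modules linking $\pi_{1}G^{\sharp}$, $\pi_{1}G$ and $\mu_{K}^{\vee}$; passing to $\Gal_{F}$-(co)homology and using $H^{3}(F,-)=0$ on torsion coefficients (or its archimedean analogue) gives the exact sequence of part (1), with $\gamma$ the connecting map. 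One then checks that $\gamma$ agrees with the Langlands-dual description of the relevant cup product furnished by Theorem \ref{thmx-duality-sharp-torus} and its disconnected variant; the vanishing of the composite $(\pi_{1}G^{\sharp})_{\Gal_{F}}\to\Hom(K,\complexes^{\times})$ encodes the fact, built into the sharp construction, that $\mu_{G^{\sharp}}|_{Z^{\sharp}}$ has trivial commutator pairing, so that isocrystals pulled back from $G^{\sharp}$ leave $\widetilde Z^{\sharp}$ unchanged.

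For the identity $\bar\epsilon_{\beta}=\gamma(\kappa(\beta))$, the five-term exact sequence of $1\to K\to Z^{\sharp}(F)\to\bar Z(F)\to1$, with $\bar Z$ the image of $Z^{\sharp}\to Z$, identifies $s_{\beta}s_{1}^{-1}$ with the transgression of $[\widetilde Z_{\beta}]-[\widetilde Z]\in H^{2}(Z(F),A)$, where $\widetilde Z_{\beta}$ is the cover of $Z(F)$ cut out by $\mu_{\beta}|_{Z}$. The trivialization over $Z^{\sharp}(F)$ needed to pin down this transgression is the canonical one coming from the identification $\mu_{G^{\sharp}}|_{Z^{\sharp}}=(\mu|_{Z})^{*}=(\mu_{\beta}|_{Z})^{*}$ of \S\ref{sec-sharp-covers}, so $\bar\epsilon_{\beta}$ is unambiguous. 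Since $\beta$ has central Newton point it is basic, and by functoriality under $Z\hookrightarrow G$ one reduces to $\beta$ coming from a $Z$-isocrystal; then $[\widetilde Z_{\beta}]-[\widetilde Z]$ is computed by the cup product of the class of $\beta$ against the bilinear form underlying $\mu|_{Z}$, a computation internal to the theory of covers of tori. The asserted identity then says that this cup product, read through Kottwitz's isomorphism between the set of basic $G$-isocrystals and $(\pi_{1}G)_{\Gal_{F}}$, coincides with the one packaged into $\gamma$ via the transgression above; both are incarnations of the Tate--Nakayama/Poitou--Tate pairing, and one matches them by tracing each through local duality.

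The main obstacle is this last matching: forcing the automorphic-side deformation of the central cover under an isocrystal twist to agree, on the nose, with the Galois-side map $\gamma$, compatibly with Kottwitz's isomorphism and local class field theory. Two further subtleties must be handled. First, $Z$ and $Z^{\sharp}$ may be disconnected, so $\mu_{K}$---and with it the Kottwitz invariant---splits into a toric part and a finite component-group part, which have to be matched separately; this is also what controls the reduction to $Z$-isocrystals above. Second, the $2$-torsion phenomena inherited from Theorem \ref{thmx-duality-sharp-torus}---the interchange of the sign-gerbe and meta-Galois contributions recorded in Corollary \ref{cor-sign-cover-dual-identification}---intervene because $\mu$ and $\mu_{G^{\sharp}}$ differ by a $2$-torsion symmetric monoidal cover, so their restrictions to the center are only comparable after that correction, which must then be tracked through both sides of the identity.
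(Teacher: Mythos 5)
Your reduction of part (2) to the single identity $\bar\epsilon_{\beta}=\gamma(\Kottwitz(\beta))$ is exactly the paper's strategy (this is Theorem \ref{thm-inclusion-difference-kottwitz-invariant}, with your $s_{\beta},s_{1}$ being the embeddings $i_{\beta},i$), but your proposed proof of that identity has a genuine gap. You claim that "by functoriality under $Z\hookrightarrow G$ one reduces to $\beta$ coming from a $Z$-isocrystal," after which the computation of $\widetilde Z_{\beta}-\widetilde Z$ is "internal to the theory of covers of tori." This reduction fails: the statement is for arbitrary $G$-isocrystals (not only basic ones with central Newton point), and even among basic ones most do not arise from $Z$ --- for $G$ adjoint, $Z$ is trivial while $(\pi_1G)_{\Gal_F}$ is not, and those are precisely the $\beta$ for which the theorem has content. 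What must be proved is that the difference $\widetilde Z_{\beta}-\widetilde Z$ depends only on the $G_{\abelian}$-isocrystal induced from $\beta$ and is given by pairing it against $Z$ via $b_2$; for nonabelian $G$ this is exactly the canonical quadratic structure $a^*\mu-(p_1)^*\mu-(p_2)^*\mu_Z\simeq b_2\otimes\deloop\Psi^{\otimes 2}$ (Proposition \ref{prop-canonical-quadratic-structure}, whose consequence is Proposition \ref{prop-isocrystal-center-cover}), the key technical ingredient of the paper, established in \S\ref{sec-canonical-quadratic-structure} by descent from split $G$ with a chosen Killing pair together with the compatibility Lemma \ref{lem-sconn-center-compatibility}. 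Your proposal assumes this input rather than proving it; once it is granted, the remaining matching with $\Kottwitz(\beta)$ through Tate duality is the easy step (diagram \eqref{eq-kottwitz-invariant-compatibility-with-tate-duality}, reduced to $\mathbb G_m$), not the "main obstacle" you identify.

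Two secondary points. For part (1), your detour through local Tate duality in degree $2$ and an unspecified exact sequence of complexes is both heavier than needed and unsubstantiated; the paper defines $\gamma$ at the level of $F$-points, applying Pontryagin duality (Lemma \ref{lem-adjoint-of-pairing-isomorphism}) to the forms $b$, $b_1$ to get $K\simeq\Hom\bigl((\pi_1G)_{\Gal_F}/(\pi_1G^{\sharp})_{\Gal_F},A\bigr)$ and then bi-dualizing. Since part (2) is an identity about this specific $\gamma$, you would in addition owe a comparison of your connecting-map $\gamma$ with the pairing entering your transgression computation, which you only gesture at. Finally, the anticipated $2$-torsion correction between $\mu|_{Z^{\sharp}}$ and $\mu_{G^{\sharp}}$ is a red herring: $\mu_{Z^{\sharp}}$ is literally the restriction of $\mu$ (Proposition \ref{prop-sharp-center-symmetric-monoidal}), and the sign-gerbe/meta-Galois subtlety enters only through Theorem \ref{thm-torus-commutative-cover-duality} in the definition of $\Omega$ and $\Omega_{\beta}$, not in the comparison identity between them.
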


In particular, for any L-parameter $\sigma$, there always exists a basic $G$-isocrystal $\beta$ for which $\Omega_{\beta}(\sigma)$ vanishes and the set of isomorphism classes of such $\beta$ is a torsor under the image of $(\pi_1G^{\sharp})_{\Gal_F}$. When $G$ is a torus, we prove that the vanishing of $\Omega_{\beta}(\sigma)$ is necessary and sufficient for $\LLC_{\beta}^{-1}(\sigma)$ to be nonempty (\emph{cf.}~Proposition \ref{prop-local-langlands-correspondence-tori}).

Let us say one word about the proof of Theorem \ref{thmx-kottwitz-invariant}.

The key ingredient is the ``canonical quadratic structure" of an \'etale metaplectic cover $\mu$ with respect to the $\deloop Z$-action on $\deloop G$ (\emph{cf.}~Proposition \ref{prop-canonical-quadratic-structure}). This appears to be a fundamental piece of structure of \'etale characteristic classes, valid over an arbitrary base scheme. Informally, it expresses
$$
\mu(\mathscr E\otimes \mathscr Z) - \mu(\mathscr E) -\mu(\mathscr Z_G),
$$
for any $G$-torsor $\mathscr E$ and $Z$-torsor $\mathscr Z$ (with induced $G$-torsor $\mathscr Z_G$), in terms of an explicit bilinear expression in $\mathscr Z$ and the $G_{\abelian}$-torsor induced from $\mathscr E$. One classical manifestation of this phenomenon is the formula of Chern classes
$$
c_2(\mathscr E \otimes \mathscr L) - c_2(\mathscr E) - c_2(\mathscr L^{\oplus n}) = (n-1) \cdot c_1(\det\mathscr E) \cup c_1(\mathscr L)
$$
for any rank-$n$ vector bundle $\mathscr E$ and any line bundle $\mathscr L$.

\subsection{Conventions}

This paper uses homotopical algebra as developed by Lurie (\emph{cf.}~\cite{MR2522659, lurie2017higher}). While certain aspects of the theory of covers can be handled using traditional methods of homological algebra\footnote{For example, by taking a simplicial resolution of $\deloop G$, one can encode an \'etale metaplectic cover as an \'etale hypercocycle, which is how this notion was originally conceived of by Deligne (\emph{cf.}~\cite{MR1441006}). The $\integers$-linear part of $\mu_{G^{\sharp}}$ can also be encoded by a map of complexes over $\Spec F$, hence by a Galois hypercocycle. This is how Kaletha describes them (\emph{cf.}~\cite{kaletha2022}).}, manipulations of higher symmetric monoidal structures in this article are infeasible without Lurie's theory.

Following Lurie's convention, we refer to $\infty$-groupoids as \emph{spaces}. We invoke the equivalence between connective spectra and grouplike $\mathbb E_{\infty}$-monoids (\emph{cf.}~\cite[Remark 5.2.6.26]{lurie2017higher}) and view them as spaces with additional structure. We abbreviate ``connective $H\integers$-module spectra" as \emph{$\integers$-linear spaces}. In particular, there are forgetful functors from $\integers$-linear spaces to grouplike $\mathbb E_{\infty}$-monoids, and from grouplike $\mathbb E_{\infty}$-monoids to pointed spaces.\footnote{For a usual groupoid, \emph{i.e.}~a $1$-truncated space, a $\integers$-linear structure is the structure of a strictly commutative Picard groupoid, whereas a grouplike $\mathbb E_{\infty}$-monoid structure is the structure of a Picard groupoid.} These are essentially the only higher algebraic structures we will need.

Given a scheme $S$, we use $\deloop$ to denote the deloop functor for fppf sheaves over $S$. In particular, for a group $S$-scheme $G$, $\deloop G$ is the usual classifying stack of $G$. If $G$ is smooth and affine, then its deloops in the fppf and \'etale topologies coincide.

Given an fppf sheaf of abelian groups $\mathscr A$ over $S$ and an integer $n\ge 1$, we view the $n$-fold deloop $\deloop^n\mathscr A$ as an fppf sheaf of $\integers$-linear spaces over $S$. If $\mathscr A$ is pulled back from the small \'etale site of $S$ (\emph{e.g.}~$\mathscr A\cong A(1)$ where $A$ is a finite abelian group of invertible order), then its deloops in the fppf and \'etale topologies coincide (\emph{cf.}~\cite[0DDT]{stacks-project}). We invoke this equivalence when applying the formalism of \cite{zhao2022metaplectic}.

We shall use ``Kummer theory" extensively in the following form. For any integer $n\ge 1$, we have the coboundary $\mathbb G_m \rightarrow \deloop \integers/n(1)$ of the Kummer exact sequence. This yields a morphism in the pro-category of fppf sheaves of $\integers$-linear spaces
$$
\Psi : \mathbb G_m \rightarrow \deloop \hat{\integers}(1) := \lim_n \deloop \integers/n(1),
$$
where the formal inverse limit is taken over the divisibility poset of positive integers. We shall also frequently use the deloop $\deloop\Psi : \deloop\mathbb G_m \rightarrow \deloop^2\hat{\integers}(1)$ of $\Psi$.

\subsection{Acknowledgements}
L.S.~would like to thank Jessica Fintzen for introducing him to the topic of covers. Y.Z.~thanks Tianyi Feng, Dennis Gaitsgory, Wee Teck Gan, and Tasho Kaletha for illuminating conversations about the subject of this article.

\medskip

\section{Covers of $G_{\beta}(F)$}

Let $F$ be a local field with residue field $\residue$. We fix an algebraic closure $\overline{\residue}$ of $\residue$.

In this section, we first recall the notion of $G$-isocrystals following Kottwitz (\emph{cf.}~\cite{MR809866, MR1485921}). Then we define the cover $\widetilde G_{\beta}$ for a reductive group $F$-scheme $G$ together with an \'etale metaplectic cover $\mu$ and a $G$-isocrystal $\beta$. Next, we recall some combinatorial data associated to $\mu$ in order to define the set of L-parameters and extend Weissman's conjectural local Langlands correspondence to $\widetilde G_{\beta}$  (\emph{cf.}~Conjecture \ref{conj-enhanced-local-langlands-correspondence}).

\subsection{$G$-isocrystals}

\begin{void}
Denote by $\breve F$ the completed maximal unramified extension of $F$ determined by $\overline{\residue}$. Denote by $q$ the cardinality of $\residue$. The $q$th power Frobenius of $\overline{\residue}$ extends by functoriality to an automorphism of $\breve F$, which we denote by $\sigma$.

Denote by $X$ the prestack quotient $\Spec(\breve F)/\sigma^{\integers}$. The inclusion $F \subset \breve F$ induces a morphism of prestacks
\begin{equation}
\label{eq-unramified-quotient-to-local-field}
	X \rightarrow \Spec F.
\end{equation}

In what follows, we treat $\Spec F$ as the base scheme, so fiber products taken over $\Spec F$ will be written without $\Spec F$.
\end{void}

\begin{void}
For any affine group $F$-scheme $G$ of finite type, we write $\Isoc_G$ for the groupoid of $G$-torsors over $X$, which we refer to as \emph{$G$-isocrystals}.

Equivalently, a $G$-isocrystal $\beta$ consists of a $G$-torsor $\mathscr E$ over $\Spec\breve F$ and an isomorphism of $G$-torsors $\varphi : \sigma^*\mathscr E \xrightarrow{\simeq} \mathscr E$.

Note that any element $g$ of $G(\breve F)$ defines a $G$-isocrystal $(\mathscr E, \varphi)$, where $\mathscr E$ is the trivial $G$-torsor over $\Spec \breve F$ and $\varphi$ is multiplication by $g$. Conversely, if a $G$-isocrystal is endowed with a trivialization over $\Spec \breve F$, then $\varphi$ is given by multiplication by an element of $G(\breve F)$.
\end{void}

\begin{rem}
\label{rem-isocrystal-conjugacy-class}
If $F$ is of characteristic zero and $G$ is connected, then any $G$-torsor over $\Spec\breve F$ is trivial (\emph{cf.}~\cite[Theorem 1.9]{MR180554}). If $F$ is of characteristic $p\neq 0$ and $G$ is reductive\footnote{Our convention is that ``reductive" implies ``connected".}, then any $G$-torsor over $\Spec\breve F$ is trivial (\emph{cf.}~\cite[\S8.6]{MR244259}).

For our purposes, however, we will sometimes need the case for disconnected $G$ such as the center of a reductive group $F$-scheme.
\end{rem}

\begin{void}
Given a $G$-isocrystal $\beta$, we write $G_{\beta}$ for the group $F$-sheaf of automorphisms of $\beta$. Namely, for any $F$-algebra $R$, an $R$-point of $G_{\beta}$ is an automorphism of the pullback of $\beta$ to the prestack $X\times\Spec R$.

By \cite[\S3.3]{MR1485921}, the group $F$-sheaf $G_{\beta}$ is represented by an affine group $F$-scheme.
\end{void}

\begin{void}
\label{void-reductive-group-notation}
For a reductive group scheme $G$ over any base scheme $S$, we employ the following notation: $G_{\sconn}$ (respectively $G_{\adjoint}$) stands for the simply connected (respectively adjoint) form of $G$. Write $T$ (respectively $T_{\sconn}$, $T_{\adjoint}$) for the abstract Cartan of $G$ (respectively $G_{\sconn}$, $G_{\adjoint}$). Denote by $\Lambda$ (respectively $\check{\Lambda}$) the fppf sheaf of cocharacters (respectively characters) of $T$. We use the same notation $\Lambda_{\sconn}$, $\Lambda_{\adjoint}$, \emph{etc.}~for $T_{\sconn}$ and $T_{\adjoint}$.

Denote by $\Delta \subset \Lambda$ (respectively $\check{\Delta} \subset \check{\Lambda}$) the subsheaf of simple coroots (respectively simple roots). Thus $\Delta$ generates $\Lambda_{\sconn}$ and $\check{\Delta}$ generates the subsheaf dual to $\Lambda_{\adjoint}$.

Denote by $Z$ the center of $G$, so we have a canonical isomorphism
$$
Z \cong \Fib(\Lambda \rightarrow \Lambda_{\adjoint}) \otimes \mathbb G_m,
$$
where $\Fib$ stands for the fiber of complexes of fppf sheaves of abelian groups, and the tensor product is understood in the derived sense.

Denote by $\pi_1 G$ the quotient of fppf sheaves $\Lambda/\Lambda_{\sconn}$. We view
$$
G_{\abelian} := \pi_1 G\otimes \mathbb G_m
$$
as an fppf sheaf of Picard groupoids and refer to it as the \emph{cocenter} of $G$. (It coincides with the abelianization of $G$ when $\pi_1 G$ is torsion-free.) The identification $Z/Z_{\sconn}\cong G/G_{\sconn}$ induces a monoidal morphism
\begin{equation}
\label{eq-group-scheme-cocenter}
G \rightarrow G_{\abelian}.
\end{equation}
\end{void}

\begin{void}
\label{void-kottwitz-invariant}
For a reductive group $F$-scheme $G$, we have the Kottwitz invariant
\begin{equation}
\label{eq-kottwitz-invariant}
\Kottwitz : \Isoc_G \rightarrow (\pi_1G)_{\Gal_F},
\end{equation}
where $(\pi_1 G)_{\Gal_F}$ denotes the group of Galois coinvariants of $\pi_1 G$.\footnote{The formation of Galois coinvariants does not require the choice of an algebraic closure of $F$.}

Since \eqref{eq-kottwitz-invariant} plays a major role in this text, let us recall its definition.

Denote by $\Isoc_{G_{\abelian}}$ the space of $G_{\abelian}$-torsors over $X$, \emph{i.e.}~maps from $X$ to $\deloop G_{\abelian}$. Since $T_{\sconn}$ is reductive, every $T_{\sconn}$-torsor over $\Spec\breve F$ is trivial (\emph{cf.}~Remark \ref{rem-isocrystal-conjugacy-class}). Combining this with the fact that $\integers$ has cohomological dimension $1$, we see that $H^2(X, T_{\sconn}) \cong 0$, so the quotient map yields an equivalence
\begin{equation}
\label{eq-cocenter-isocrystal}
\Isoc_T/\Isoc_{T_{\sconn}} \xrightarrow{\simeq} \Isoc_{G_{\abelian}}.
\end{equation}

The functorial isomorphism $\pi_0(\Isoc_T) \cong (X_*T)_{\Gal_F}$ for tori (\emph{cf.}~\cite[\S2.4]{MR809866}) induces an isomorphism $\pi_0(\Isoc_{G_{\abelian}}) \cong (\pi_1 G)_{\Gal_F}$ via \eqref{eq-cocenter-isocrystal}. We set \eqref{eq-kottwitz-invariant} to be the composition
\begin{align*}
\Isoc_G &\rightarrow \Isoc_{G_{\abelian}} \\
& \rightarrow \pi_0 (\Isoc_{G_{\abelian}}) \xrightarrow{\simeq} (\pi_1 G)_{\Gal_F},
\end{align*}
where the first map is defined by functoriality with respect to \eqref{eq-group-scheme-cocenter}.
\end{void}

\begin{void}
We keep the assumption that $G$ is reductive.

Recall that a $G$-isocrystal $\beta$ is \emph{basic} if its induced $G_{\adjoint}$-isocrystal is the pullback of a $G_{\adjoint}$-torsor over $\Spec F$ (\emph{cf.}~\cite[\S4.5, \S5.1]{MR809866}). Thus, if $\beta$ is basic, then $G_{\beta}$ is an inner form of $G$. Inner forms arising in this manner are called \emph{extended pure inner forms} of $G$. Denote by $\Basic_G$ the full subgroupoid of $\Isoc_G$ consisting of basic $G$-isocrystals.

According to \cite[Proposition 5.6]{MR809866}, \eqref{eq-kottwitz-invariant} induces a bijection
\begin{equation}
\label{eq-basic-isocrystal-kottwitz-invariant-bijection}
\pi_0(\Basic_G) \xrightarrow{\simeq} (\pi_1 G)_{\Gal_F}.
\end{equation}
\end{void}

\subsection{Construction of covers}
\label{sec-construction-of-covers}

\begin{void}
Let $G$ be an affine group $F$-scheme of finite type and $A$ be a finite abelian group whose order is invertible in $F$.

For each $n\in\integers$, write $A(n)$ for the corresponding Tate twist of $A$, viewed as an \'etale sheaf of finite abelian groups over $\Spec F$.
\end{void}

\begin{void}
Denote by $\Maps_e(\deloop G, \deloop^4A(1))$ the space of rigidified morphisms $\deloop G \rightarrow \deloop^4A(1)$, \emph{i.e.}~morphisms of pointed $F$-stacks. It admits a $\integers$-linear structure induced from the abelian group structure on $A(1)$.

For a topological group $K$, we refer to a topological central extension
$$
1 \rightarrow A \rightarrow \widetilde K \rightarrow K \rightarrow 1,
$$
where $\widetilde K \rightarrow K$ is a local homeomorphim, as a \emph{cover} of $K$. The collection of covers of $K$ form a $\integers$-linear groupoid under Baer sum, which we denote by $\Cov(K, A)$.

Let us equip $G(F)$ with the topology inherited from $F$. The construction of \cite[\S2.1]{zhao2022metaplectic} yields a $\integers$-linear functor
\begin{equation}
\label{eq-construction-of-covers}
\int_F : \Maps_e(\deloop G, \deloop^4A(1)) \rightarrow \Cov(G(F), A).
\end{equation}
\end{void}

\begin{void}
\label{void-isocrystal-translation-functor}
For any $G$-isocrystal $\beta$, we shall construct a functor
\begin{equation}
\label{eq-isocrystal-translation-functor}
\Translation_{\beta} : \Maps_e(\deloop G, \deloop^4A(1)) \rightarrow \Maps_e(\deloop G_{\beta}, \deloop^4A(1)),
\end{equation}
to be conceived of as ``translation by $\beta$".

If $G_{\beta}$ is of finite type, then by composing \eqref{eq-isocrystal-translation-functor} with the functor \eqref{eq-construction-of-covers} applied to $G_{\beta}$, we obtain a functor
\begin{equation}
\label{eq-construction-of-covers-isocrystal}
	\int_{F, \beta} : \Maps_e(\deloop G, \deloop^4A(1)) \rightarrow \Cov(G_{\beta}(F), A).
\end{equation}

Under \eqref{eq-construction-of-covers-isocrystal}, every rigidified morphism $\deloop G \rightarrow \deloop^4A(1)$ defines a cover $\widetilde G_{\beta}$ of $G_{\beta}(F)$.
\end{void}

\begin{void}\emph{Construction of $\Translation_{\beta}$.}
Let us view $\beta$ as a morphism $X\rightarrow \deloop G$. Since $G_{\beta}$ is the group $F$-sheaf of its automorphisms, $\beta$ extends to a morphism of fppf stacks
\begin{equation}
\label{eq-isocrystal-classifying-stack-translation-map}
X\times \deloop G_{\beta} \rightarrow \deloop G.
\end{equation}
More precisely, there is a natural morphism of group $X$-sheaves $X\times G_{\beta} \rightarrow X\times_{\deloop G}X$, where the target is the fiber product of $\beta$ with itself, and \eqref{eq-isocrystal-classifying-stack-translation-map} is obtained as its deloop.

Given a rigidified morphism $\mu : \deloop G \rightarrow \deloop^4A(1)$, the pullback of $\mu$ along \eqref{eq-isocrystal-classifying-stack-translation-map} is a morphism $X\times\deloop G_{\beta} \rightarrow \deloop^4A(1)$ whose restriction along the neutral section $e : X \rightarrow X\times\deloop G_{\beta}$ is isomorphic to $\beta^*\mu$. Thus, sending $\mu$ to the difference $\mu - p^*\beta^*\mu$, where $p : X\times\deloop G_{\beta} \rightarrow X$ is the projection, defines a functor
\begin{equation}
\label{eq-etale-metaplectic-cover-subtraction}
\Maps_e(\deloop G, \deloop^4A(1)) \rightarrow \Maps_e(X\times \deloop G_{\beta}, \deloop^4A(1)),
\end{equation}
where the target is the space of maps $X\times\deloop G_{\beta} \rightarrow \deloop^4A(1)$ rigidified along $e$.

Thanks to Lemma \ref{lem-weil-galois-torsion} below, pullback along the projection $X\times \deloop G_{\beta} \rightarrow \deloop G_{\beta}$ induces an isomorphism
\begin{equation}
\label{eq-isoc-classifying-stack-cohomology}
\Maps_e(\deloop G_{\beta}, \deloop^4A(1)) \xrightarrow{\simeq} \Maps_e(X\times\deloop G_{\beta}, \deloop^4A(1)).
\end{equation}

The desired functor \eqref{eq-isocrystal-translation-functor} is the composition of \eqref{eq-etale-metaplectic-cover-subtraction} with the inverse of \eqref{eq-isoc-classifying-stack-cohomology}.
\end{void}

\begin{lem}
\label{lem-weil-galois-torsion}
For any $F$-scheme $S$, pullback along the projection $S\times X \rightarrow S$ induces an isomorphism of \'etale cochains
\begin{equation}
\label{eq-weil-galois-torsion}
\Gamma(S, A(1)) \xrightarrow{\simeq} \Gamma(S\times X, A(1)).
\end{equation}
\end{lem}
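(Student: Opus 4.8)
The plan is to reduce the lemma to Galois descent along $\Spec\breve F\to\Spec F$ together with the comparison between the cohomologies of $\hat{\integers}$ and of $\integers$ with torsion coefficients. Write $S_{\breve F}:=S\times\Spec\breve F$, on which $\integers=\sigma^{\integers}$ acts through powers of $\sigma$, and recall that $\Gal(F^{\unr}/F)\cong\hat{\integers}$, topologically generated by the Frobenius $\sigma$. Since $X=\Spec(\breve F)/\sigma^{\integers}$ is, by definition, the colimit over $\Delta^{\mathrm{op}}$ of the bar construction of the $\integers$-action on $\Spec\breve F$, base change identifies $S\times X$ with the analogous quotient $S_{\breve F}/\sigma^{\integers}$; as \'etale cochains carry colimits of prestacks to limits and cochains of a disjoint union form the product, one obtains $\Gamma(S\times X,A(1))\simeq\Gamma(S_{\breve F},A(1))^{h\integers}$, the homotopy $\integers$-fixed points for $\sigma$. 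Using the standard length-one free resolution of $\integers$ over $\integers[\integers]$, the right-hand side sits in a fiber sequence
$$
\Gamma(S\times X,A(1))\rightarrow\Gamma(S_{\breve F},A(1))\xrightarrow{\,1-\sigma\,}\Gamma(S_{\breve F},A(1)),
$$
and the map induced by the projection $S\times X\rightarrow S$ is the canonical lift of $\Gamma(S,A(1))\rightarrow\Gamma(S_{\breve F},A(1))$ into this fiber. So it suffices to show that this last map identifies $\Gamma(S,A(1))$ with the fiber of $1-\sigma$.

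First I would pass to the non-completed maximal unramified extension $F^{\unr}=\colim_{F'}F'$, the colimit running over the finite unramified subextensions. As $|A|$ is invertible in $\residue$, the coefficients of $A(1)$ have order invertible in the residue field, so pullback along $\Spec\breve F\to\Spec F^{\unr}$ induces a $\sigma$-equivariant isomorphism $\Gamma(S_{F^{\unr}},A(1))\xrightarrow{\simeq}\Gamma(S_{\breve F},A(1))$ --- the invariance of \'etale cohomology under henselian completion of the base --- which one may invoke after the standard reduction to $S$ affine (which suffices for the applications, the general case following by descent). Then $S_{F^{\unr}}=\lim_{F'}S_{F'}$ is a cofiltered limit of qcqs schemes with affine transition maps, so $\Gamma(S_{F^{\unr}},A(1))\simeq\colim_{F'}\Gamma(S_{F'},A(1))$; in particular the $\hat{\integers}$-action on each cohomology group is smooth. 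Finally, $(S_{F'})$ is a pro-(finite \'etale) Galois cover of $S$ with group $\hat{\integers}$, so Galois descent gives $\Gamma(S,A(1))\simeq\Gamma(S_{F^{\unr}},A(1))^{h\hat{\integers}}$, continuous homotopy fixed points.

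Combining the two reductions, the lemma comes down to showing that the restriction map $\Gamma(S_{\breve F},A(1))^{h\hat{\integers}}\rightarrow\Gamma(S_{\breve F},A(1))^{h\integers}$ along $\integers\subset\hat{\integers}$ (the subgroup acting through the geometric Frobenius) is an equivalence. Since the $\hat{\integers}$-action on cohomology is smooth, a descent spectral sequence reduces this to the purely group-cohomological fact that for every discrete torsion $\hat{\integers}$-module $M$ the natural map $\homology^{*}_{\mathrm{cont}}(\hat{\integers},M)\rightarrow\homology^{*}(\integers,M)$ is an isomorphism, both sides being computed by the two-term complex $[M\xrightarrow{1-\sigma}M]$ in degrees $0$ and $1$; this in turn reduces by a filtered colimit to finite $M$, where it expresses the density of $\integers$ in $\hat{\integers}$ together with the fact that $\hat{\integers}$ and $\integers$ have cohomological dimension $1$ for torsion coefficients. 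Tracing the maps through the three steps then shows the morphism of the lemma is an equivalence. I expect the single genuinely non-formal ingredient to be the completion-invariance of \'etale cohomology used in the second paragraph; everything else is descent and standard profinite-group cohomology.
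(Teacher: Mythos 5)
Your argument is correct, and it shares the paper's endpoints but executes the middle differently. Like the paper, you first identify $\Gamma(S\times X, A(1))$ with the homotopy $\integers$-fixed points of $\Gamma(S\times\Spec\breve F, A(1))$, i.e.\ with the fiber of $1-\sigma$, and you finish with the same group-cohomological fact that $\hat{\integers}$- and $\integers$-cohomology agree on discrete torsion modules. The divergence is in how one passes from $\breve F$ to something controlled by $\Gal_F$: you compare $S\times\Spec\breve F$ with $S\times\Spec F^{\unr}$ directly, via invariance of \'etale cohomology (torsion coefficients) under completion of the henselian base field, then use continuity along $F^{\unr}=\colim F'$ and Galois descent along the pro-finite \'etale $\hat{\integers}$-cover $S\times\Spec F^{\unr}\rightarrow S$; the paper instead pushes forward along $\nu:\Spec\breve F\rightarrow\Spec F$, invokes a base change isomorphism $\Gamma(S\times\Spec\breve F, A(1))\simeq\Gamma(S,\nu_*A(1))$, and identifies the fiber of $\sigma^*-\id$ on $\nu_*A(1)$ with $A(1)$ stalkwise at a geometric point, replacing $\breve F$ by $F^{\unr}$ only at the level of the base field (equivalence of their small \'etale sites). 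The trade-off: your route needs the scheme-level completion-invariance statement (your acknowledged non-formal input; it can be supplied either by Fujiwara--Gabber-type results or, more economically, by the same field-level fact the paper uses together with Hochschild--Serre descent and invariance of torsion \'etale cohomology under extension of separably closed fields), plus the reduction to $S$ affine/qcqs for continuity and descent, which is harmless since \'etale cochains satisfy Zariski descent and so the lemma for arbitrary $S$ does follow; the paper's route trades this for a base change theorem for the non-finite-type morphism $\nu$ and then only elementary field theory. Both are sound, and your spectral-sequence comparison at the end converges without issue because $\hat{\integers}$ and $\integers$ have cohomological dimension $1$ on the (smooth, torsion) modules that arise.
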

\begin{proof}
Denote by $\nu : \Spec \check F \rightarrow \Spec F$ the natural map. The complex $\nu_*A(1)$ is endowed with an automorphism $\sigma^*$ defined by pullback along $\sigma : \Spec \check F \rightarrow \Spec \check F$. We claim that it is sufficient to identify the fiber of
\begin{equation}
\label{eq-automorphism-derived-invariant-fiber}
\sigma^* - \id : \nu_* A(1) \rightarrow \nu_* A(1)
\end{equation}
with $A(1)$, along the unit map $A(1) \rightarrow \nu_* A(1)$.

Indeed, the complex $\Gamma(S\times X, A(1))$ is the (derived) $\integers$-invariants of the complex $\Gamma(S\times\Spec\breve F, A(1))$, with $1\in\integers$ acting by $\sigma^*$. On the other hand, base change (\emph{cf.}~\cite[0F1I]{stacks-project}) yields an isomorphism
$$
\Gamma(S\times\Spec\breve F, A(1)) \xrightarrow{\simeq} \Gamma(S, \nu_*A(1)).
$$
Hence, $\Gamma(S\times X, A(1))$ is identified with the image under $\Gamma(S, -)$ of the fiber of \eqref{eq-automorphism-derived-invariant-fiber}. If the latter is identified with $A(1)$ along the unit map, then $\Gamma(S\times X, A(1))$ is identified with $\Gamma(S, A(1))$ along the pullback map.

We now identify the fiber of \eqref{eq-automorphism-derived-invariant-fiber}. Let $F^{\unr}$ denote the maximal unramified extension of $F$ determined by $\bar{\residue}$, so $\breve F$ is the completion of $F^{\unr}$. Pulling back along $\Spec \breve F \rightarrow \Spec F^{\unr}$ induces an equivalence of \'etale sites, so we may replace $\Spec \breve F$ by $\Spec F^{\unr}$.

The map $\Spec F^{\unr} \rightarrow \Spec F$ is a $\hat{\integers}$-torsor and the desired isomorphism can be verified at a geometric point of $\Spec F$. We thus reduce to the following assertion: For any torsion $\hat{\integers}$-module $M$, its $\hat{\integers}$-invariants coincide with its $\integers$-invariants along the natural map $\integers \rightarrow \hat{\integers}$. This follows from the computation of group cohomology of $\hat{\integers}$ (\emph{cf.}~\cite[XIII, \S1]{MR554237}).
\end{proof}

\begin{rem}
The proof of Lemma \ref{lem-weil-galois-torsion} applies when $A(1)$ is replaced by any torsion \'etale sheaf over $\Spec F$ of order invertible in $F$. It expresses the fact that the morphism \eqref{eq-unramified-quotient-to-local-field} induces, universally, an isomorphism on \'etale cohomology with such coefficients.
\end{rem}

\subsection{Combinatorics of covers}

\begin{void}
Let $G$ be a reductive group scheme over a base scheme $S$. Let $A$ be a finite abelian group whose order is invertible on $S$. We employ the notation of \S\ref{void-reductive-group-notation}.

Denote by $\Maps_e(\deloop G, \deloop^4A(1))$ the space of rigidified morphisms $\deloop G \rightarrow \deloop^4A(1)$. We shall recall certain combinatorial data associated to it.
\end{void}

\begin{void}
\label{void-strict-weyl-invariant-forms}
Given a quadratic form $Q : \Lambda \rightarrow A(-1)$, we write
$$
b : \Lambda \otimes \Lambda \rightarrow A(-1)
$$
for the associated symmetric form, sending $\lambda_1, \lambda_2 \in \Lambda$ to
$$
b(\lambda_1, \lambda_2) := Q(\lambda_1 + \lambda_2) - Q(\lambda_1) - Q(\lambda_2).
$$

We say that $Q$ is \emph{strictly Weyl-invariant} if the equality
\begin{equation}
\label{eq-strict-weyl-invariance}
b(\alpha, \lambda) = Q(\alpha) \langle \check{\alpha}, \lambda\rangle
\end{equation}
holds for any $\lambda \in \Lambda$ and any simple coroot $\alpha \in \Delta$.

The right-hand-side of \eqref{eq-strict-weyl-invariance} makes sense for any $\lambda \in \Lambda_{\adjoint}$, if we understand $\langle\cdot, \cdot\rangle$ as the canonical pairing between the root lattice and $\Lambda_{\adjoint}$. Thus it extends to a bilinear form
$$
b_1 : \Lambda_{\sconn} \otimes \Lambda_{\adjoint} \rightarrow A(-1).
$$
\end{void}

\begin{void}\emph{The pairing $b_2$.}
The coincidence between $b$ and $b_1$ over $\Lambda_{\sconn}\otimes \Lambda$ implies that their adjoints make the following diagram commute:
\begin{equation}
\label{eq-strict-weyl-invariant-form-extension}
\begin{tikzcd}[column sep = 1em]
	\Lambda \ar[r]\ar[d] & \SHom(\Lambda, A(-1)) \ar[d] \\
	\Lambda_{\adjoint} \ar[r] & \SHom(\Lambda_{\sconn}, A(-1))
\end{tikzcd}
\end{equation}

Taking fibers of the vertical maps, we obtain a map
\begin{equation}
\label{eq-center-cocenter-pairing-adjoint}
\Fib(\Lambda \rightarrow \Lambda_{\adjoint}) \rightarrow \SHom(\pi_1G, A(-1)).
\end{equation}
Denote by $b_2$ the bilinear pairing obtained from \eqref{eq-center-cocenter-pairing-adjoint} by passing to the adjoint
\begin{equation}
\label{eq-center-cocenter-pairing}
b_2 : \pi_1 G\otimes \Fib(\Lambda \rightarrow \Lambda_{\adjoint}) \rightarrow A(-1).
\end{equation}
\end{void}

\begin{void}
\label{void-sharp-lattices}
Denote by $\Lambda^{\sharp} \subset \Lambda$ the kernel of $b$. Denote by $\Lambda^{\sharp}_{\sconn} \subset \Lambda_{\sconn}$, $\Lambda^{\sharp}_{\adjoint} \subset \Lambda_{\adjoint}$ the kernels of $b_1$. Write $\check{\Lambda}^{\sharp}$ for the dual of $\Lambda^{\sharp}$.

For each simple coroot $\alpha \in \Delta$, we shall also write
$$
\alpha^{\sharp} := \ord(Q(\alpha))\cdot\alpha,\quad \check{\alpha}^{\sharp} := \ord(Q(\alpha))^{-1}\cdot\check{\alpha},
$$
where $\ord(Q(\alpha))$ denotes the order of $Q(\alpha) \in A(-1)$. The set $\Delta^{\sharp}$ of $\alpha^{\sharp}$ (respectively $\check{\Delta}^{\sharp}$ of $\check{\alpha}^{\sharp}$) forms a subsheaf of $\Lambda^{\sharp}$ (respectively $\check{\Lambda}^{\sharp}$).

Observe that $\Lambda_{\sconn}^{\sharp}$ is the span of $\Delta^{\sharp}$: An element $\sum_{\alpha \in \Delta} d_{\alpha}\cdot\alpha$ of $\Lambda_{\sconn}$ belongs to $\Lambda^{\sharp}_{\sconn}$ if and only if it pairs to zero under $b_1$ against each fundamental coweight $\omega_{\alpha}$, and this occurs if and only if $d_{\alpha}\cdot Q(\alpha) = 0$ for each $\alpha\in\Delta$. Likewise, $\Lambda^{\sharp}_{\adjoint}$ is dual to the span of $\check{\Delta}^{\sharp}\subset\check{\Lambda}^{\sharp}$.

Moreover, the quadruple 
\begin{equation}
\label{eq-metaplectic-dual-root-data}
(\Delta^{\sharp} \subset \Lambda^{\sharp}, \check{\Delta}^{\sharp} \subset \check{\Lambda}^{\sharp})
\end{equation}
is a locally constant \'etale sheaf of based root data over $S$. In particular, \eqref{eq-metaplectic-dual-root-data} is the root data of a reductive group $F$-scheme $G^{\sharp}$ with sheaf of cocharacters $\Lambda^{\sharp}$. We decorate with $(\cdot)^{\sharp}$ all the objects associated to $G^{\sharp}$ in \S\ref{void-reductive-group-notation}.
\end{void}

\begin{void}
\label{void-etale-metaplectic-cover-fiber-sequence}
Write $\SMaps_e(\deloop G, \deloop^4A(1))$ for the \'etale sheaf over $S$ whose sections over an $S$-scheme $S_1$ are rigidified morphisms $\deloop G\times_S S_1 \rightarrow \deloop^4A(1)$.

By \cite[Proposition 5.1.11]{zhao2022metaplectic}, there is a canonical fiber sequence
\begin{equation}
\label{eq-etale-metaplectic-cover-fiber-sequence}
\SHom_{\integers}(\pi_1 G, \deloop^2 A) \rightarrow \SMaps_e(\deloop G, \deloop^4A(1)) \rightarrow \SQuad(\Lambda, A(-1))_{\strict},
\end{equation}
where $\SHom_{\integers}$ denotes the \'etale sheaf of $\integers$-linear morphisms and $\SQuad(\Lambda, A(-1))_{\strict}$ denotes the \'etale sheaf of strictly Weyl-invariant quadratic forms on $\Lambda$. The first map in \eqref{eq-etale-metaplectic-cover-fiber-sequence} is defined by tensoring with $\deloop\Psi$ and pulling back along $\deloop G \rightarrow \deloop G_{\abelian}$.

In particular, to each rigidified morphism $\mu : \deloop G \rightarrow \deloop^4A(1)$, we may associate a strictly Weyl-invariant quadratic form $Q$ and pairings $b$, $b_1$, $b_2$ as well as the \'etale sheaf of based root data \eqref{eq-metaplectic-dual-root-data}.
\end{void}

\begin{prop}
\label{prop-sharp-center-symmetric-monoidal}
Let $\mu$ be a rigidified morphism $\deloop G \rightarrow \deloop^4A(1)$. The restriction of $\mu$ to $\deloop Z^{\sharp}$ canonically lifts to an $\mathbb E_{\infty}$-monoidal morphism
\begin{equation}
\label{eq-sharp-center-symmetric-monoidal}
\mu_{Z^{\sharp}} : \deloop Z^{\sharp} \rightarrow \deloop^4A(1),
\end{equation}
equipped with a trivialization over $\deloop Z^{\sharp}_{\sconn}$.
\end{prop}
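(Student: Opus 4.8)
The plan is to pull $\mu$ back along the natural map $Z^{\sharp}\to Z\hookrightarrow G$ — the first arrow induced by the inclusions $\Lambda^{\sharp}\subset\Lambda$, $\Lambda^{\sharp}_{\adjoint}\subset\Lambda_{\adjoint}$ of \S\ref{void-sharp-lattices}, the second the inclusion of the center — to obtain a rigidified morphism $\mu_{Z^{\sharp}}:=\mu|_{\deloop Z^{\sharp}}$, and then to upgrade it canonically to an $\mathbb E_{\infty}$-morphism carrying a trivialization over $\deloop Z^{\sharp}_{\sconn}$.

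\textbf{The combinatorial input.} By functoriality of the assignment $\mu\mapsto Q$ — restrict the cover along each cocharacter $\mathbb G_m\to Z^{\sharp}\to G$ and read off its quadratic form — the quadratic form attached to $\mu_{Z^{\sharp}}$ is the pullback of $Q$ along $X_*(Z^{\sharp})\to\Lambda^{\sharp}\hookrightarrow\Lambda$. Since $\Lambda^{\sharp}=\Ker(b)$, this form is \emph{additive}; being additive and quadratic, it is $2$-torsion valued. Moreover its restriction to $\Lambda^{\sharp}_{\sconn}$ vanishes: $Q(\alpha^{\sharp})=\ord(Q(\alpha))^{2}Q(\alpha)=0$, the lattice $\Lambda^{\sharp}_{\sconn}$ is spanned by the $\alpha^{\sharp}$, and $b$ vanishes on $\Lambda^{\sharp}_{\sconn}$. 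Hence this form descends to an additive $2$-torsion map $\bar Q^{\sharp}:\pi_1G^{\sharp}\to A(-1)$.

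\textbf{The $\mathbb E_{\infty}$-structure.} The Gaitsgory--Lysenko ``sign'' construction (\emph{cf.}~\cite[\S4.8]{MR3769731}, \cite{zhao2022metaplectic}) attaches to $\bar Q^{\sharp}$ a canonical symmetric monoidal morphism $\deloop G^{\sharp}_{\abelian}\to\deloop^4A(1)$; let $\mu^{(1)}$ be its pullback to $\deloop Z^{\sharp}$ along $Z^{\sharp}\to G^{\sharp}_{\abelian}$, an $\mathbb E_{\infty}$-morphism. By construction $\mu^{(1)}$ has the same quadratic form as $\mu_{Z^{\sharp}}$, so the difference $\mu^{(2)}:=\mu_{Z^{\sharp}}-\mu^{(1)}$, formed in the $\integers$-linear groupoid of rigidified morphisms $\deloop Z^{\sharp}\to\deloop^4A(1)$, has vanishing quadratic form; by \eqref{eq-etale-metaplectic-cover-fiber-sequence} and its extension to groups of multiplicative type (where strict Weyl-invariance is vacuous) it therefore comes canonically from the fiber term $\SHom_{\integers}(\Fib(\Lambda^{\sharp}\to\Lambda^{\sharp}_{\adjoint}),\deloop^2A)$, \emph{i.e.}~is of the shape $\phi\otimes\deloop\Psi$ for a canonical $\integers$-linear $\phi$, hence is itself $\mathbb E_{\infty}$. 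Thus $\mu_{Z^{\sharp}}\cong\mu^{(1)}+\mu^{(2)}$ acquires a canonical $\mathbb E_{\infty}$-structure. (In contrast with $G^{\sharp}$ itself, the splitting into a $2$-torsion part and a $\integers$-linear part is available already at this stage, with no passage to a cocenter, since $Z^{\sharp}$ is commutative.)

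\textbf{Trivialization over $\deloop Z^{\sharp}_{\sconn}$, and the main obstacle.} One has $\deloop Z^{\sharp}_{\sconn}=\Ker(\deloop Z^{\sharp}\to\deloop G^{\sharp}_{\abelian})$; as $\mu^{(1)}$ is, by construction, pulled back along $\deloop Z^{\sharp}\to\deloop G^{\sharp}_{\abelian}$, it is canonically trivial there. It remains to trivialize $\mu^{(2)}$ — equivalently, now, $\mu_{Z^{\sharp}}$ itself — over $\deloop Z^{\sharp}_{\sconn}$, and this is the step I expect to be the main difficulty: $Z^{\sharp}_{\sconn}$ is a finite group scheme with no cocharacters, so it is invisible to quadratic forms and nothing in the previous steps sees it. The route I would take is to factor $Z^{\sharp}_{\sconn}\to Z_{\sconn}\hookrightarrow G_{\sconn}$ and exploit that, $G_{\sconn}$ being simply connected, the fiber term $\SHom_{\integers}(\pi_1G_{\sconn},\deloop^2A)$ of \eqref{eq-etale-metaplectic-cover-fiber-sequence} is contractible, so $\mu|_{\deloop G_{\sconn}}$ is pinned down by its quadratic form; one then checks that the rescaling sending $\alpha$ to $\alpha^{\sharp}=\ord(Q(\alpha))\cdot\alpha$, which is built into $Z^{\sharp}_{\sconn}$, forces the restriction to $\deloop Z^{\sharp}_{\sconn}$ of the characteristic class representing $\mu|_{\deloop G_{\sconn}}$ to be annihilated once one multiplies by the coefficients $Q(\alpha)$ (whose orders are precisely the integers $\ord(Q(\alpha))$). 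Recording the two trivializations compatibly with the $\mathbb E_{\infty}$-structure of the previous step then finishes the proof.
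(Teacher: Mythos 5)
There is a genuine gap, and it sits earlier in your argument than the place you flag. Your key step asserts that the difference $\mu^{(2)}:=\mu_{Z^{\sharp}}-\mu^{(1)}$, having vanishing quadratic form, ``therefore comes canonically from the fiber term'' and is of the shape $\phi\otimes\deloop\Psi$, by an ``extension of \eqref{eq-etale-metaplectic-cover-fiber-sequence} to groups of multiplicative type.'' No such classification is available for $Z^{\sharp}$, and it is false as stated: $Z^{\sharp}$ is a disconnected group of multiplicative type, and when $G$ is semisimple it is a \emph{finite} group scheme such as $\mu_n$. Pointed morphisms $\deloop\mu_n\rightarrow\deloop^4A(1)$ are governed by the (reduced) \'etale cohomology of $\deloop\mu_n$, which over $\bar F$ contains classes like the cup square of a degree-two class; these are invisible to cocharacter quadratic forms (there are no cocharacters) and are not $\integers$-linear, since $\integers$-linear maps $\mu_n[1]\rightarrow A(1)[4]$ are controlled by $\mathrm{Ext}^3_{\integers}(\mu_n,A(1))$, which vanishes over a separably closed field. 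So ``vanishing quadratic form'' does not force $\mu^{(2)}$ to be $\integers$-linear, and your $\mathbb E_{\infty}$-structure on $\mu_{Z^{\sharp}}$ is not produced. Of course, the specific restriction $\mu|_{\deloop Z^{\sharp}}$ does satisfy extra constraints because it extends over $\deloop G$ — but exploiting that is exactly the content of the proposition, not something you may assume. The same objection, as you yourself notice, makes the trivialization over $\deloop Z^{\sharp}_{\sconn}$ inaccessible to your combinatorial bookkeeping; your sketch there (rescaling by $\ord(Q(\alpha))$ ``annihilates'' the class) is not an argument, and the trivialization you would need must moreover be compatible with the $\mathbb E_{\infty}$-structure, since it is later used to descend $\mu_{Z^{\sharp}}$ to $\deloop G^{\sharp}_{\abelian}=\deloop(Z^{\sharp}/Z^{\sharp}_{\sconn})$.

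The paper avoids all of this by not working on $\deloop Z^{\sharp}$ directly: it invokes the canonical $\mathbb E_{\infty}$-monoidal morphism $\mu_{T^{\sharp}}:\deloop T^{\sharp}\rightarrow\deloop^4A(1)$, already constructed in \cite[\S6.1]{zhao2022metaplectic} together with its trivialization over $\deloop T^{\sharp}_{\sconn}$, and the only thing left to prove is that $\mu_{T^{\sharp}}|_{\deloop Z^{\sharp}}\cong\mu|_{\deloop Z^{\sharp}}$. That identification is made \'etale locally by choosing a Borel $B\subset G$, restricting $\mu$ to $\deloop B$, descending to $\deloop T$ and pulling back to $\deloop T^{\sharp}$; the real content is then the independence of the choice of $B$, handled as in \cite[\S5.2.6]{zhao2022metaplectic}. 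In other words, the $\mathbb E_{\infty}$-structure and the trivialization are imported from the torus case, where the fiber-sequence classification you wanted to use actually holds; your proposal tries to rebuild them on $\deloop Z^{\sharp}$ itself, where no such classification exists. If you want to salvage your route, you would have to first relate $\mu|_{\deloop Z^{\sharp}}$ to a restriction from $\deloop T^{\sharp}$ (or $\deloop B$) — at which point you are reproducing the paper's argument, Borel-independence included.
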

\begin{proof}
In \cite[\S6.1]{zhao2022metaplectic}, we construct from $\mu$ a canonical $\mathbb E_{\infty}$-monoidal morphism $\mu_{T^{\sharp}} : \deloop T^{\sharp} \rightarrow \deloop^4A(1)$ endowed with a trivialization over $\deloop T^{\sharp}_{\sconn}$. It is enough to identify the restriction of $\mu_{T^{\sharp}}$ to $\deloop Z^{\sharp}$ with the restriction of $\mu$.

To do this, we recall that $\mu_{T^{\sharp}}$ is constructed, \'etale locally over $\Spec F$, by choosing a Borel subgroup $B\subset G$ and restricting $\mu$ to $\deloop B$. The latter descends to $\deloop T$ and $\mu_{T^{\sharp}}$ is its pullback to $\deloop T^{\sharp}$. This provides an identification between the restrictions of $\mu_{T^{\sharp}}$ and $\mu$ to $\deloop Z^{\sharp}$, which \emph{a priori} depends on $B$. The independence is proved as in \cite[\S5.2.6]{zhao2022metaplectic}.
\end{proof}

\begin{void}
\label{void-sharp-cocenter-induced-cover}
Note that $Z^{\sharp}/Z^{\sharp}_{\sconn}$ is canonically identified with $G^{\sharp}_{\abelian}$, so the $\mathbb E_{\infty}$-monoidal morphism \eqref{eq-sharp-center-symmetric-monoidal} together with its trivialization over $\deloop Z^{\sharp}_{\sconn}$ defines an $\mathbb E_{\infty}$-monoidal morphism
\begin{equation}
\label{eq-sharp-cocenter-induced-cover}
\mu_{G^{\sharp}_{\abelian}} : \deloop G^{\sharp}_{\abelian} \rightarrow \deloop^4A(1).
\end{equation}

The sheaf of $\mathbb E_{\infty}$-monoidal morphisms from $\deloop G^{\sharp}_{\abelian}$ to $\deloop^4A(1)$ fits into a canonical fiber sequence
\begin{equation}
\label{eq-sharp-cocenter-fiber-sequence}
\SHom_{\integers}(\pi_1G^{\sharp}, \deloop^2A) \rightarrow \SMaps_{\mathbb E_{\infty}}(\deloop G^{\sharp}_{\abelian}, \deloop^4A(1)) \rightarrow \SHom(\pi_1G^{\sharp}, A(-1)_{2\tors}),
\end{equation}
where $A(-1)_{2\tors}$ denotes the subsheaf of $2$-torsion elements of $A(-1)$. Indeed, this follows from expressing $\deloop G^{\sharp}_{\abelian}$ as the cofiber of $\deloop T_{\sconn}^{\sharp} \rightarrow \deloop T^{\sharp}$ and reducing to the analogous statement for tori (\emph{cf.}~\cite[Proposition 4.6.2]{zhao2022metaplectic}).

In particular, it follows from \emph{op.cit.}~that the image of $\mu_{G_{\abelian}^{\sharp}}$ along the second map of \eqref{eq-sharp-cocenter-fiber-sequence} is the restriction of $Q$ to $\Lambda^{\sharp}$, which is valued in $A(-1)_{2\tors}$ and annihilates $\Lambda^{\sharp}_{\sconn}$.
\end{void}

\subsection{The local Langlands correspondence}
\label{sec-local-langlands-correspondence}

\begin{void}
\label{void-local-langlands-correspondence-context}
We specialize to the case where $G$ is a reductive group $F$-scheme. Fix a finite abelian group $A$ whose order is invertible in $F$, equipped with an injective character
$$
\zeta : A \rightarrow \complexes^{\times}.
$$
Note that $\zeta$ identifies $A$ with the subgroup $\mu_N(\complexes)$ for $N := |A|$.

Let $\mu$ be a rigidified morphism $\deloop G \rightarrow \deloop^4A(1)$. We shall recall Weissman's conjectural local Langlands correspondence for the cover of $G(F)$ defined by $\mu$ and explain its extension to extended pure inner forms of $G$.
\end{void}

\begin{void}
\label{void-local-langlands-correspondence-cover-construction}
For each $\beta \in \Isoc_G$, we apply the construction functor \eqref{eq-construction-of-covers-isocrystal} to $\mu$ to obtain a cover
$$
\widetilde G_{\beta} := \int_{F, \beta} \mu.
$$

Denote by $\Pi(\widetilde G_{\beta})$ the set of isomorphism classes of irreducible $\zeta$-genuine smooth representations of $\widetilde G_{\beta}$. Being ``$\zeta$-genuine" means that $A$ acts through the character $\zeta$.

As above, we omit the subscript $\beta$ when it is the trivial $G$-isocrystal.
\end{void}

\begin{void}
\label{void-metaplectic-dual-data}
On the other hand, the rigidified morphism $\mu$ defines the reductive group $F$-scheme $G^{\sharp}$ (\emph{cf.}~\S\ref{void-sharp-lattices}) and the $\mathbb E_{\infty}$-monoidal morphism $\mu_{G_{\abelian}^{\sharp}}$ (\emph{cf.}~\S\ref{void-sharp-cocenter-induced-cover}). The Galois side of the local Langlands correspondence depends only on $(G^{\sharp}, \mu_{G_{\abelian}^{\sharp}})$, as opposed to $(G, \mu)$.

Denote by $H$ the Langlands dual of $G^{\sharp}$, viewed as a locally constant \'etale sheaf of pinned split reductive group $\integers$-schemes. In particular, $H$ is equipped with a Killing pair $T_H\subset B_H \subset H$, where $T_H$ has sheaf of characters $\Lambda^{\sharp}$.
\end{void}

\begin{void}
\label{void-sharp-cocenter-fiber-sequence-splitting}
We shall construct a canonical splitting of the fiber sequence \eqref{eq-sharp-cocenter-fiber-sequence}. The idea of this construction is originally due to Gaitsgory and Lysenko (\emph{cf.}~\cite[\S4.8]{MR3769731}).

If $A$ has odd degree, then $A(-1)_{2\tors}$ vanishes and \eqref{eq-sharp-cocenter-fiber-sequence} trivially splits.

If $A$ has even degree\footnote{By the assumption that $|A|$ is invertible in $F$, this implies that $F$ has characteristic $\neq 2$.}, then $\zeta$ identifies $A(-1)_{2\tors}$ with $\integers/2$. To split \eqref{eq-sharp-cocenter-fiber-sequence}, we associate to each character $\epsilon : \pi_1G^{\sharp} \rightarrow \integers/2$ the $\mathbb E_{\infty}$-monoidal morphism
\begin{equation}
\label{eq-sign-cover-composition-with-character}
\deloop G^{\sharp}_{\abelian} \xrightarrow{\epsilon \otimes \deloop\Psi} \deloop^2\{\pm 1\} \xrightarrow{\sgn} \deloop^4\{\pm 1\}^{\otimes 2} \rightarrow \deloop^4A(1),
\end{equation}
where $\sgn$ is the $\mathbb E_{\infty}$-monoidal morphism constructed below.
\end{void}

\begin{void}\emph{Construction of $\sgn$.}
\label{void-sign-cover}
We work over the base scheme $S := \Spec \integers[\frac{1}{2}]$. The \'etale sheaf $\SMaps_{\mathbb E_{\infty}}(\deloop^2\{\pm 1\}, \deloop^4\{\pm 1\}^{\otimes 2})$ is the fiber of the map
\begin{equation}
\label{eq-multiplicative-cover-squaring-map}
\SMaps_{\mathbb E_{\infty}}(\deloop\mathbb G_m, \deloop^4\{\pm 1\}^{\otimes 2}) \rightarrow \SMaps_{\mathbb E_{\infty}}(\deloop\mathbb G_m, \deloop^4\{\pm 1\}^{\otimes 2})
\end{equation}
given by pullback along $(\cdot)^2 : \deloop\mathbb G_m \rightarrow \deloop\mathbb G_m$.

On the other hand, the functor of taking loop spaces and applying $\SMaps_e(\mathbb G_m, \cdot)$ yields an equivalence (\emph{cf.}~\cite[Proposition 4.6.6]{zhao2022metaplectic})
\begin{equation}
\label{eq-multiplicative-cover-parametrization}
\SMaps_{\mathbb E_{\infty}}(\deloop\mathbb G_m, \deloop^4\{\pm 1\}^{\otimes 2})\xrightarrow{\simeq}\SMaps_{\mathbb E_{\infty}}(\integers, \deloop^2\{\pm 1\}).
\end{equation}
This induces an identification of the fiber of \eqref{eq-multiplicative-cover-squaring-map}
\begin{equation}
\label{eq-sign-cover-equivalence}
\SMaps_{\mathbb E_{\infty}}(\deloop^2\{\pm 1\}, \deloop^4\{\pm 1\}^{\otimes 2})\xrightarrow{\simeq} \SMaps_{\mathbb E_{\infty}}(\integers/2, \deloop^2\{\pm 1\}).
\end{equation}

Note that an $\mathbb E_{\infty}$-monoidal morphism $\integers/2 \rightarrow \deloop^2\{\pm 1\}$ is equivalent to a symmetric monoidal extension of $\integers/2$ by $\deloop\{\pm 1\}$. We define $\sgn : \deloop^2\{\pm 1\} \rightarrow \deloop^4\{\pm 1\}^{\otimes 2}$ to be the $\mathbb E_{\infty}$-monoidal morphism whose image under \eqref{eq-sign-cover-equivalence} is the trivial monoidal extension of $\integers/2$ by $\deloop\{\pm 1\}$, with commutativity constraint specified by the pairing
$$
\integers/2 \otimes \integers/2 \rightarrow \{\pm 1\},\quad a, b\mapsto (-1)^{ab}.
$$
\end{void}

\begin{void}
Under the splitting of \eqref{eq-sharp-cocenter-fiber-sequence}, we may write $\mu_{G^{\sharp}_{\abelian}}$ as a sum
\begin{equation}
\label{eq-sharp-cover-decomposition}
\mu_{G^{\sharp}_{\abelian}} \xrightarrow{\simeq} \mu_{G^{\sharp}_{\abelian}}^{(1)} + \mu_{G^{\sharp}_{\abelian}}^{(2)},
\end{equation}
where $\mu_{G^{\sharp}_{\abelian}}^{(1)}$ is the composition \eqref{eq-sign-cover-composition-with-character} applied to the character $\epsilon$ defined by the restriction of $Q$ to $\Lambda^{\sharp}$ (\emph{cf.}~\S\ref{void-sharp-cocenter-induced-cover}) and $\mu_{G^{\sharp}_{\abelian}}^{(2)}$ is defined by a $\integers$-linear morphism
\begin{equation}
\label{eq-sharp-cover-linear-component}
\pi_1G^{\sharp} \rightarrow \deloop^2 A.
\end{equation}
\end{void}

\begin{void}
\label{void-group-cochain-as-space}
We shall now convert the data $\epsilon : \pi_1G^{\sharp} \rightarrow \integers/2$ and \eqref{eq-sharp-cover-linear-component} to the Galois side. For this, it helps to introduce a bit of formalism.

Given a pro-space $X = \lim_{i\in I} X_i$ and a sheaf of abelian groups $\mathscr A$ over some $X_i$, we write
$$
\Gamma(X, \mathscr A[n]) := \colim_{j\in I_{/i}} \Gamma(X_j, \mathscr A[n]),
$$
where the transition maps are given by pullbacks. We refer to objects of the $\integers$-linear space underlying $\Gamma(X, \mathscr A[2])$ as \emph{$\mathscr A$-gerbes} over $X$. Thus, the total space of an $\mathscr A$-gerbe over $X$ is a pro-space over $X$.

Givan pro-group $\Sigma = \lim_{i\in I}\Sigma_i$, we may apply the above formalism to the pro-space $*/\Sigma := \lim_{i\in I} */\Sigma_i$. Any $\Sigma_i$-module $\mathscr A$ may be regarded as a sheaf of abelian groups over $*/\Sigma_i$, and we use $\cycle^n(\Sigma, \mathscr A)$ to denote the $\integers$-linear space underlying $\Gamma(*/\Sigma, \mathscr A[n])$.  In particular, we have an isomorphism whenever $0\le m\le n$:
$$
\pi_m \cycle^n(\Sigma, \mathscr A) \xrightarrow{\simeq} H^{n - m}(\Sigma, \mathscr A),
$$
the right-hand-side being the continuous group cohomology of $\Sigma$ with coefficients in $\mathscr A$.
\end{void}

\begin{void}
\label{void-weil-group-classifying-space}
Fix an algebraic closure $\bar F$ of $F$ lifting $\bar{\residue}$.

Denote by $W_F$ the Weil group of $F$, which we view as a pro-group $W_F := \lim \Sigma$, where the formal limit is taken over discrete quotients $W_F \twoheadrightarrow \Sigma$.

By taking fibers at the geometric point $\Spec\bar F$, we may view the \'etale sheaf $H(\complexes)$ as a group with a $W_F$-action through a finite quotient $\Sigma$. In particular, $H(\complexes)$ may be viewed as a sheaf of groups over $*/\Sigma$.

Analogously, $Z_H(\complexes)$ may be viewed as a sheaf of abelian groups over $*/\Sigma$. The formalism of \S\ref{void-group-cochain-as-space} allows us to make sense of $Z_H(\complexes)$-gerbes over $*/W_F$.
\end{void}

\begin{void}\emph{The meta-Weil group.}
Consider the central extension
\begin{equation}
\label{eq-hilbert-cover}
1 \rightarrow \{\pm 1\} \rightarrow \widetilde F^{\times}_{\Hilbert} \rightarrow F^{\times} \rightarrow 1
\end{equation}
defined by the quadratic Hilbert symbol $\{\cdot, \cdot\}$ as cocycle, \emph{i.e.}~we have $\widetilde F^{\times}_{\Hilbert} := F^{\times} \times \{\pm 1\}$ with the group structure $(a, 1)\cdot (b, 1) := (ab, \{a, b\})$.

The \emph{meta-Weil group} is defined to be the pullback of \eqref{eq-hilbert-cover} along the Artin reciprocity map $W_F \rightarrow F^{\times}$ (\emph{cf.}~\cite[\S4]{MR3802418})
\begin{equation}
\label{eq-meta-weil-group}
1 \rightarrow \{\pm 1\} \rightarrow \widetilde W_F \rightarrow W_F \rightarrow 1.
\end{equation}

Taking classifying spaces, \eqref{eq-meta-weil-group} yields a $\{\pm 1\}$-gerbe over $*/W_F$.
\end{void}

\begin{void}
\label{void-metaplectic-dual-etale-gerbes}
Denote by $\widetilde Z_H^{(1)}$ the $Z_H(\complexes)$-gerbe over $*/W_F$ induced from \eqref{eq-meta-weil-group} along the dual $\epsilon^{\vee} : \{\pm 1\} \rightarrow Z_H(\complexes)$ of the character $\epsilon$.

Denote by $\widetilde Z_H^{(2)}$ the $Z_H(\complexes)$-gerbe over $*/W_F$ defined by composing \eqref{eq-sharp-cover-linear-component} with $\zeta$. Here, we invoked the passage from \'etale $Z_H(\complexes)$-gerbes over $\Spec F$ to $Z_H(\complexes)$-gerbes over $*/W_F$.

Consider the sum of $Z_H(\complexes)$-gerbes
\begin{equation}
\label{eq-metaplectic-ell-group-center}
\widetilde Z_H := \widetilde Z_H^{(1)} + \widetilde Z_H^{(2)}.
\end{equation}
\end{void}

\begin{void}
Inducing (the total space of) $\widetilde Z_H$ along the morphism $Z_H(\complexes) \rightarrow H(\complexes)$ of sheaves of groups over $*/W_F$, we obtain a pro-space $\widetilde H$ over $*/W_F$.

By an \emph{L-parameter}, we shall mean a section of the projection $\widetilde H \rightarrow */W_F$.

For any standard parabolic subgroup $P_H \subset H$ with standard Levi subgroup $M_H \subset P_H$, one may induce $\widetilde Z_H$ along $Z_H(\complexes) \rightarrow M_H(\complexes) \rightarrow P_H(\complexes)$ to obtain pro-spaces $\widetilde M_H$ and $\widetilde P_H$ over $*/W_F$. An L-parameter $*/W_F \rightarrow \widetilde H$ is called \emph{semisimple} if, whenever it factors through $\widetilde P_H$ for some standard parabolic subgroup $P_H$, it factors through $\widetilde M_H$.

Denote by $\Phi(\widetilde H)$ the set of isomorphism classes of semisimple L-parameters. We shall often refer to elements of $\Phi(\widetilde H)$ simply as ``L-parameters".
\end{void}

\begin{rem}
Let us remark on why we define L-parameters in terms of $\widetilde H$ rather than the more concrete definition in terms of an ``$L$-group". The discrepancy has to do with the choice of base points.

Indeed, choosing a base point of $\widetilde Z_H$ and taking loop spaces, we obtain an extension of pro-groups
$$
1 \rightarrow Z_H(\complexes) \rightarrow \Omega(\widetilde Z_H) \rightarrow W_F \rightarrow 1.
$$

Likewise, the induced base point of $\widetilde H$ gives rise to an extension $\Omega(\widetilde H)$ of $W_F$ by $H(\complexes)$, which may be considered as the ``$L$-group".

An L-parameter $*/W_F \rightarrow \widetilde H$ is (non-canonically) isomorphic to one which preserves the base point, which is equivalent to a section $W_F \rightarrow \Omega(\widetilde H)$. Note that $\Omega(\widetilde H)$ is the pullback of some extension $\Omega(\widetilde H)_f$ of a discrete quotient of $W_F$ by $H(\complexes)$. The composite 
$$
W_F \rightarrow \Omega(\widetilde H) \twoheadrightarrow \Omega(\widetilde H)_f
$$
is a morphism of pro-groups, so it factors through a discrete quotient of $W_F$. Thus, we recover the classical notion of an L-parameter (or more precisely, a Weil parameter).\footnote{Weissman provides a different recipe for restoring the independence of base points, by explicitly lifting the ``category of L-groups" to a $2$-category (\emph{cf.}~\cite[\S5.1]{MR3802418}).}
\end{rem}

\begin{void}
The following is Weissman's version of the local Langlands correspondence (\emph{cf.}~\cite[Conjecture 0.1]{MR3802418}).
\end{void}

\begin{conj}[Weissman]
\label{conj-local-langlands-correspondence}
There is a natural finite-to-one map
\begin{equation}
\label{eq-local-langlands-correspondence}
\LLC : \Pi(\widetilde G) \rightarrow \Phi(\widetilde H).
\end{equation}
\end{conj}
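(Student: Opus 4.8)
As Conjecture \ref{conj-local-langlands-correspondence} is not established here in general, I describe instead the strategy by which I expect it to be approached; this strategy is carried out completely only when $G$ is a torus (Proposition \ref{prop-local-langlands-correspondence-tori}). The plan is to construct $\LLC$ by reduction to two situations that are, or will be, understood. The first is the torus case: when $G$ is a torus one combines Theorem \ref{thmx-duality-sharp-torus}, applied to the symmetric monoidal morphism $\mu_{G^{\sharp}_{\abelian}}$ of \S\ref{void-sharp-cocenter-induced-cover}, with a direct comparison between genuine smooth characters of $\widetilde G$ and of the sharp cover $\widetilde{G^{\sharp}}$; this yields a finite-to-one — but in general non-surjective — map $\Pi(\widetilde G)\to\Phi(\widetilde H)$. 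The second is the ``as commutative as possible'' case: when the strictly Weyl-invariant quadratic form $Q$ attached to $\mu$ by \eqref{eq-etale-metaplectic-cover-fiber-sequence} vanishes, one has $G^{\sharp}=G$, the decomposition \eqref{eq-sharp-cover-decomposition} collapses to its $\integers$-linear summand, and $\widetilde Z_H=\widetilde Z_H^{(2)}$; here Kaletha's results identify $\LLC$ with the ordinary local Langlands correspondence for the linear group $G$ twisted by the $Z_H(\complexes)$-gerbe $\widetilde Z_H^{(2)}$.

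For general $\mu$ I would proceed as follows. First, use the fiber sequence \eqref{eq-etale-metaplectic-cover-fiber-sequence} and the splitting behind \eqref{eq-sharp-cover-decomposition} to separate the ``$2$-torsion'' data of $\mu$ — recorded on the Galois side by the meta-Weil gerbe $\widetilde Z_H^{(1)}$ built from \eqref{eq-meta-weil-group} — from the $\integers$-linear data recorded by $\widetilde Z_H^{(2)}$. Second, transfer irreducible genuine smooth representations of $\widetilde G$ to smooth representations of an associated linear reductive group, morally $G^{\sharp}$ accessed through a metaplectic dual group, compatibly with parabolic induction and the Langlands classification, so that it suffices to treat tempered and then discrete parameters. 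Third, apply the established local Langlands correspondence for that linear group. Fourth, twist the resulting parameter by $\widetilde Z_H^{(1)}$ and $\widetilde Z_H^{(2)}$ to land in $\Phi(\widetilde H)$, and check compatibility with central core characters through the diagram \eqref{eq-intro-compatibility-central-character}, using Theorem \ref{thmx-duality-sharp-torus} on the cocenter (\emph{cf.}~\S\ref{sec-duality-for-the-cocenter}); finiteness of fibers is then inherited from the linear case.

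The main obstacle is the second step: the passage from $(G,\mu)$ to $(G^{\sharp},\mu_{G^{\sharp}_{\abelian}})$ has no purely formal description at the level of representations, and it is precisely here that the phenomenon behind the ``missing'' L-packets lives. Concretely, the central core character of an irreducible genuine representation of $\widetilde G$ is forced to be trivial on the kernel $K$ of \eqref{eq-intro-induced-map-center-cover}, whereas the abelianization of an L-parameter need not be — this is Weissman's obstruction $\Omega(\sigma)$, generalized to $\Omega_{\beta}(\sigma)$ in Theorem \ref{thmx-kottwitz-invariant}. Hence a proof in general cannot be formal: it would require genuine representation-theoretic input — a theory of types for covers, or a comparison of (stable, twisted) trace formulae — that lies beyond the scope of this paper. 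For tori, by contrast, Theorem \ref{thmx-duality-sharp-torus} supplies the entire comparison via class field theory, which is why the conjecture becomes a theorem in that case.
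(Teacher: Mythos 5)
The statement you were asked about is a conjecture: the paper does not prove it, but imports it from Weissman (\emph{cf.}~\cite[Conjecture 0.1]{MR3802418}) and merely extends its formulation to the covers $\widetilde G_{\beta}$ (Conjecture \ref{conj-enhanced-local-langlands-correspondence}). Your decision not to manufacture a proof is therefore the right call, and there is no argument in the paper against which to compare your sketch; what the paper actually supplies are the ingredients you name — the duality for sharp covers of tori (Theorem \ref{thm-torus-commutative-cover-duality}), the compatibility with central core characters, and the obstruction $\Omega_{\beta}(\sigma)$ computed in Theorem \ref{thmx-kottwitz-invariant}.

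One small correction to your torus discussion: for a torus $T$ with a general rigidified (not necessarily $\mathbb E_{\infty}$-monoidal) $\mu$, the cover $\widetilde T$ need not be commutative, so $\Pi(\widetilde T)$ consists of irreducible genuine smooth representations rather than characters. Accordingly, the paper does not make a ``direct comparison of genuine characters'' between $\widetilde T$ and $\widetilde T^{\sharp}$; it defines $\LLC_{\beta}$ by composing the central core character map $\Pi(\widetilde T_{\beta}) \rightarrow \Pi(\widetilde T^{\sharp})$ with the correspondence for $(T^{\sharp}, \mu_{T^{\sharp}})$, and nonemptiness and finiteness of the fibers are obtained from the Stone--von Neumann theorem (Proposition \ref{prop-local-langlands-correspondence-tori}), not inherited from a character identification. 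With that adjustment, your account of which cases are established and where the genuine difficulty lies — the passage from $(G,\mu)$ to $(G^{\sharp},\mu_{G^{\sharp}_{\abelian}})$ and the resulting failure of surjectivity detected by $\Omega(\sigma)$ — matches the paper's perspective.
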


\begin{void}
Let us include the covers $\widetilde G_{\beta}$ in the formulation of Conjecture \ref{conj-local-langlands-correspondence}.
\end{void}

\begin{conj}
\label{conj-enhanced-local-langlands-correspondence}
For each $\beta \in \Basic_G$, there is a natural finite-to-one map
\begin{equation}
\label{eq-local-langlands-correspondence-isocrystal}
\LLC_{\beta} : \Pi(\widetilde G_{\beta}) \rightarrow \Phi(\widetilde H).
\end{equation}
\end{conj}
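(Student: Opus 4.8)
\emph{Proof strategy for Conjecture \ref{conj-enhanced-local-langlands-correspondence}.}
The conjecture in full generality presupposes the enhanced local Langlands correspondence for linear reductive groups, in the form in which all extended pure inner forms are treated together, following Kottwitz and Kaletha. What one can actually hope to \emph{prove} is the compatibility of the cover-theoretic picture with that input and, crucially, the identification of the obstruction to nonemptiness of the fibers of $\LLC_{\beta}$. This is exactly what the body of the paper carries out for tori, and it is the template I would follow.

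First I would treat the case of an $F$-torus $T$. Here $T_{\beta} \cong T$ as group $F$-schemes, but the cover $\widetilde T_{\beta} = \int_{F,\beta}\mu$ is a twist of $\widetilde T$ by $\Translation_{\beta}$ (\emph{cf.}~\eqref{eq-isocrystal-translation-functor}). Using the canonical quadratic structure of $\mu$ (Proposition \ref{prop-canonical-quadratic-structure}), one computes the difference $\Translation_{\beta}\mu - \mu$ explicitly as a genuine character of $\widetilde T$; it follows that $\Pi(\widetilde T_{\beta})$, whenever nonempty, is a torsor under $\Pi(\widetilde T) \xrightarrow{\simeq} \Phi(\widetilde H)$ (Theorem \ref{thmx-duality-sharp-torus}). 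One then matches the condition for nonemptiness with the vanishing of the central-core-character obstruction $\Omega_{\beta}(\sigma)$ via Theorem \ref{thmx-kottwitz-invariant}, pins down the resulting bijection with $\Phi(\widetilde H)$ by applying Theorem \ref{thmx-duality-sharp-torus} to $T^{\sharp}$ and to its center, and obtains naturality in $\beta$ from the functoriality of $\int_{F,\beta}$ in the input morphism.

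For general reductive $G$, the Galois side depends only on $(G^{\sharp}, \mu_{G^{\sharp}_{\abelian}})$ (\emph{cf.}~\S\ref{void-metaplectic-dual-data}). I would first construct, extending Kaletha's work (\emph{cf.}~\S\ref{sec-duality-for-the-cocenter}), a presentation of $\Phi(\widetilde H)$ over the dual data of the linear group $G^{\sharp}$, which packages an L-parameter $\sigma$ into a genuine datum on $G^{\sharp}$ together with its abelianization $\chi_{\sigma}$. Granting the enhanced correspondence for $G^{\sharp}$ and its extended pure inner forms, one transports packets along the sharp-cover comparison: a $\zeta$-genuine irreducible $V \in \Pi(\widetilde G_{\beta})$ has a central core character, whose image under $\LLC$ is already determined by the torus case applied to $\widetilde Z^{\sharp}$, and the remaining data is matched with representations of the appropriate inner form of $G^{\sharp}$. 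By part (2) of Theorem \ref{thmx-kottwitz-invariant}, $\Omega_{\beta}(\sigma)$ --- computable from the Kottwitz invariant of $\beta$ --- detects precisely whether the central core character demanded by $\sigma$ can be realized on $\widetilde G_{\beta}$, so the fiber of $\LLC_{\beta}$ at $\sigma$ is nonempty exactly when $\Omega_{\beta}(\sigma)$ vanishes; part (1) guarantees that such a $\beta$ always exists.

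The main obstacle is that the passage $(G,\mu) \mapsto (G^{\sharp}, \mu_{G^{\sharp}})$ is not induced by any group homomorphism between $G$ and $G^{\sharp}$ --- only their centers are related, through \eqref{eq-intro-induced-map-center-cover}. Hence one cannot pull representations of $\widetilde G_{\beta}$ back to data on an inner form of $G^{\sharp}$ by naive functoriality; this is where Weissman's sharp-cover technology is indispensable, and it is this step that is responsible for the ``missing'' L-packets: a packet realized on the cover of some suitable $G^{\sharp}_{\beta'}$ need not appear on $\widetilde G_{\beta}$ for the given $\beta$. Verifying that the resulting $\LLC_{\beta}$ is genuinely natural --- compatible with parabolic induction, with the central-core-character square \eqref{eq-intro-compatibility-central-character}, and functorial in $\beta$ --- is where the bulk of the effort lies and, in the reductive case, remains contingent on the still-conjectural correspondence for linear reductive groups.
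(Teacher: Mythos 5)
The statement you were asked about is a \emph{conjecture}, and the paper does not prove it: what it actually establishes is (i) the construction of the covers $\widetilde G_{\beta}$ and of the obstruction $\Omega_{\beta}(\sigma)$, together with its computation via the Kottwitz invariant (Theorem \ref{thmx-kottwitz-invariant}), and (ii) the torus case, where $\LLC_{\beta}$ is constructed unconditionally and its fibers are described (\S\ref{sec-example-tori}, Proposition \ref{prop-local-langlands-correspondence-tori}). Your proposal correctly recognizes this division of labor, and your overall skeleton---define $\LLC_{\beta}$ through the central core character valued in $\widetilde Z^{\sharp}$, use the sharp-cover duality on the abelianized side, and detect nonemptiness of fibers by $\Omega_{\beta}(\sigma)$ computed from $\Kottwitz(\beta)$---is exactly the paper's.

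Where your sketch goes wrong is in the torus case, which is the only part the paper actually proves. For a general rigidified $\mu$ the cover $\widetilde T_{\beta}$ of $T(F)$ is \emph{not} commutative (Theorem \ref{thmx-duality-sharp-torus} applies only to $\mathbb E_{\infty}$-monoidal, i.e.\ sharp, covers), so $\Pi(\widetilde T)$ is not a group, $\Pi(\widetilde T_{\beta})$ is not a torsor under $\Pi(\widetilde T)\xrightarrow{\simeq}\Phi(\widetilde H)$, and $\LLC_{\beta}$ is genuinely finite-to-one rather than bijective. Likewise the difference $\Translation_{\beta}\mu-\mu$ is not a genuine character of $\widetilde T$: by Proposition \ref{prop-center-isocrystal-cover-identification} it is the cover $\int_F(b_2\otimes \deloop\Psi^{\otimes 2})(\cdot,\beta)$, and a character of a finite group appears only after restricting to $K\subset\widetilde Z^{\sharp}$, which is precisely the content of Theorem \ref{thm-inclusion-difference-kottwitz-invariant}. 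The paper's actual route is: define $\LLC_{\beta}$ as restriction along the central map $\widetilde T^{\sharp}\rightarrow\widetilde T_{\beta}$ (Lemma \ref{lem-sharp-center-isocrystal}) followed by the duality for $(T^{\sharp},\mu_{T^{\sharp}})$, and then control the fibers by Stone--von Neumann: $\Pi(\widetilde T_{\beta})$ is in bijection with genuine characters of the center $\widetilde C_{\beta}$, and the fiber at $\sigma$ is the finite, nonempty set of extensions of $\bar\chi_{\sigma}$ from $\widetilde T^{\sharp}/K$ to $\widetilde C_{\beta}$, available exactly when $\chi_{\sigma}$ annihilates $K$, i.e.\ when $\Omega_{\beta}(\sigma)=1$. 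For general reductive $G$ your outline is, as you say, contingent on the linear correspondence and on compatibility with central core characters, and there only the emptiness direction (Lemma \ref{lem-weissman-obstruction-isocrystal}) is proved in the paper; so the general statement remains conjectural, as it should.
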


\medskip

\section{Sharp covers}
\label{sec-sharp-covers}

Let $F$ be a local field with a fixed algebraic closure $\bar F$.

The goal of this subsection is to construct the local Langlands correspondence for sharp tori (\emph{cf.}~\eqref{eq-torus-commutative-cover-map-on-trivializations}). This is a consequence of Theorem \ref{thm-torus-commutative-cover-duality} whose proof occupies \S\ref{sec-sign-cover} and \S\ref{sec-torus-linear-cover}.

In \S\ref{sec-duality-for-center}, we will use this result to establish the Langlands correspondence for the ``sharp center". This will be needed for the formulation of compatibility with central core character (\emph{cf.}~\S\ref{sec-weissman-obstruction}). In \S\ref{sec-duality-for-the-cocenter}, we explain another consequence of Theorem \ref{thm-torus-commutative-cover-duality} which will not be needed later. Its purpose is to justify why the local Langlands correspondence for sharp covers is not far from the local Langlands correspondence for linear reductive groups.

\subsection{Duality for tori}
\label{sec-sharp-tori-duality}

\begin{void}
\label{void-class-field-theory-functor}
Given topological abelian groups $A_1$, $A_2$, we write $\extension^1(A_1, A_2)$ for the groupoid of commutative extensions of $A_1$ by $A_2$. We endow $\complexes^{\times}$ with the discrete topology.

For any $F$-torus $T$, we shall construct a $\integers$-linear functor (natural in $T$)
\begin{equation}
\label{eq-class-field-theory-functor}
\Langlands_T : \cycle^2(W_F, \check T(\complexes)) \rightarrow \extension^1(T(F), \complexes^{\times}),
\end{equation}
where $\check T$ stands for the Langlands dual of $T$ and the left-hand-side is defined in \S\ref{void-group-cochain-as-space}.
\end{void}

\begin{void}\emph{Construction of \eqref{eq-class-field-theory-functor}.}
Since the group $H^2(W_F, \check T(\complexes))$ vanishes (\emph{cf.}~\cite[Theorem 3.2.2]{MR3003999}), the space $\cycle^2(W_F, \check T(\complexes))$ is connected and thus identified with the classifying space of $\cycle^1(W_F, \check T(\complexes))$.

The automorphism group of the zero object in $\extension^1(T(F), \complexes^{\times})$ is the group $\Hom(T(F), \complexes^{\times})$ of continuous characters. To define \eqref{eq-class-field-theory-functor}, it suffices to define a $\integers$-linear functor
\begin{equation}
\label{eq-class-field-theory-functor-loop}
\cycle^1(W_F, \check T(\complexes)) \rightarrow \Hom(T(F), \complexes^{\times}).
\end{equation}

The functor \eqref{eq-class-field-theory-functor-loop} is set to be the projection $\cycle^1(W_F, \check T(\complexes)) \rightarrow H^1(W_F, \check T(\complexes))$, followed by Langlands duality for tori (\emph{cf.}~\cite[\S7.5]{MR2508725})
\begin{equation}
\label{eq-langlands-duality-for-tori}
H^1(W_F, \check T(\complexes)) \xrightarrow{\simeq} \Hom(T(F), \complexes^{\times}).
\end{equation}
\end{void}

\begin{rem}
\label{rem-class-field-theory-functor-loop}
By construction, $\pi_1\Langlands_T$ is the Langlands duality \eqref{eq-langlands-duality-for-tori} for $T$.
\end{rem}

\begin{rem}
There is some asymmetry in the way we defined $\Langlands_T$: The left-hand-side is a $2$-groupoid, while the right-hand-side is a $1$-groupoid. This is due to similar asymmetry in the usual formulation of Langlands duality for tori \eqref{eq-langlands-duality-for-tori}. A better formulation would be an equivalence of groupoids
\begin{equation}
\label{eq-langlands-duality-for-tori-categorical}
\cycle^1(W_F, \check T(\complexes)) \xrightarrow{\simeq} \Hom(\Isoc_T, */\complexes^{\times}),
\end{equation}
where $\Isoc_T$ is understood as a pro-Picard groupoid. The equivalence \eqref{eq-langlands-duality-for-tori-categorical} ought to recover \eqref{eq-langlands-duality-for-tori} on $\pi_0$ and the Pontryagin duality between $\check T(\complexes)^{\Gal_F}$ and $(\pi_1T)_{\Gal_F}$ on $\pi_1$.

We will not adopt this point of view in the present article, since the benefits it brings are not visible at the level of our results.
\end{rem}

\begin{void}
\label{void-torus-commutative-cover}
We now let $A$ be a finite abelian group of order invertible in $F$, equipped with an injective character $\zeta : A \rightarrow \complexes^{\times}$.

Let $T$ be an $F$-torus endowed with an $\mathbb E_{\infty}$-monoidal morphism $\mu : \deloop T \rightarrow \deloop^4A(1)$. Applying the construction functor \eqref{eq-construction-of-covers} to $\mu$, we obtain a cover $\widetilde T$ of $T(F)$. It is commutative since $\mu$ is $\mathbb E_{\infty}$-monoidal. Inducing along $\zeta$, we obtain a commutative extension
\begin{equation}
\label{eq-torus-commutative-cover}
1 \rightarrow \complexes^{\times} \rightarrow \widetilde T_{\zeta} \rightarrow T(F) \rightarrow 1.
\end{equation}
We shall view $\widetilde T_{\zeta}$ as an object of $\extension^1(T(F), \complexes^{\times})$.

We shall now apply the construction of the dual datum \eqref{eq-metaplectic-ell-group-center} to $(T, \mu)$. In the present context, we have $Z_H \cong H \cong \check T$. Thus \eqref{eq-metaplectic-ell-group-center} is a $\check T(\complexes)$-gerbe $\widetilde{\check T}$ over $*/W_F$, which we view as an object of $\cycle^2(W_F, \check T(\complexes))$.
\end{void}

\begin{thm}
\label{thm-torus-commutative-cover-duality}
There is a canonical isomorphism in $\extension^1(T(F), \complexes^{\times})$ functorial in $(T, \mu)$:
\begin{equation}
\label{eq-torus-commutative-cover-duality}
	\Langlands_T(\widetilde{\check T}) \xrightarrow{\simeq} \widetilde T_{\zeta}.
\end{equation}
\end{thm}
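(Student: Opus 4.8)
The strategy is to decompose both sides of \eqref{eq-torus-commutative-cover-duality} along the canonical splitting \eqref{eq-sharp-cover-decomposition}, reducing to two separate statements: a $2$-torsion (``sign'') part and a $\integers$-linear part. Since $\mu$ is $\mathbb E_\infty$-monoidal on $\deloop T$ and $T$ is a torus, we have $T^\sharp_{\sconn}$ trivial, so $\mu = \mu_{T^\sharp_{\abelian}}$ itself, and the fiber sequence \eqref{eq-sharp-cocenter-fiber-sequence} (for a torus, \cite[Proposition 4.6.2]{zhao2022metaplectic}) gives a canonical decomposition $\mu \simeq \mu^{(1)} + \mu^{(2)}$, where $\mu^{(1)}$ is pulled back via $\epsilon\otimes\deloop\Psi$ and $\sgn$ from a character $\epsilon : \pi_1 T \to \integers/2$, and $\mu^{(2)}$ comes from a $\integers$-linear morphism $\pi_1 T \to \deloop^2 A$. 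Because both $\int_F$ and $\Langlands_T$ are $\integers$-linear functors (and the passages $\mu\mapsto\widetilde T_\zeta$, $\mu\mapsto\widetilde{\check T}$ respect Baer sums), it suffices to produce a functorial isomorphism $\Langlands_T(\widetilde{\check T}^{(i)})\xrightarrow{\simeq}\widetilde T_\zeta^{(i)}$ for $i=1,2$ separately.

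For the $\integers$-linear part $i=2$, the morphism $\mu^{(2)}$ is literally a map of complexes $\pi_1 T \to A[2]$ over $\Spec F$; tensoring with $\deloop\Psi$ and evaluating on $F$-points is, up to Kummer theory, exactly the same recipe that, on the dual side, produces $\widetilde{\check T}^{(2)}$ by composing $\pi_1 T\to \deloop^2 A$ with $\zeta$ and reinterpreting the resulting \'etale $\check T(\complexes)$-gerbe as a Galois cocycle. The content here is that $\Langlands_T$, which is built from classical Langlands duality for tori \eqref{eq-langlands-duality-for-tori} and the Artin reciprocity map, intertwines these two constructions — this is essentially a compatibility of the Kummer coboundary with Artin reciprocity, i.e.\ local class field theory phrased cohomologically. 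I expect this to go through by unwinding definitions and invoking \cite[\S7.5]{MR2508725}; the only subtlety is bookkeeping the Tate twist $A(1)$ versus $\zeta$.

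The genuinely hard part is the $2$-torsion component $i=1$. Here one must show that the commutative extension of $T(F)$ obtained by integrating $\sgn\circ(\epsilon\otimes\deloop\Psi)$ matches, under $\Langlands_T$, the $\check T(\complexes)$-gerbe induced along $\epsilon^\vee : \{\pm 1\}\to Z_H(\complexes)$ from the meta-Weil group $\widetilde W_F$ of \eqref{eq-meta-weil-group}. After pushing $\epsilon$ through, this reduces to the universal case $\pi_1 T = \integers$, $A = \{\pm 1\}$: one must identify the cover of $F^\times$ produced by $\int_F(\sgn\circ\deloop\Psi)$ with the Hilbert-symbol extension $\widetilde F^\times_{\Hilbert}$ of \eqref{eq-hilbert-cover}, compatibly with Artin reciprocity. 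This is precisely the bridge between Gaitsgory--Lysenko's sign gerbe and Weissman's meta-Galois group, flagged as Corollary \ref{cor-sign-cover-dual-identification} and as the main novelty of the proof; it presumably requires an explicit cocycle computation of $\int_F(\sgn\circ\deloop\Psi)$ via the Kummer coboundary on $\mathbb G_m$ and the known formula for the quadratic Hilbert symbol in terms of cup products in Galois cohomology (the Hilbert symbol is the cup product $F^\times\otimes F^\times \to H^2(F,\mu_2)\cong\{\pm 1\}$). I would carry out this identification first in \S\ref{sec-sign-cover}, deduce the general $2$-torsion case by functoriality in $\epsilon$, then handle $i=2$ in \S\ref{sec-torus-linear-cover}, and finally assemble the two halves using $\integers$-linearity of all functors involved, checking that the splitting \eqref{eq-sharp-cover-decomposition} on the automorphic side matches the decomposition \eqref{eq-metaplectic-ell-group-center} on the Galois side — the latter matching being the precise sense in which ``\eqref{eq-intro-commutative-cover-canonical-deomposition} and \eqref{eq-intro-ell-group-center} match under Langlands duality''.
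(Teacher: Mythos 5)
Your proposal follows the paper's proof essentially verbatim: the paper likewise splits $\mu \cong \mu^{(1)} + \mu^{(2)}$ via \eqref{eq-sharp-cover-decomposition}, handles the sign summand by identifying $\int_F(\sgn\circ\deloop\Psi)$ with the Hilbert cover (Proposition \ref{prop-sign-cover-identification}, Corollary \ref{cor-sign-cover-dual-identification}) and the $\integers$-linear summand by the Kummer/Artin-reciprocity compatibility \eqref{eq-artin-symbol-via-kummer} (via a devissage split $\rightarrow$ induced $\rightarrow$ general tori), and then adds the two isomorphisms using the $\integers$-linearity of $\Langlands_T$. The points you leave implicit --- that devissage for non-split tori and the explicit matching of the $2$-torsion structure over squaring (the $\binom{a}{2}$-splitting and the $\{a,a\}$-twist) --- are exactly the details the paper supplies, not a different route.
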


\begin{void}
The construction of \eqref{eq-torus-commutative-cover-duality} requires some effort and will be completed in \S\ref{sec-torus-linear-cover}.

The idea is as follows: The decomposition \eqref{eq-sharp-cover-decomposition} exhibits $\mu$ as the sum of a ``sign component'' $\mu^{(1)}$ and a $\integers$-linear component $\mu^{(2)}$. The resulting cover of $\widetilde T_{\zeta}$ is thus a Baer sum of two covers. Correspondingly, $\widetilde{\check T}$ is also the sum of two $\check T(\complexes)$-gerbes. We will construct the isomorphism \eqref{eq-torus-commutative-cover-duality} for these two summands separately and obtain the general case by adding them up, using the $\integers$-linearity of $\Langlands_T$.
\end{void}

\begin{void}
\label{void-torus-commutative-cover-local-langlands-correspondence}
Theorem \ref{thm-torus-commutative-cover-duality} yields the local Langlands correspondence \eqref{eq-local-langlands-correspondence} for $(T, \mu)$.

Indeed, the functor $\Langlands_T$ carries trivializations of $\widetilde{\check T}$ to trivializations of $\widetilde T_{\zeta}$. The latter are in bijection with the set $\Pi(\widetilde T)$ of $\zeta$-genuine characters of $\widetilde T$.

Furthermore, this map intertwines the $\cycle^1(W_F, \check T(\complexes))$-action on trivializations of $\widetilde{\check T}$ with the $\Hom(T(F), \complexes^{\times})$-action on trivializations of $\widetilde T_{\zeta}$, via the map \eqref{eq-class-field-theory-functor-loop}. Since \eqref{eq-class-field-theory-functor-loop} induces an isomorphism on $\pi_0$, $\Langlands_T$ induces an isomorphism
\begin{equation}
\label{eq-torus-commutative-cover-map-on-trivializations}
\Phi(\widetilde{\check T}) \xrightarrow{\simeq} \Pi(\widetilde T).
\end{equation}

The local Langlands correspondence for $(T, \mu)$ is define to be the inverse of \eqref{eq-torus-commutative-cover-map-on-trivializations}.
\end{void}

\subsection{The sign component}
\label{sec-sign-cover}

\begin{void}
In this subsection, we assume $\characteristic F\neq 2$. Our goal is to construct \eqref{eq-torus-commutative-cover-duality} when $\mu = \mu^{(1)}$, \emph{i.e.}~when it arises from the $\mathbb E_{\infty}$-monoidal morphism $\sgn : \deloop^2\{\pm 1\} \rightarrow \deloop^4\{\pm 1\}^{\otimes 2}$ (\emph{cf.}~\S\ref{void-sign-cover}) by pre-composing with $\epsilon\otimes\Psi$ and post-composing with the inclusion of $\{\pm 1\}$ in $A$.

Let us begin by treating the ``universal" case, where $\epsilon$ is the identity on $\integers/2$.
\end{void}

\begin{void}
Viewing $\sgn$ as a section of the fiber of \eqref{eq-multiplicative-cover-squaring-map} and applying the construction functor \eqref{eq-construction-of-covers} for $G := \mathbb G_m$, we obtain a cover of $F^{\times}$ whose pullback along $(\cdot)^2 : F^{\times} \rightarrow F^{\times}$ is endowed with a splitting.

These data can be packaged by a diagram of topological groups
\begin{equation}
\label{eq-sign-cover}
\begin{tikzcd}[column sep = 1.5em]
	& & & F^{\times} \ar[d, "(\cdot)^2"]\ar[dl, swap, "\tau"] \\
	1 \ar[r] & \{\pm 1\} \ar[r] & \widetilde F^{\times}_{\sgn} \ar[r] & F^{\times} \ar[r] & 1
\end{tikzcd}
\end{equation}
where the lower row is a double cover of $F^{\times}$.

Our main result of this subsection is the explicit identification of \eqref{eq-sign-cover}. The answer involves the cover \eqref{eq-hilbert-cover} defined by the quadratic Hilbert symbol.
\end{void}

\begin{prop}
\label{prop-sign-cover-identification}
There is a canonical isomorphism of covers
\begin{equation}
\label{eq-sign-cover-isomorphism-hilbert-cover}
\widetilde F^{\times}_{\sgn} \xrightarrow{\simeq} \widetilde F^{\times}_{\Hilbert}
\end{equation}
such that $\tau$ corresponds, under the natural bijection $\widetilde F^{\times}_{\Hilbert} \cong F^{\times} \times \{\pm 1\}$, to the map
$$
F^{\times} \rightarrow F^{\times} \times\{\pm 1\},\quad a\mapsto (a^2, \{a, a\}).
$$
\end{prop}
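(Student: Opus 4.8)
The strategy is to make the construction functor $\int_F$ of \cite[\S2.1]{zhao2022metaplectic} explicit on the $\mathbb E_{\infty}$-monoidal morphism $\nu : \deloop\mathbb G_m \rightarrow \deloop^4\{\pm1\}^{\otimes2}$ obtained by composing $\sgn$ with the cofiber morphism $k : \deloop\mathbb G_m \rightarrow \deloop^2\{\pm1\}$ (equivalently, the reduction of $\deloop\Psi$ along $\deloop^2\hat{\integers}(1)\rightarrow\deloop^2\integers/2(1)=\deloop^2\{\pm1\}$), and then to recognise the outcome via the classical identification of the quadratic Hilbert symbol with a cup product of Kummer classes. Here $\sgn$, viewed as a section of the fiber of the squaring map \eqref{eq-multiplicative-cover-squaring-map}, is literally $\nu$ together with the null-homotopy of $\nu\circ(\cdot)^2 = \sgn\circ(k\circ(\cdot)^2)$ coming from the cofiber sequence $\deloop\mathbb G_m\xrightarrow{(\cdot)^2}\deloop\mathbb G_m\xrightarrow{k}\deloop^2\{\pm1\}$; applying $\int_F$ to $\nu$ gives $\widetilde F^{\times}_{\sgn}$, and applying it to that null-homotopy gives $\tau$. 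So everything is pinned down once $\int_F$ is unwound on $\deloop\mathbb G_m$.

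First I would compute the central extension $\widetilde F^{\times}_{\sgn}=\int_F\nu$. Using the \v{C}ech--Kummer cocycle description of $\int_F$ from \emph{op.cit.}, the morphism $k$ sends $a\in F^{\times}$ to a cochain representative of its Kummer class $\chi_a\in H^1(F,\{\pm1\})$, while the symmetric monoidal morphism $\sgn$ --- whose commutativity constraint is the pairing $(x,y)\mapsto(-1)^{xy}$ on $\integers/2$ (\S\ref{void-sign-cover}) --- assembles two such cochains into a representative of the cup product in $H^2(F,\{\pm1\}^{\otimes2})$. Thus the $2$-cocycle underlying $\int_F\nu$ is $(a,b)\mapsto\mathrm{inv}_F(\chi_a\cup\chi_b)$, with $\mathrm{inv}_F : H^2(F,\{\pm1\}^{\otimes2})\xrightarrow{\simeq}\{\pm1\}$ the invariant map; by the cup-product formula for the quadratic Hilbert symbol (valid since $\characteristic F\neq2$) this cocycle is exactly $\{a,b\}$, and the naturality of the identifications upgrades this to the canonical isomorphism \eqref{eq-sign-cover-isomorphism-hilbert-cover}. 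Both extensions are commutative --- the source because $\nu$ is $\mathbb E_{\infty}$-monoidal, the target because $\{\cdot,\cdot\}$ is symmetric --- so the comparison is a morphism of covers.

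Next I would identify $\tau$. Transported along the isomorphism just built, $\tau(a)$ has the form $(a^2,\epsilon(a))\in F^{\times}\times\{\pm1\}=\widetilde F^{\times}_{\Hilbert}$ for some $\epsilon:F^{\times}\rightarrow\{\pm1\}$, and since $\{a^2,b^2\}=\{a,b\}^4=1$ the homomorphism property forces $\epsilon$ to be a character. The point is that the cofiber null-homotopy of $k\circ(\cdot)^2$, once pushed through $\sgn$ and $\int_F$, becomes the ``squaring-of-a-lift'' map $a\mapsto\tilde a^{\,2}$: this is well defined because any two lifts of $a$ to the double cover differ by $\pm1$, and it is a homomorphism over $F^{\times}$ because $\widetilde F^{\times}_{\Hilbert}$ is commutative. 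In Hilbert coordinates $\tilde a^{\,2}=(a,1)^2=(a^2,\{a,a\})$, so $\epsilon(a)=\{a,a\}$ as claimed; equivalently, $\epsilon(a)=\mathrm{inv}_F(\chi_a\cup\chi_a)=\{a,a\}$ by the Milnor relation $\chi_a\cup\chi_a=\chi_a\cup\chi_{-1}$. As a sanity check, $a\mapsto(a^2,\{a,a\})$ is a homomorphism over $F^{\times}$ since $\{ab,ab\}=\{a,a\}\{b,b\}\{a,b\}^2=\{a,a\}\{b,b\}$.

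The main obstacle I anticipate is in the last two steps: one must render $\int_F$ explicit enough to read off the cup-product cocycle \emph{as a cocycle}, not merely up to coboundary, and --- more delicately --- verify that the commutativity constraint $(-1)^{xy}$ of $\sgn$ contributes precisely the self-cup term $\{a,a\}$ to $\tau$, with no spurious sign. This is exactly the incarnation here of the ``subtle $2$-torsion phenomenon'' flagged in the introduction, where Gaitsgory--Lysenko's sign gerbe (\emph{cf.}~\cite{MR3769731}) and Weissman's meta-Galois group (\emph{cf.}~\cite{MR3802418}) get matched up. Phrasing the computation through the squaring-of-a-lift reformulation above is what makes the sign \emph{forced} rather than computed, and is, I expect, the cleanest way to sidestep the bookkeeping.
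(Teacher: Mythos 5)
The gap is in your first step, and it is exactly where the content of the proposition lives. You assert that, after unwinding $\int_F$ in \v{C}ech--Kummer terms, the composite $\nu=\sgn\circ k$ has underlying $2$-cocycle $(a,b)\mapsto\mathrm{inv}_F(\chi_a\cup\chi_b)=\{a,b\}$, because the commutativity constraint $(-1)^{xy}$ of $\sgn$ ``assembles two such cochains into a representative of the cup product.'' That sentence is the whole theorem: nothing in the definition of $\sgn$ (the \emph{trivial} monoidal extension of $\integers/2$ by $\deloop\{\pm 1\}$ with commutativity $(-1)^{ab}$, \S\ref{void-sign-cover}) hands you a cup-product cocycle on the nose, and a priori $\int_F(\sgn\circ k)$ could differ from $\widetilde F^{\times}_{\Hilbert}$ by a $2$-torsion twist. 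The paper proceeds in the opposite direction: it first identifies $\int_F(\deloop\Psi\cup\deloop\Psi)$ with $\widetilde F^{\times}_{\Hilbert}$ (this is where the Galois-symbol/cup-product computation actually occurs, \S\ref{void-hilbert-cover}); it then computes the symmetric monoidal extension $\widetilde{\integers}_{\Hilbert}$ parametrizing $\deloop\Psi\cup\deloop\Psi$ under \eqref{eq-multiplicative-cover-parametrization}, with cocycle $(a,b)\mapsto ab\cdot\Psi(-1)$ and commutativity $(-1)^{ab}$, using the nontrivial inputs $\Psi\cup\Psi\cong\Psi\otimes p^*\Psi(-1)$ and the evaluation of the commutativity pairing at $1\otimes 1$; and only then does it match this with the parameter $\widetilde{\integers}_{\sgn}$ of $\sgn$ by exhibiting the monoidal splitting $a\mapsto(a,\binom{a}{2}\cdot\Psi(-1))$, giving \eqref{eq-hilbert-cover-sign-cover-identification}. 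Your proposal offers no substitute for this comparison; you flag it yourself as ``the main obstacle I anticipate,'' which is accurate, but it means the argument as written assumes the conclusion at its crucial point rather than proving it.

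Your treatment of $\tau$ is closer to being right, and is a genuinely different packaging from the paper's: the answer you get, $(a,1)^2=(a^2,\{a,a\})$, is consistent with the paper's splitting \eqref{eq-hilbert-cover-square-splitting} (note $\binom{2a}{2}\equiv a\bmod 2$), and once the isomorphism \eqref{eq-sign-cover-isomorphism-hilbert-cover} is actually in hand, the ``squaring-of-a-lift'' description of $\tau$ could be verified on the $\sgn$-side parameter, where it is nearly tautological, and then transported. But as written it rests on a second unproven assertion, namely that $\int_F$ carries the cofiber null-homotopy of $k\circ(\cdot)^2$ to the squaring-of-a-lift section; the sign is ``forced'' only after the coordinate change of your first step has been established, so this reformulation does not let you sidestep the $\binom{a}{2}$/$\Psi(-1)$ bookkeeping — it presupposes it. The paper instead reads $\tau$ off from the discrepancy of splittings \eqref{eq-hilbert-cover-square-splitting} and identifies its geometric second component as $\Psi\otimes p^*\Psi(-1)\cong\Psi\cup\Psi$, which is what produces $\{a,a\}$.
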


\begin{void}
\label{void-hilbert-cover}
In order to construct the isomorphism \eqref{eq-sign-cover-isomorphism-hilbert-cover}, we first need to describe the rigidified morphism $\deloop\mathbb G_m \rightarrow \deloop^4\{\pm 1\}^{\otimes 2}$ defining the cover $\widetilde F^{\times}_{\Hilbert}$.

Recall that the fiber sequence \eqref{eq-etale-metaplectic-cover-fiber-sequence} for $G := \mathbb G_m$ and $A := \{\pm 1\}$ admits a canonical splitting (\emph{cf.}~\cite[Remark 4.2.8]{zhao2022metaplectic})
\begin{equation}
\label{eq-multiplicative-cover-splitting}
\SMaps_e(\deloop\mathbb G_m, \deloop^4\{\pm 1\}^{\otimes 2}) \xrightarrow{\simeq} \deloop^2\{\pm 1\} \oplus \integers/2.
\end{equation}
The inclusion of $\integers/2$ is defined by sending $1$ to cup product $\deloop\Psi \cup \deloop\Psi$, where $\deloop\Psi : \deloop\mathbb G_m \rightarrow \deloop^2\{\pm 1\}$ is the deloop of the Kummer map.\footnote{We temporarily depart from our convention where $\Psi$ has coefficients in $\hat{\integers}(1)$.}

\emph{Claim}: The image of $\deloop\Psi \cup \deloop\Psi$ under the construction functor \eqref{eq-construction-of-covers} for $G := \mathbb G_m$ is canonically identified with $\widetilde F^{\times}_{\Hilbert}$.
\end{void}

\begin{void}
\begin{proof}[Proof of Claim]
Let us make the functor \eqref{eq-construction-of-covers} more explicit. Given a rigidified morphism $\mu : \deloop \mathbb G_m \rightarrow \deloop^4\{\pm 1\}^{\otimes 2}$, we obtain a $\mathbb E_1$-monoidal morphism $\mathbb G_m \rightarrow \deloop^3\{\pm 1\}^{\otimes 2}$ by taking loop spaces. The fiber $\mathbb G_m^{\dagger}$ of the latter fits into a fiber sequence of $\mathbb E_1$-monoidal stacks
\begin{equation}
\label{eq-multiplicative-group-monoidal-fiber-sequence}
\deloop^2\{\pm 1\}^{\otimes 2} \rightarrow \mathbb G_m^{\dagger} \rightarrow \mathbb G_m.
\end{equation}
Evaluating \eqref{eq-multiplicative-group-monoidal-fiber-sequence} at $\Spec F$ and using the vanishing of $H^3(\Spec F, \{\pm 1\}^{\otimes 2})$, we obtain a short exact sequence of groups
\begin{equation}
\label{eq-multiplicative-group-monoidal-fiber-sequence-evaluation}
1\rightarrow H^2(\Spec F, \{\pm 1\}^{\otimes 2}) \rightarrow \widetilde{\mathbb G}_m \rightarrow F^{\times} \rightarrow 1.
\end{equation}
The image of $\mu$ under \eqref{eq-construction-of-covers} is given by \eqref{eq-multiplicative-group-monoidal-fiber-sequence-evaluation} under Tate duality $H^2(\Spec F, \{\pm 1\}^{\otimes 2}) \cong \{\pm 1\}$, endowed with the topology defined by distinguished sections (\emph{cf.}~\cite[\S2.1.4]{zhao2022metaplectic}).

In the special case $\mu := \deloop\Psi\cup \deloop\Psi$, the fiber sequence \eqref{eq-multiplicative-group-monoidal-fiber-sequence} canonically splits as a fiber sequence of \emph{pointed} stacks (\emph{cf.}~\cite[Proposition 4.4.5]{zhao2022metaplectic}). Its monoidal product can thus be described by a cocycle $\mathbb G_m \times \mathbb G_m \rightarrow \deloop^2\{\pm 1\}^{\otimes 2}$, which one identifies with the external cup product of $\Psi$ with itself. This implies that the induced short exact sequence \eqref{eq-multiplicative-group-monoidal-fiber-sequence-evaluation} has a canonical set-theoretic splitting, with cocycle given by the Galois symbol
\begin{equation}
\label{eq-kummer-cup-product-pairing}
F^{\times} \otimes F^{\times} \rightarrow H^2(\Spec F, \{\pm 1\}^{\otimes 2}),\quad a\otimes b\mapsto [\Psi(a)]\cup[\Psi(b)],
\end{equation}
where $[\Psi(a)]$ is the Kummer class of $a \in F^{\times}$. However, \eqref{eq-kummer-cup-product-pairing} becomes the quadratic Hilbert symbol after identifying $H^2(\Spec F, \{\pm 1\}^{\otimes 2})$ with $\{\pm 1\}$ under Tate duality.
\end{proof}
\end{void}

\begin{void}
\label{void-hilbert-cover-symmetric-monoidal-parameter}
Note that every rigidified morphism $\deloop\mathbb G_m \rightarrow \deloop^4\{\pm 1\}^{\otimes 2}$ is canonically $\mathbb E_{\infty}$-monoidal because its associated symmetric form vanishes (\emph{cf.}~\cite[Proposition 4.6.2]{zhao2022metaplectic}), so we may view $\deloop\Psi\cup \deloop\Psi$ as an $\mathbb E_{\infty}$-monoidal morphism $\deloop\mathbb G_m \rightarrow \deloop^4\{\pm 1\}^{\otimes 2}$. Let us identify its image in $\SMaps_{\mathbb E_{\infty}}(\integers, \deloop^2\{\pm 1\})$ under \eqref{eq-multiplicative-cover-parametrization}, viewed as a symmetric monoidal extension:
\begin{equation}
\label{eq-hilbert-cover-symmetric-monoidal-parameter}
\deloop\{\pm 1\} \rightarrow \widetilde{\integers}_{\Hilbert} \rightarrow \integers.
\end{equation}

By construction, \eqref{eq-hilbert-cover-symmetric-monoidal-parameter} is related to \eqref{eq-multiplicative-group-monoidal-fiber-sequence} (for $\mu := \deloop\Psi\cup \deloop\Psi$) as follows: We apply the functor $\SMaps_e(\mathbb G_m, -)$ to \eqref{eq-multiplicative-group-monoidal-fiber-sequence} and form the pullback and pushout along the maps
$$
\integers \rightarrow \SMaps_e(\mathbb G_m, \mathbb G_m),\quad \SMaps_e(\mathbb G_m, \deloop^2\{\pm 1\}^{\otimes 2}) \xrightarrow{\simeq} \deloop\{\pm 1\},
$$
where the first map sends $a \in \integers$ to the character $x\mapsto x^a$ and the second map is defined by the \'etale cohomology of $\mathbb G_m$, \emph{i.e.}~the inverse to tensoring with $\Psi$.
\end{void}

\begin{void}\emph{Description of $\widetilde{\integers}_{\Hilbert}$.}
Since \eqref{eq-multiplicative-group-monoidal-fiber-sequence} (for $\mu := \deloop\Psi\cup \deloop\Psi$) admits a canonical splitting as a sequence of pointed stacks, so does \eqref{eq-hilbert-cover-symmetric-monoidal-parameter}. Let us record this splitting as an isomorphism of pointed stacks
\begin{equation}
\label{eq-hilbert-cover-pointed-splitting}
\widetilde{\integers}_{\Hilbert} \xrightarrow{\simeq} \integers \times \deloop\{\pm 1\}.
\end{equation}

Using \eqref{eq-hilbert-cover-pointed-splitting}, we may write the monoidal product on $\widetilde{\integers}_{\Hilbert}$ as a cocycle
\begin{equation}
\label{eq-hilbert-cover-cocycle}
\integers\times \integers \rightarrow \deloop \{\pm 1\}.
\end{equation}
Let us write $\Psi(-1)$ for the $\{\pm 1\}$-torsor of square roots of $-1$. There is a natural isomorphism $\Psi\cup\Psi \cong \Psi \otimes p^*\Psi(-1)$ in $\SMaps_e(\mathbb G_m, \deloop^2\{\pm 1\}^{\otimes 2})$, where $p : \mathbb G_m \rightarrow \Spec F$ is the projection (\emph{cf.}~\cite[Theorem 3.1.5]{zhao2022metaplectic}). Thus the cocycle \eqref{eq-hilbert-cover-cocycle} sends $(a, b) \in \integers\times \integers$ to the $ab$-multiple of $\Psi(-1)$. The associator of the monoidal product is given by the bilinearity of \eqref{eq-hilbert-cover-cocycle}.

It remains to describe the commutativity constraint on $\widetilde{\integers}_{\Hilbert}$. By the above description of the monoidal product, this is specified by an isomorphism $ab\cdot\Psi(-1) \cong ba \cdot \Psi(-1)$ for each $a, b\in \integers$, \emph{i.e.}~by a bilinear pairing
\begin{equation}
\label{eq-hilbert-cover-commutativity-constraint}
\integers \otimes \integers \rightarrow \{\pm 1\}.
\end{equation}
(The bilinearity is a consequence of the hexagon axiom.) By \cite[Proposition 4.6.6]{zhao2022metaplectic}, the value of this pairing at $1\otimes 1$ is $-1$. Thus \eqref{eq-hilbert-cover-commutativity-constraint} is given by $a\otimes b\mapsto (-1)^{ab}$.
\end{void}

\begin{rem}
It is also possible to arrive at the above description of the monoidal structure on $\widetilde{\integers}_{\Hilbert}$ by comparing with Brylinski and Deligne's classification of central extensions of $\mathbb G_m$ by $\Ktheory_2$ (\emph{cf.}~\cite[\S3]{MR1896177}).

Indeed, $\deloop\Psi\cup\deloop \Psi$ is the image, under \'etale realization (\emph{cf.}~\cite[\S2.3.2]{zhao2022metaplectic}), of the central extension $E$ of $\mathbb G_m$ by $\Ktheory_2$ defined using the canonical pairing $\mathbb G_m\otimes \mathbb G_m \rightarrow \Ktheory_2$ as cocycle. The \'etale realization is compatible with second Brylinski--Deligne invariants, in the sense that we have a commutative square of $\mathbb E_1$-monoidal stacks
\begin{equation}
\label{eq-brylinski-deligne-etale-monoidal-compatibility}
\begin{tikzcd}[column sep = 2em]
	\integers \ar[r]\ar[d, "\simeq"] & \deloop\mathbb G_m \ar[d, "\Psi"] \\
	\integers \ar[r] & \deloop^2\{\pm 1\}
\end{tikzcd}
\end{equation}
where the top horizontal arrow is the second Brylinski--Deligne invariant of $E$ and the bottom horizontal arrow is the $\mathbb E_1$-monoidal morphism corresponding to $\widetilde{\integers}_{\Hilbert}$.\footnote{We make an important cautionary remark. Since the cocycle $a, b\mapsto (-1)^{ab}$ is commutative, the top horizontal arrow in \eqref{eq-brylinski-deligne-etale-monoidal-compatibility} is symmetric monoidal. The bottom horizontal arrow is also $\mathbb E_{\infty}$-monoidal because $\deloop\Psi\cup\deloop\Psi$ is. However, \eqref{eq-brylinski-deligne-etale-monoidal-compatibility} is \emph{not} a commutative diagram of $\mathbb E_{\infty}$-monoidal stacks: The top circuit is $\integers$-linear while the bottom circuit is not.} Now, the second Brylinski--Deligne invariant of $E$ is the central extension of $\integers$ by $\mathbb G_m$, defined using $a,b\mapsto(-1)^{ab}$ as cocycle. This implies the above description of $\widetilde{\integers}_{\Hilbert}$ as a monoidal stack.
\end{rem}

\begin{void}\emph{Monoidal splitting of $\widetilde{\integers}_{\Hilbert}$.}
\label{void-hilbert-cover-monoidal-splitting}
Let us construct a splitting of \eqref{eq-hilbert-cover-symmetric-monoidal-parameter} as a fiber sequence of \emph{monoidal} stacks. Under the identification \eqref{eq-hilbert-cover-pointed-splitting}, this splitting is given by
\begin{equation}
\label{eq-hilbert-cover-monoidal-splitting}
\integers \rightarrow \widetilde{\integers}_{\Hilbert},\quad
a\mapsto (a, \binom{a}{2}\cdot\Psi(-1)).
\end{equation}

Because the cocycle of $\widetilde{\integers}_{\Hilbert}$ is given by $a, b\mapsto ab\cdot \Psi(-1)$, the fact that this is a monoidal splitting follows from the equality of integers
$$
\binom{a + b}{2} - \binom{a}{2} - \binom{b}{2} = ab.
$$

Denote by $\widetilde{\integers}_{\sgn}$ the trivial monoidal extension of $\integers$ by $\deloop\{\pm 1\}$ with commutativity constraint specified by $\integers \otimes\integers \rightarrow \{\pm 1\}$, $a, b\mapsto (-1)^{ab}$. The monoidal splitting \eqref{eq-hilbert-cover-monoidal-splitting} exhibits an isomorphism of symmetric monoidal extensions of $\integers$ by $\deloop\{\pm 1\}$:
\begin{equation}
\label{eq-hilbert-cover-sign-cover-identification}
\widetilde{\integers}_{\sgn} \xrightarrow{\simeq} \widetilde{\integers}_{\Hilbert}.
\end{equation}
\end{void}

\begin{void}
\label{void-hilbert-cover-square-splitting}
Finally, the construction of \eqref{eq-hilbert-cover-sign-cover-identification} renders it \emph{incompatible} with the natural splittings of the two sides over $2 : \integers \rightarrow \integers$. Let us be more precise.

The extension $\widetilde{\integers}_{\sgn}$ is monoidally equivalent to $\integers \times \deloop\{\pm 1\}$ by construction, so it admits a splitting over $2 : \integers \rightarrow \integers$ sending $a \in \integers$ to $(2a, 1)$. In other words, this is the splitting induced from $\sgn$ (as a symmetric monoidal extension of $\integers/2$ by $\deloop\{\pm 1\}$), by pulling back along $\integers \rightarrow \integers/2$.

The composition of this splitting with \eqref{eq-hilbert-cover-sign-cover-identification} is the map
\begin{equation}
\label{eq-hilbert-cover-square-splitting}
\integers \rightarrow \widetilde{\integers}_{\Hilbert},\quad a\mapsto (2a, a\cdot\Psi(-1)),
\end{equation}
because of the identity
$$
\binom{2a}{2} = a \mod 2.
$$
\end{void}

\begin{void}
We are now ready to construct the isomorphism \eqref{eq-sign-cover-isomorphism-hilbert-cover}.

\begin{proof}[Proof of Proposition \ref{prop-sign-cover-identification}]
We shall construct an isomorphism of $\mathbb E_{\infty}$-monoidal morphism $\deloop \mathbb G_m \rightarrow \deloop^4\{\pm 1\}^{\otimes 2}$ which produces \eqref{eq-sign-cover-isomorphism-hilbert-cover} under the construction functor \eqref{eq-construction-of-covers}. Using the equivalence \eqref{eq-multiplicative-cover-parametrization}, it suffices to construct an isomorphism of $\mathbb E_{\infty}$-monoidal morphisms $\integers \rightarrow \deloop\{\pm 1\}$ classifying the ``sign", respectively ``Hilbert" covers. The desired isomorphism is supplied by \eqref{eq-hilbert-cover-sign-cover-identification}.

It remains to identify the section $\tau$. By \S\ref{void-hilbert-cover-square-splitting}, this section is defined by the section of symmetric monoidal stacks
$$
\begin{tikzcd}[column sep = 1.5em]
	& & \integers \ar[d, "2"]\ar[dl, swap, "\eqref{eq-hilbert-cover-square-splitting}"] \\ 
	\deloop\{\pm 1\} \ar[r] & \widetilde{\integers}_{\Hilbert} \ar[r] & \integers
\end{tikzcd}
$$

Recall the extension $\mathbb G_m^{\dagger}$ associated to $\deloop\Psi\cup\deloop\Psi$ (\emph{cf.}~\eqref{eq-multiplicative-group-monoidal-fiber-sequence}) endowed with its natural splitting $\mathbb G_m^{\dagger} \cong \mathbb G_m \times \deloop^2\{\pm 1\}^{\otimes 2}$ as a pointed stack. We want to identify the section
\begin{equation}
\label{eq-hilbert-cover-square-splitting-geometric}
\mathbb G_m \rightarrow \mathbb G_m^{\dagger}
\end{equation}
which produces \eqref{eq-hilbert-cover-square-splitting} under the construction of \S\ref{void-hilbert-cover-symmetric-monoidal-parameter}. (Recall that the construction of \emph{loc.cit.}~is a reformulation of the equivalence \eqref{eq-multiplicative-cover-parametrization}.) The section \eqref{eq-hilbert-cover-square-splitting-geometric} will, upon evaluating at $\Spec F$ and applying Tate duality, give rise to the section $\tau$:
\begin{align*}
\tau : F^{\times} &\rightarrow \Gamma(\Spec F, \mathbb G_m^{\dagger}) \\
& \xrightarrow{\simeq} F^{\times} \times \Gamma(\Spec F, \deloop^2\{\pm 1\}^{\otimes 2}) \rightarrow F^{\times} \times H^2(\Spec F, \{\pm 1\}^{\otimes 2}) \cong F^{\times }\times \{\pm 1\}.
\end{align*}

By construction, the projection of \eqref{eq-hilbert-cover-square-splitting-geometric} onto $\deloop^2\{\pm 1\}^{\otimes 2}$ is $\Psi\otimes p^*\Psi(-1)$, where $p : \mathbb G_m \rightarrow \Spec F$ is the projection. By \cite[Theorem 3.1.5]{zhao2022metaplectic}, the latter is isomorphic to the self cup-product of $\Psi$. Hence, the second component of $\tau$ sends $a\in F^{\times}$ to the image of $[\Psi(a)] \cup [\Psi(a)] \in H^2(\Spec F, \{\pm 1\}^{\otimes 2})$ under Tate duality, which is $\{a, a\}$.
\end{proof}
\end{void}

\begin{rem}
Proposition \ref{prop-sign-cover-identification} shows that \eqref{eq-sign-cover} is generally \emph{not} induced from a cover of the cokernel of $(\cdot)^2 : F^{\times} \rightarrow F^{\times}$, the obstruction being given by $\{-1, -1\} \in F^{\times}$. This element is nontrivial if and only if $F$ is an odd degree extension of $\rationals_2$.
\end{rem}

\begin{void}
\label{void-sign-cover-meta-weil-group}
Let us now apply Proposition \ref{prop-sign-cover-identification} to the Langlands duality for tori.

Denote by $\extension^1(F^{\times}_{/2}, \complexes^{\times})$ the fiber of the endomorphism of $\extension^1(F^{\times}, \complexes^{\times})$ defined by pre-composition with $(\cdot)^2 : F^{\times} \rightarrow F^{\times}$. The commutative diagram \eqref{eq-sign-cover} together with the tautological inclusion $\{\pm 1\} \subset \complexes^{\times}$, defines an object
$$
\widetilde F^{\times}_{\sgn, /2} \in \extension^1(F^{\times}_{/2}, \complexes^{\times}).
$$

Applying the functor \eqref{eq-class-field-theory-functor} for $T := \mathbb G_m$ and using its naturality with respect to $(\cdot)^2 : \mathbb G_m \rightarrow \mathbb G_m$, we obtain a functor
$$
\Langlands_{\deloop\{\pm 1\}} : \cycle^2(W_F, \{\pm 1\}) \rightarrow \extension^1(F^{\times}_{/2}, \complexes^{\times}).
$$

Let us view the meta-Weil group $\widetilde W_F$ (\emph{cf.}~\S\ref{eq-meta-weil-group}) as an object of $\cycle^2(W_F, \{\pm 1\})$.
\end{void}

\begin{cor}
\label{cor-sign-cover-dual-identification}
There is a canonical isomorphism in $\extension^1(F^{\times}_{/2}, \complexes^{\times})$:
\begin{equation}
\label{eq-sign-cover-dual-identification}
\Langlands_{\deloop\{\pm 1\}}(\widetilde W_F) \xrightarrow{\simeq} \widetilde F^{\times}_{\sgn, /2}.
\end{equation}
\end{cor}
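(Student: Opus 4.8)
The plan is to deduce the corollary from Proposition~\ref{prop-sign-cover-identification} by transporting its content through Langlands duality for $\mathbb G_m$, which by Remark~\ref{rem-class-field-theory-functor-loop} is, at the level of $\pi_1$, inverse to pullback along the Artin reciprocity map. First I would record a fiber-sequence description of $\Langlands_{\deloop\{\pm1\}}$: by its very definition it is the functor induced on homotopy fibers of the endomorphisms of $\cycle^2(W_F,\complexes^\times)$ and $\extension^1(F^\times,\complexes^\times)$ attached to the self-dual morphism $(\cdot)^2 : \mathbb G_m \to \mathbb G_m$. On the Galois side that endomorphism is multiplication by $2$ for the $\integers$-linear (Baer-sum) structure, and its fiber is $\cycle^2(W_F,\{\pm1\})$ since $0 \to \{\pm1\} \to \complexes^\times \xrightarrow{(\cdot)^2} \complexes^\times \to 0$ is exact ($\complexes^\times$ being divisible); on the automorphic side the fiber is $\extension^1(F^\times_{/2},\complexes^\times)$ by definition. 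I would also record the routine compatibility that multiplication by $2$ on $\cycle^2(W_F,\complexes^\times)$ matches pre-composition with $(\cdot)^2 : F^\times \to F^\times$ on $\extension^1(F^\times,\complexes^\times)$ under the natural isomorphism $n_\ast \cong n^\ast$ for commutative extensions of abelian groups, so that the two fibers correspond as claimed.

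Next I would unwind the two objects. By construction of the meta-Weil group (\S\ref{eq-meta-weil-group}), $\widetilde W_F$ is the pullback of the Hilbert extension \eqref{eq-hilbert-cover} along $W_F \to F^\times$; regarded through the equivalence between $\cycle^2(W_F,\{\pm1\})$ and the homotopy fiber of multiplication by $2$, it is thus the pair consisting of the $\complexes^\times$-gerbe $\widetilde{\check{\mathbb G}}_m$ — the Artin pullback of $\widetilde F^\times_{\Hilbert}$ pushed along $\{\pm1\} \hookrightarrow \complexes^\times$ — together with the canonical nullhomotopy of its Baer square furnished by the fiber sequence $\{\pm1\} \to \complexes^\times \xrightarrow{(\cdot)^2} \complexes^\times$. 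Since pullback along $W_F \to F^\times$ is $\integers$-linear, this nullhomotopy is itself the Artin pullback of the corresponding canonical nullhomotopy attached to $\widetilde F^\times_{\Hilbert,\zeta}$, the pushout of $\widetilde F^\times_{\Hilbert}$ along $\{\pm1\} \hookrightarrow \complexes^\times$. Now I apply $\Langlands_{\mathbb G_m}$: because $\pi_1 \Langlands_{\mathbb G_m}$ is Artin reciprocity (Remark~\ref{rem-class-field-theory-functor-loop}), which inverts restriction along $W_F \to F^\times$ on characters, and because both $\cycle^2(W_F,\complexes^\times)$ (as $H^2(W_F,\complexes^\times) = 0$) and $\extension^1(F^\times,\complexes^\times)$ (as $\complexes^\times$ is divisible) are connected, the functor $\Langlands_{\mathbb G_m}$ sends the Artin pullback of any commutative extension $E$ of $F^\times$ by $\complexes^\times$ canonically and naturally back to $E$. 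In particular $\Langlands_{\mathbb G_m}(\widetilde{\check{\mathbb G}}_m) \cong \widetilde F^\times_{\Hilbert,\zeta}$, which under Proposition~\ref{prop-sign-cover-identification} is $\widetilde F^\times_{\sgn,\zeta}$; what remains is to match the accompanying trivialization of the Baer square.

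The crux is exactly this matching. One must check that, under the standard identification of the Baer square of $\widetilde F^\times_{\Hilbert,\zeta}$ with its pullback along $(\cdot)^2 : F^\times \to F^\times$, the canonical nullhomotopy above corresponds to the section $a \mapsto (a^2, \{a,a\})$ singled out in Proposition~\ref{prop-sign-cover-identification} — equivalently, that it is \emph{not} the ``naive'' section $a \mapsto (a^2, 1)$, the discrepancy being the $2$-torsion symbol $\{a,a\} = [\Psi(a)] \cup [\Psi(a)]$. This is precisely the computation already performed in \S\ref{void-hilbert-cover-square-splitting}, read from the Hilbert side back to the sign side: the monoidal splitting \eqref{eq-hilbert-cover-monoidal-splitting} fails to be compatible with the evident splittings over $2 : \integers \to \integers$ exactly by $\binom{2a}{2} \equiv a \pmod{2}$, and passing through $\SMaps_e(\mathbb G_m,-)$ and Tate duality converts this into the factor $\{a,a\}$. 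I expect this bookkeeping — carrying the $2$-torsion sign coherently through the two homotopy-fiber constructions — to be the only genuine obstacle; everything else reduces to formal manipulation of the connected $2$-groupoids in play, where Langlands duality for tori enters only through its effect on $\pi_1$.
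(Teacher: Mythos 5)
Your proposal is correct and follows essentially the same route as the paper's proof: identify the underlying extension via Proposition \ref{prop-sign-cover-identification}, using that $\Langlands_{\mathbb G_m}$ inverts Artin pullback on the connected groupoids in play, and then match the two squaring trivializations through the pushout/pullback comparison, with the symbol $\{a,a\}$ as the discrepancy. The only point you leave slightly implicit is that the canonical nullhomotopy of the Baer square of $\widetilde F^{\times}_{\Hilbert,\zeta}$ corresponds, under that comparison, to the section $a\mapsto(a^2,\{a,a\})$ rather than the naive one --- the paper settles exactly this by the short element-level computation transporting $\tau$ to the retraction $(a,z)\mapsto z^2$ whose kernel is \eqref{eq-hilbert-cover}, a step not literally contained in \S\ref{void-hilbert-cover-square-splitting} but immediate from the Hilbert cocycle.
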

\begin{proof}
For an abelian group $M$, denote by $\cycle^2_e(W_F, M)$ the fiber of the map $e^* : \cycle^2(W_F, M) \rightarrow \cycle^2(*, M)$ given by pullback along the neutral point $e : * \rightarrow */W_F$. Thus $\cycle^2_e(W_F, M)$ is canonically equivalent to the groupoid of central extensions of $W_F$ by $M$.

The restriction of $\Langlands_{\mathbb G_m}$ to $\cycle^2_e(W_F, \complexes^{\times})$ admits the following explicit description: Pulling back a commutative extension of $F^{\times}$ by $\complexes^{\times}$ along the Artin reciprocity map $W_F \rightarrow F^{\times}$ yields an equivalence of groupoids
\begin{equation}
\label{eq-pullback-by-artin-reciprocity}
\extension^1(F^{\times}, \complexes^{\times}) \xrightarrow{\simeq} \cycle^2_e(W_F, \complexes^{\times}),
\end{equation}
whose inverse coincides with the restriction of $\Langlands_{\mathbb G_m}$.

In what follows, we view $\widetilde F^{\times}_{\sgn}$ as an object of $\extension^1(F^{\times}, \complexes^{\times})$. It suffices to identify its image under \eqref{eq-pullback-by-artin-reciprocity} with the extension of $W_F$ by $\complexes^{\times}$ induced from $\widetilde W_F$, and match the $2$-torsion structures defined by $\tau$ and $\widetilde W_F$. The identification follows from the isomorphism \eqref{eq-sign-cover-isomorphism-hilbert-cover}. The matching of $2$-torsion structures follows from an explicit calculation, as we now perform.

Multiplication by $2$ on $\widetilde F^{\times}_{\sgn}$ factors through an isomorphism
\begin{equation}
\label{eq-sgn-pushout-pullback-isomorphism}
\complexes^{\times} \sqcup_{\complexes^{\times}} \widetilde F^{\times}_{\sgn} \xrightarrow{\simeq} \widetilde F^{\times}_{\sgn} \times_{F^{\times}} F^{\times}
\end{equation}
where the push-out is along $(\cdot)^2 : \complexes^{\times} \rightarrow \complexes^{\times}$ and the pullback is along $(\cdot)^2 : F^{\times} \rightarrow F^{\times}$. Using the isomorphism \eqref{eq-sign-cover-isomorphism-hilbert-cover}, we may represent an element of $\widetilde F^{\times}_{\sgn}$ by a pair $(a, z)$ with $a \in F^{\times}$ and $z\in\complexes^{\times}$. Its image under \eqref{eq-sgn-pushout-pullback-isomorphism} is
$$
((a^2, \{a, a\} z^2), a)
$$
which equals the product of $z^2$ with the $(\tau(a), a)$. Hence, splitting of the pushout induced from $\tau$ sends $(a, z)$ to $z^2$. The kernel of this map is the extension \eqref{eq-hilbert-cover}, as desired.
\end{proof}

\begin{void}
Given any $F$-torus $T$ equipped with a character $\epsilon : \Lambda \rightarrow \integers/2$, where $\Lambda$ is the sheaf of cocharacters of $T$, we have a commutative diagram
\begin{equation}
\label{eq-torus-sign-cover-functoriality}
\begin{tikzcd}[column sep = 1.5em]
	\cycle^2(W_F, \{\pm 1\}) \ar[r, "\Langlands_{\deloop\{\pm 1\}}"]\ar[d, "\epsilon^{\vee}"] & \extension^1(F^{\times}_{/2}, \complexes^{\times}) \ar[d, "\epsilon"] \\
	\cycle^2(W_F, \check T(\complexes)) \ar[r, "\Langlands_T"] & \extension^1(T(F), \complexes^{\times})
\end{tikzcd}
\end{equation}

Denote by $\mu$ the $\mathbb E_{\infty}$-monoidal morphism $\deloop T \rightarrow \deloop^4\{\pm 1\}^{\otimes 2}$ obtained by composing $\sgn$ with $\epsilon\otimes \deloop\Psi$ (\emph{cf.}~\eqref{eq-sign-cover-composition-with-character}) and by $\widetilde T$ the induced commutative extension of $T(F)$ by $\complexes^{\times}$.

Denote by $\widetilde{\check T}$ the $\check T(\complexes)$-gerbe over the pro-space $*/W_F$ induced from $\widetilde W_F$ along the dual character $\epsilon^{\vee} : \{\pm 1\} \rightarrow \check T(\complexes)$.
\end{void}

\begin{cor}
There is a canonical isomorphism in $\extension^1(T(F), \complexes^{\times})$ functorial in $(T, \epsilon)$:
\begin{equation}
\label{eq-torus-sign-cover-dual-identification}
\Langlands_T(\widetilde{\check T}) \xrightarrow{\simeq} \widetilde T.
\end{equation}
\end{cor}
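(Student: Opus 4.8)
The plan is to obtain \eqref{eq-torus-sign-cover-dual-identification} by transporting the universal identification of Corollary~\ref{cor-sign-cover-dual-identification} along $\epsilon$, via the commutative square \eqref{eq-torus-sign-cover-functoriality}. The starting observation is that \emph{both} objects in \eqref{eq-torus-sign-cover-dual-identification} are, by construction, the images along $\epsilon$ (respectively $\epsilon^{\vee}$) of their counterparts for the pair $(\mathbb G_m, \mathrm{id}_{\integers/2})$. On the dual side this is immediate from the definitions: $\widetilde{\check T}$ is the $\check T(\complexes)$-gerbe induced from $\widetilde W_F$ along $\epsilon^{\vee}$, hence it is the image of $\widetilde W_F$ under the left vertical arrow $\epsilon^{\vee}\colon \cycle^2(W_F,\{\pm 1\})\to\cycle^2(W_F,\check T(\complexes))$ of \eqref{eq-torus-sign-cover-functoriality}. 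On the automorphic side, since $\mu=\sgn\circ(\epsilon\otimes\deloop\Psi)$ is obtained from $\sgn$ by precomposition, I would invoke the naturality of the construction functor \eqref{eq-construction-of-covers} (\emph{cf.}~\cite[\S2.1]{zhao2022metaplectic}) to identify $\widetilde T$ with the image of $\widetilde F^{\times}_{\sgn,/2}$ — the diagram \eqref{eq-sign-cover} together with the inclusion $\{\pm 1\}\subset\complexes^{\times}$ — under the right vertical arrow $\epsilon$ of \eqref{eq-torus-sign-cover-functoriality}.

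Granting these two identifications, the desired isomorphism is the composite
\[
\Langlands_T(\widetilde{\check T})\;\cong\;\Langlands_T\bigl(\epsilon^{\vee}(\widetilde W_F)\bigr)\;\cong\;\epsilon\bigl(\Langlands_{\deloop\{\pm 1\}}(\widetilde W_F)\bigr)\;\xrightarrow{\simeq}\;\epsilon\bigl(\widetilde F^{\times}_{\sgn,/2}\bigr)\;\cong\;\widetilde T,
\]
in which the middle isomorphism is the commutativity of \eqref{eq-torus-sign-cover-functoriality} and the displayed arrow is the image under $\epsilon$ of the universal isomorphism \eqref{eq-sign-cover-dual-identification}. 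Functoriality in $(T,\epsilon)$ is then automatic: every ingredient — the construction functor \eqref{eq-construction-of-covers}, the gerbe construction of \S\ref{void-metaplectic-dual-etale-gerbes}, the assignment $T\mapsto\Langlands_T$, and the square \eqref{eq-torus-sign-cover-functoriality} — is natural, while Corollary~\ref{cor-sign-cover-dual-identification} contributes only a single fixed isomorphism that gets pushed forward.

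The only genuinely delicate point, which I expect to be the main obstacle, is the identification of $\widetilde T$ with $\epsilon(\widetilde F^{\times}_{\sgn,/2})$ on the nose. One must check that applying $\int_F$ to $\sgn\circ(\epsilon\otimes\deloop\Psi)$ agrees — as an object of $\extension^1(T(F),\complexes^{\times})$ \emph{together with its refinement remembering the splitting over squares} — with first forming the diagram \eqref{eq-sign-cover} and then transporting along $\epsilon$. Here the factorization of $\epsilon\otimes\deloop\Psi$ through $\deloop\Psi$, together with the resulting nullhomotopy of $\deloop\Psi\circ(\cdot)^2$, is exactly what produces the ``$/2$'' structure, and the section $\tau$ of \eqref{eq-sign-cover} must be matched with the analogous datum extracted from $\mu$. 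This is the same $2$-torsion bookkeeping that was settled in Corollary~\ref{cor-sign-cover-dual-identification}; once it is carried through the functoriality, no further input is required.
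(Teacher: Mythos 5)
Your proposal is correct and follows essentially the same route as the paper: the paper also defines \eqref{eq-torus-sign-cover-dual-identification} simply as the image of the universal isomorphism \eqref{eq-sign-cover-dual-identification} under the right vertical functor of the commutative square \eqref{eq-torus-sign-cover-functoriality}. The compatibility you flag as the delicate point (that $\int_F$ applied to $\sgn\circ(\epsilon\otimes\deloop\Psi)$ agrees with transporting $\widetilde F^{\times}_{\sgn,/2}$ along $\epsilon$) is exactly what the commutativity of \eqref{eq-torus-sign-cover-functoriality} and the naturality of the construction functor encode, and the paper likewise treats it as built into that diagram rather than re-verifying it.
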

\begin{proof}
The isomorphism \eqref{eq-torus-sign-cover-dual-identification} is defined as the image of \eqref{eq-sign-cover-dual-identification} under the right vertical functor of \eqref{eq-torus-sign-cover-functoriality}, using the commutativity of the latter.
\end{proof}

\subsection{The $\integers$-linear component}
\label{sec-torus-linear-cover}

\begin{void}
Let $A$ be a finite abelian group of order invertible in $F$ equipped with an injective character $\zeta : A \rightarrow \complexes^{\times}$. Let $T$ be an $F$-torus and $\mu$ be a $\integers$-linear morphism $\deloop T \rightarrow \deloop^4A(1)$.

The first goal of this subsection is to construct the isomorphism \eqref{eq-torus-commutative-cover-duality} for $(T, \mu)$, \emph{i.e.}~we shall produce an isomorphism in $\extension^1(T(F), \complexes^{\times})$ functorial in $(T, \mu)$:
\begin{equation}
\label{eq-torus-linear-cover-duality}
	\Langlands_T(\widetilde{\check T}) \xrightarrow{\simeq} \widetilde T_{\zeta}.
\end{equation}

Afterwards, we will combine \eqref{eq-torus-sign-cover-dual-identification} and \eqref{eq-torus-linear-cover-duality} to prove Theorem \ref{thm-torus-commutative-cover-duality}.
\end{void}

\begin{void}
Denote by $\Lambda$ the \'etale sheaf of cocharacters of $T$. Recall that tensor product with $\deloop\Psi$ defines an equivalence
\begin{equation}
\label{eq-torus-linear-cover-classification}
\Hom_{\integers}(\Lambda, \deloop^2 A) \xrightarrow{\simeq} \Hom_{\integers}(\deloop T, \deloop^4A(1)).
\end{equation}

Thus, $\mu$ corresponds under \eqref{eq-torus-linear-cover-classification} to a $\integers$-linear morphism $\Lambda \rightarrow \deloop^2 A$. Inducing the latter along $\zeta : A \rightarrow \complexes^{\times}$ and passing to adjoints, we obtain the $\check T(\complexes)$-gerbe $\widetilde{\check T}$.
\end{void}

\begin{void}\emph{Split tori.}
\label{void-torus-linear-cover-duality-split}
Let us first construct \eqref{eq-torus-linear-cover-duality} in the special case where $T$ is split. We view $\Lambda$ as an abelian group. The functor $\Langlands_T$ renders the following diagram commute:
$$
\begin{tikzcd}[column sep = 1em]
	\cycle^2(W_F, \check T(\complexes)) \ar[r, "\simeq"]\ar[d, "\Langlands_T"] & \Hom(\Lambda, \cycle^2(W_F, \complexes^{\times})) \ar[d, "{\Hom(\Lambda, \Langlands_{\mathbb G_m})}"] \\
	\extension^1(T(F), \complexes^{\times}) \ar[r, "\simeq"] & \Hom(\Lambda, \extension^1(F^{\times}, \complexes^{\times}))
\end{tikzcd}
$$
Here, the horizontal isomorphisms are induced from $\check T(\complexes) \cong \check{\Lambda} \otimes \complexes^{\times}$ and $T(F) \cong \Lambda\otimes F^{\times}$.

Since $\mu$ is the tensor product of $\deloop\Psi$ with a $\integers$-linear morphism $\Lambda \rightarrow \deloop^2 A$, the construction of \eqref{eq-torus-linear-cover-duality} reduces to the case $T = \mathbb G_m$, where $\mu$ corresponds to a section of $\deloop^2A$. It remains to identify the composition
\begin{equation}
\label{eq-multiplicative-group-linear-cover-dual}
\Gamma(\Spec F, \deloop^2 A) \xrightarrow{\simeq} \cycle^2(W_F, A) \xrightarrow{\zeta} \cycle^2(W_F, \complexes^{\times}) \xrightarrow{\Langlands_{\mathbb G_m}} \extension^1(F^{\times}, \complexes^{\times})
\end{equation}
with the composition
\begin{equation}
\label{eq-multiplicative-group-linear-cover-construction}
\Gamma(\Spec F, \deloop^2A) \xrightarrow{\otimes\deloop\Psi} \Hom_{\integers}(\deloop\mathbb G_m, \deloop^4A(1)) \xrightarrow{\int_F} \extension^1(F^{\times}, A) \xrightarrow{\zeta} \extension^1(F^{\times}, \complexes^{\times}).
\end{equation}
\end{void}

\begin{lem}
The maps \eqref{eq-multiplicative-group-linear-cover-dual} and \eqref{eq-multiplicative-group-linear-cover-construction} are canonically isomorphic.
\end{lem}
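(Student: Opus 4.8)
The plan is to compute both composites explicitly and to recognize them as two packagings of the cup product on the \'etale cohomology of $\Spec F$, interchanged by the compatibility of local Tate duality with Kummer theory and the Artin reciprocity map. Throughout I would use that both functors are $\integers$-linear and natural in $(A,\zeta)$, so that by additivity it suffices to treat $A$ cyclic; that the target $\extension^1(F^{\times},\complexes^{\times})$ is connected — being equivalent to $\cycle^2_e(W_F,\complexes^{\times})$ via pullback along the Artin map, cf.\ \eqref{eq-pullback-by-artin-reciprocity}, together with $H^2(W_F,\complexes^{\times})=0$ — so that no subtle $\pi_0$-obstruction intervenes; and that, the target being $1$-truncated, both composites factor through the $1$-truncation of $\Gamma(\Spec F,\deloop^2A)$, so that only the cohomology groups $H^1(\Spec F,A)$ and $H^2(\Spec F,A)$ are at stake.

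Next I would unwind \eqref{eq-multiplicative-group-linear-cover-construction}. For a section $c$ of $\deloop^2A$ over $\Spec F$, let $\mu_c:\deloop\mathbb G_m\rightarrow\deloop^4A(1)$ be the $\integers$-linear morphism corresponding to $c$ under \eqref{eq-torus-linear-cover-classification}; it loops to the $\mathbb E_1$-monoidal morphism $\mathbb G_m\rightarrow\deloop^3A(1)$ sending $a$ to the cup product of $c$ with the Kummer class $\Psi(a)$. Feeding this into the explicit description of $\int_F$ recalled in \S\ref{void-hilbert-cover} — forming the monoidal fiber $\mathbb G_m^{\dagger}$, evaluating at $\Spec F$, and invoking the vanishing of $H^3(\Spec F,A(1))$ together with Tate duality $H^2(\Spec F,A(1))\cong A$ — one obtains an explicit cocycle for $\int_F(\mu_c)$ built out of $c$ and Kummer classes. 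On the dual side, by the construction of $\Langlands_{\mathbb G_m}$ in \S\ref{void-class-field-theory-functor} and the description of its restriction to $\cycle^2_e(W_F,\complexes^{\times})$ in the proof of Corollary \ref{cor-sign-cover-dual-identification} (namely, that $\Langlands_{\mathbb G_m}$ inverts pullback along the Artin reciprocity map), the composite \eqref{eq-multiplicative-group-linear-cover-dual}, pulled back along $W_F\rightarrow F^{\times}$, is simply $\zeta_*$ applied to $c$ under the identification $\Gamma(\Spec F,\deloop^2A)\xrightarrow{\simeq}\cycle^2(W_F,A)$.

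The two composites then agree once one knows that the cocycle of $\int_F(\mu_c)$, restricted along the Artin reciprocity map, recovers $\zeta_*c$; this is precisely the classical compatibility of local Tate duality with the cup product, Kummer theory, and the reciprocity map (so that the pairing $H^1(\Spec F,A)\otimes H^1(\Spec F,A(1))\rightarrow H^2(\Spec F,A(1))\cong A$ becomes the evaluation pairing after passing through Kummer theory and class field theory). Since pullback along the Artin map is an equivalence onto $\cycle^2_e(W_F,\complexes^{\times})$, this forces the desired isomorphism. I expect the main difficulty to be carrying this identification out at the level of the full $\integers$-linear functors rather than on cohomology groups — that is, matching not merely the maps on homotopy groups but the cup product coherences that witness naturality. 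I would handle this by keeping the fiber sequence $\deloop^2A(1)\rightarrow\mathbb G_m^{\dagger}\rightarrow\mathbb G_m$ as a diagram of (pro-)spaces throughout, and expressing the comparison as an explicit zig-zag of natural equivalences built from the cup product and the residue map along $\mathbb G_m\rightarrow\Spec F$ — a derived refinement of the classical statement above — with the reduction to cyclic $A$ keeping the bookkeeping manageable.
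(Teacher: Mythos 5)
You have correctly isolated the classical input on which the paper's proof ultimately rests: the compatibility of Artin reciprocity with the Kummer cup-product pairing under Tate duality, which is exactly the square \eqref{eq-artin-symbol-via-kummer}, and your description of the dual side (that $\Langlands_{\mathbb G_m}$ inverts pullback along the Artin map, via \eqref{eq-pullback-by-artin-reciprocity}) matches the paper. The gap is in the reduction that is supposed to make this $H^1$-level statement suffice. Connectivity of the target $\extension^1(F^{\times},\complexes^{\times})$ does not mean that ``no subtle $\pi_0$-obstruction intervenes'': the relevant $\pi_0$ is that of the \emph{source}, namely $H^2(\Spec F, A)$, which is generally nonzero. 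Two $\integers$-linear maps out of $\tau_{\le 1}\Gamma(\Spec F,\deloop^2A)$ into the connected $1$-truncated target can induce the same maps on homotopy groups and still fail to be homotopic: the group of homotopy classes of such maps contains, besides $\Hom\bigl(H^1(\Spec F,A),\Hom(F^{\times},\complexes^{\times})\bigr)$, a contribution $\mathrm{Ext}^1\bigl(H^2(\Spec F,A),\Hom(F^{\times},\complexes^{\times})\bigr)$, and since the group of smooth characters of $F^{\times}$ is not divisible this is typically nonzero (already for $F=\rationals_p$, $A=\{\pm 1\}$, $p$ odd). Moreover the lemma asks for a \emph{canonical} isomorphism, so you must construct it on the non-neutral components of $\Gamma(\Spec F,\deloop^2A)$, functorially in the object $c$; matching the induced maps on $H^1$ and $H^2$ cannot do that. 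Your proposed remedy --- an explicit cocycle for $\int_F(c\otimes\deloop\Psi)$ for arbitrary $c$, plus a ``zig-zag of natural equivalences'' --- is precisely where the difficulty sits: when the class of $c$ in $H^2(\Spec F,A)$ is nontrivial there is no canonical cocycle presentation of the resulting extension, so the comparison would depend on choices whose independence and coherence your sketch does not address.

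The paper closes this gap by using naturality in $A$ in the opposite direction from your cyclic reduction: both composites are natural in the finite subgroup $A\subset\complexes^{\times}$, hence factor through $\colim_{A_1\supseteq A}\Gamma(\Spec F,\deloop^2A_1)$, and since $\colim_{A_1}H^2(\Spec F,A_1)=0$ this colimit is connected, identified with the colimit of the deloopings $*/\Gamma(\Spec F,\deloop A_1)$. After this step the two maps are detected on loop spaces at the base point, functorially in $A$, which is exactly the square \eqref{eq-artin-symbol-via-kummer} --- the statement you also arrived at. If you prefer to keep your explicit route, you would need to actually produce, naturally in $c$, an isomorphism of central extensions of $W_F$ between the Artin pullback of $\zeta_*\int_F(c\otimes\deloop\Psi)$ and the extension defined by $\zeta_*c$, not merely an equality of cohomology classes; the colimit argument is the device the paper uses to avoid having to do that.
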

\begin{proof}
Both maps are natural in the finite subgroup $A$ of $\complexes^{\times}$. Thus they factor through the colimit of $\Gamma(\Spec F, \deloop^2A_1)$, taken over subgroups $A_1$ of $\complexes^{\times}$ containing $A$. Since the colimit of $H^2(\Spec F, A_1)$ vanishes, we have an isomorphism
$$
\colim_{A_1} */\Gamma(\Spec F, \deloop A_1) \xrightarrow{\simeq} \colim_{A_1} \Gamma(\Spec F, \deloop^2 A_1).
$$
Thus, it suffices to identify \eqref{eq-multiplicative-group-linear-cover-dual} and \eqref{eq-multiplicative-group-linear-cover-construction} over the neutral component of $\Gamma(\Spec F, \deloop^2A)$ for every finite subgroup $A$ of $\complexes^{\times}$, functorially in $A$.

By taking loop spaces, this reduces to the commutativity of the diagram
\begin{equation}
\label{eq-artin-symbol-via-kummer}
\begin{tikzcd}[column sep = 1em]
\Gamma(\Spec F, \deloop A) \ar[r, "\simeq"]\ar[d, "\otimes\Psi"] & \cycle^1(W_F, A) \ar[d, "\mathrm{Artin}"] \\
\Hom_{\integers}(\mathbb G_m, \deloop^2 A(1)) \ar[r] & \Hom(F^{\times}, A)
\end{tikzcd}
\end{equation}
where the right vertical map is Artin reciprocity and the lower horizontal map is the evaluation at $\Spec F$ followed by Tate duality $H^2(\Spec F, A(1)) \cong A$. The commutativity of \eqref{eq-artin-symbol-via-kummer} amounts to expressing Artin reciprocity as adjoint to the pairing
\begin{align*}
H^1(\Spec F, A) \otimes F^{\times} &\xrightarrow{\id\otimes\Psi} H^1(\Spec F, A) \otimes H^1(\Spec F, \hat{\integers}(1)) \\
& \xrightarrow{\cup} H^2(\Spec F, A(1)) \xrightarrow{\simeq} A,
\end{align*}
which is essentially its definition.
\end{proof}

\begin{void}\emph{Induced tori.}
Suppose that $T$ is the Weil restriction of a split $F_1$-torus $T_1$ for a finite Galois extension $F \subset F_1$. The definition of $\Langlands_T$ renders the following diagram commute
\begin{equation}
\label{eq-weil-restriction-class-field-theory-functor-compatibility}
\begin{tikzcd}[column sep = 1.5em]
	\cycle^2(W_F, \check T(\complexes)) \ar[r, "\Langlands_T"]\ar[d, "\simeq"] & \extension^1(T(F), \complexes^{\times}) \ar[d, "\simeq"] \\
	\cycle^2(W_{F_1}, \check T_1(\complexes)) \ar[r, "\Langlands_{T_1}"] & \extension^1(T_1(F_1), \complexes^{\times})
\end{tikzcd}
\end{equation}
Here, the right vertical isomorphism is induced from the identification $T(F) \cong T_1(F_1)$ and the left vertical isomorphism is induced from the identification between $\check T(\complexes)$ and the pushforward of $\check T_1(\complexes)$ along $*/W_{F_1} \rightarrow */W_F$.

Denote by $\nu : \Spec F_1 \rightarrow \Spec F$ the natural map. The \'etale sheaf $\Lambda$ coincides with $\nu_*\Lambda_1$, where $\Lambda_1$ is the (constant) sheaf of cocharacters of $\Lambda_1$. The adjunction between $\nu_*$ and $\nu^!$ as functors on \'etale sheaves yields an isomorphism
\begin{align}
\notag
\Hom_{\integers}(\Lambda, A[2]) &\xrightarrow{\simeq} \Hom_{\integers}(\nu_*\Lambda_1, A[2]) \\
\label{eq-etale-sheaf-shriek-adjunction}
& \xrightarrow{\simeq} \Hom_{\integers}(\Lambda_1, \nu^!A[2]) \xrightarrow{\simeq} \Hom_{\integers}(\Lambda_1, A[2]),
\end{align}
where the last isomorphism comes from the identification $\nu^!A \cong A$.

Under the equivalences \eqref{eq-artin-symbol-via-kummer} and \eqref{eq-torus-linear-cover-classification}, the $\integers$-linear morphism $\mu : \deloop T \rightarrow \deloop^4 A(1)$ corresponds to a $\integers$-linear morphism $\mu_1 : \deloop T_1 \rightarrow \deloop^4A(1)$ over $\Spec F_1$. Furthermore, \eqref{eq-etale-sheaf-shriek-adjunction} is compatible with the vertical isomorphisms of \eqref{eq-weil-restriction-class-field-theory-functor-compatibility}, so the desired isomorphism \eqref{eq-torus-linear-cover-duality} for $T$ follows from the one for $T_1$ (\emph{cf.}~\S\ref{void-torus-linear-cover-duality-split}).
\end{void}

\begin{void}\emph{General tori.}
We turn to the case where $T$ is any $F$-torus. Choose a finite Galois extension $F \subset F_1$ such that $T_1 := T\times_{\Spec F}\Spec F_1$ splits. Denote by $T'$ the Weil restriction of $T_1$ to $\Spec F$, so we have an injection $T \rightarrow T'$ of $F$-tori.

Note that $\mu$ extends to a $\integers$-linear morphism $\mu' : \deloop T' \rightarrow \deloop^4A(1)$: By \eqref{eq-torus-linear-cover-classification}, it suffices to prove that any $\integers$-linear morphism $\Lambda \rightarrow \deloop^2A$ extends to a $\integers$-linear morphism $\Lambda' \rightarrow \deloop^2 A$. The obstruction lies in the cohomology group
$$
H^3(\Spec F, (\Lambda'/\Lambda)^{\vee} \otimes A)
$$
which vanishes because $\Lambda'/\Lambda$ is torsion-free and $\Spec F$ has cohomological dimension $2$.

The desired isomorphism \eqref{eq-torus-linear-cover-duality} for $T$ thus follows from the one for $T'$ by functoriality along the map of $F$-tori $T \rightarrow T'$. We omit the verification that this isomorphism is independent of the choice of $F_1$ and the extension $\mu'$.
\end{void}

\begin{rem}
The cover $\widetilde T$ associated to a $\integers$-linear morphism $\mu : \deloop T \rightarrow \deloop^4A(1)$ has been constructed by Kaletha (\emph{cf.}~\cite[\S2.2]{kaletha2022}). However, his construction is effectively the left-hand-side of \eqref{eq-torus-linear-cover-duality}.\footnote{For this reason, the construction of $\widetilde G$ for a $\integers$-linear morphism $\mu : \deloop G_{\abelian} \rightarrow \deloop^4A(1)$ given in \emph{op.cit.}~requires $G$ to be quasi-split.} Therefore, one may also interpret \eqref{eq-torus-linear-cover-duality} as the comparison between our construction of $\widetilde T$, which does not invoke Langlands duality, with Kaletha's.
\end{rem}

\begin{void}
Let us now gather all ingredients to prove Theorem \ref{thm-torus-commutative-cover-duality}.
\begin{proof}[Proof of Theorem \ref{thm-torus-commutative-cover-duality}]
Consider the decomposition \eqref{eq-sharp-cover-decomposition} for $\mu$:
$$
\mu \xrightarrow{\simeq} \mu^{(1)} + \mu^{(2)},
$$
where $\mu^{(1)}$ is defined by a character $\epsilon : \Lambda \rightarrow \integers/2$ and $\mu^{(2)}$ is $\integers$-linear. (By convention, $\mu^{(1)}$ is trivial unless $|A|$ is even.)

Denote by $\widetilde T^{(1)}_{\zeta}$ and $\widetilde T^{(2)}_{\zeta}$ the extensions of $T(F)$ by $\complexes^{\times}$ induced from $\mu^{(1)}$, and by $\widetilde{\check T}^{(1)}$ and $\widetilde{\check T}^{(2)}$ the associated $\check T(\complexes)$-gerbes over $*/W_F$. The isomorphism \eqref{eq-torus-sign-cover-dual-identification} applied to $(T, \epsilon)$ and the isomorphism \eqref{eq-torus-linear-cover-duality} applied to $(T, \mu^{(2)})$ yield isomorphisms
\begin{align*}
	\Langlands_T(\widetilde{\check T}^{(1)}) & \xrightarrow{\simeq} \widetilde T_{\zeta}^{(1)}, \\
	\Langlands_T(\widetilde{\check T}^{(2)}) & \xrightarrow{\simeq} \widetilde T_{\zeta}^{(2)}.
\end{align*}

We sum them using the $\integers$-linearity of $\Langlands_T$:
\begin{align*}
\Langlands_T(\widetilde{\check T}) & \xrightarrow{\simeq} \Langlands_T(\widetilde{\check T}^{(1)} + \widetilde{\check T}^{(2)}) \\
& \xrightarrow{\simeq} \widetilde T_{\zeta}^{(1)} + \widetilde T_{\zeta}^{(2)} \xrightarrow{\simeq}  \widetilde T_{\zeta}.
\end{align*}
This is the desired isomorphism \eqref{eq-torus-commutative-cover-duality}.
\end{proof}
\end{void}

\subsection{Duality for the center}
\label{sec-duality-for-center}

\begin{void}
\label{void-duality-for-center-context}
Let $A$ be a finite abelian group of order invertible in $F$, equipped with an injective character $\zeta : A \rightarrow \complexes^{\times}$. Let $G$ be a reductive group $F$-scheme and $\mu : \deloop G_{\abelian} \rightarrow \deloop^4A(1)$ be an $\mathbb E_{\infty}$-monoidal morphism. (The results of this subsection will be applied to the pair $(G^{\sharp}, \mu_{G_{\abelian}^{\sharp}})$ defined by a general rigidified morphism $\deloop G \rightarrow \deloop^4A(1)$, \emph{cf.}~\S\ref{void-metaplectic-dual-data}.) We also use $\mu$ to denote its pullback to $\deloop G$, viewed as a pointed morphism.

Denote by $\widetilde G$ the image of $\mu$ under the construction functor \eqref{eq-construction-of-covers}. Its pullback along $Z(F) \rightarrow G(F)$ is a commutative extension
\begin{equation}
\label{eq-center-commutative-cover}
1 \rightarrow A \rightarrow \widetilde Z \rightarrow Z(F) \rightarrow 1.
\end{equation}

Denote by $\widetilde Z_{\zeta}$ extension of $Z(F)$ by $\complexes^{\times}$ induced from \eqref{eq-center-commutative-cover} along $\zeta$.

In this subsection, we shall construct L-parameters for the set $\Pi(\widetilde Z)$ of $\zeta$-genuine smooth characters of $\widetilde Z$ using Theorem \ref{thm-torus-commutative-cover-duality}.
\end{void}

\begin{void}
\label{void-duality-for-center-galois-data}
Let us first fix notation for data on the Galois side. The dual group $H$ of $(G, \mu)$ is the Langlands dual of $G$ and we have an object $\widetilde Z_H$ of $\cycle^2(W_F, Z_H(\complexes))$.

We also slightly extend the formalism of \S\ref{void-group-cochain-as-space}: Given a pro-group $\Sigma = \lim_{i\in I}\Sigma_i$ and a \emph{complex} of sheaves of abelian groups $\mathscr A$ over $*/\Sigma_i$, we write $\cycle^n(\Sigma, \mathscr A)$ for the $\integers$-linear space underlying
$$
\Gamma(*/\Sigma, \mathscr A[n]) := \colim_{j \in I_{/i}} \Gamma(*/\Sigma_j, \mathscr A[n]).
$$

In other words, $\cycle^n(\Sigma, \mathscr A)$ is the space of \emph{hyper}cocycles of degree $n$. We will still refer to objects of $\cycle^2(\Sigma, \mathscr A)$ as $\mathscr A$-gerbes over $*/\Sigma$.

Denote by $\widetilde H_{\abelian}$ the object of $\cycle^2(W_F, H_{\abelian}(\complexes))$ induced from $\widetilde Z_H$ by functoriality along the map of complexes $Z_H(\complexes) \rightarrow H_{\abelian}(\complexes)$. Denote by $\Phi(\widetilde H_{\abelian})$ the set of isomorphism classes of trivializations of $\widetilde H_{\abelian}$.
\end{void}

\begin{rem}
Note that the total space of the $H_{\abelian}(\complexes)$-gerbe $\widetilde H_{\abelian}$ is a pro-space over $*/W_F$ and $\Phi(\widetilde H_{\abelian})$ is the set of isomorphism classes of its sections. In particular, functoriality with respect to $\widetilde H \rightarrow \widetilde H_{\abelian}$ defines a map
\begin{equation}
\label{eq-langlands-parameter-functoriality-cocenter}
\Phi(\widetilde H) \rightarrow \Phi(\widetilde H_{\abelian}).
\end{equation}
\end{rem}

\begin{void}
Denote by $T_{H, \sconn}$ the Langlands dual torus of $T_{\adjoint}$. (It coincides with the maximal torus of $H_{\sconn}$ induced from $T_H$.) The functor \eqref{eq-class-field-theory-functor} for $T$ and $T_{\adjoint}$ fits into a commutative diagram
\begin{equation}
\label{eq-class-field-theory-functor-adjoint-torus}
\begin{tikzcd}[column sep = 1em]
	\cycle^2(W_F, T_{H, \sconn}(\complexes)) \ar[r]\ar[d, "\Langlands_{T_{\adjoint}}"] & \cycle^2(W_F, T_H(\complexes)) \ar[d, "\Langlands_T"] & \\
	\extension^1(T_{\adjoint}(F), \complexes^{\times}) \ar[r] & \extension^1(T(F), \complexes^{\times})
\end{tikzcd}
\end{equation}

Since $H^3(W_F, T_{H, \sconn}(\complexes))$ vanishes, the cofiber of the top row of \eqref{eq-class-field-theory-functor-adjoint-torus} is identified with $\cycle^2(W_F, H_{\abelian}(\complexes))$. Therefore, \eqref{eq-class-field-theory-functor-adjoint-torus} induces a functor
\begin{equation}
\label{eq-class-field-theory-functor-center}
	\Langlands_Z : \cycle^2(W_F, H_{\abelian}(\complexes)) \rightarrow \extension^1(Z(F), \complexes^{\times}).
\end{equation}

Since $\widetilde H_{\abelian}$ comes from an object of $\cycle^2(W_F, T_H(\complexes))$, the isomorphism \eqref{eq-torus-commutative-cover-duality} induces an isomorphism in $\extension^1(Z(F), \complexes^{\times})$:
\begin{equation}
\label{eq-center-commutative-cover-duality}
\Langlands_Z(\widetilde H_{\abelian}) \xrightarrow{\simeq} \widetilde Z_{\zeta}.
\end{equation}
\end{void}

\begin{void}
We shall use \eqref{eq-class-field-theory-functor-center} and \eqref{eq-center-commutative-cover-duality} to construct the local Langlands correspondence for $\widetilde Z$, in analogy with \S\ref{void-torus-commutative-cover-local-langlands-correspondence}:
\begin{equation}
\label{eq-local-langlands-correspondence-sharp-center}
	\LLC : \Pi(\widetilde Z) \xrightarrow{\simeq} \Phi(\widetilde H_{\abelian}).
\end{equation}

Indeed, the functor $\Langlands_Z$ carries trivializations of $\widetilde H_{\abelian}$ to trivializations of $\widetilde Z_{\zeta}$, which are in bijection with $\zeta$-genuine characters of $\widetilde Z$. This map intertwines the $\cycle^1(W_F, H_{\abelian}(\complexes))$-action on trivializations of $\widetilde H_{\abelian}$ with the $\Hom(Z(F), \complexes^{\times})$-action on trivializations of $\widetilde Z_{\zeta}$, via the $\Langlands_Z$-action on loop spaces. We shall argue that the latter induces a bijection on $\pi_0$: Indeed, it occurs as the last vertical arrow in the commutative diagram
$$
\begin{tikzcd}[column sep = 1.5em]
	H^1(W_F, T_{H, \sconn}(\complexes)) \ar[r]\ar[d, "\pi_1\Langlands_{T_{\adjoint}}"] & H^1(W_F, T_H(\complexes)) \ar[r]\ar[d, "\pi_1\Langlands_T"] & H^1(W_F, H_{\abelian}(\complexes)) \ar[r]\ar[d, "\pi_1\Langlands_Z"] & 1 \\
	\Hom(T_{\adjoint}(F), \complexes^{\times}) \ar[r] & \Hom(T(F), \complexes^{\times}) \ar[r] & \Hom(Z(F), \complexes^{\times}) \ar[r] & 1
\end{tikzcd}
$$
Here, the top row is exact because $H^2(W_F, T_{H, \sconn}(\complexes))$ vanishes (\emph{cf.}~\cite[Theorem 3.2.2]{MR3003999}) and the bottom row is exact because $\complexes^{\times}$ is divisible. Since $\pi_1\Langlands_{T_{\adjoint}}$ and $\pi_1\Langlands_T$ are isomorphisms (\emph{cf.}~Remark \ref{rem-class-field-theory-functor-loop}), so is $\pi_1\Langlands_Z$.

It follows that the action of $\Langlands_Z$ on trivializations of $\widetilde H_{\abelian}$ defines a bijection
\begin{equation}
\label{eq-sharp-center-map-on-trivializations}
\Phi(\widetilde H_{\abelian}) \xrightarrow{\simeq} \Pi(\widetilde Z).
\end{equation}
We define \eqref{eq-local-langlands-correspondence-sharp-center} to be the inverse to \eqref{eq-sharp-center-map-on-trivializations}.
\end{void}

\subsection{Duality for the cocenter}
\label{sec-duality-for-the-cocenter}

\begin{void}
We keep the notation of \S\ref{void-duality-for-center-context} and \S\ref{void-duality-for-center-galois-data}. The goal of this subsection is to address the following question: When does $\widetilde G$ admit a $\zeta$-genuine character?

We shall only answer this question when $G$ is quasi-split. For the remainder of this subsection, we fix a Borel subgroup $B$ and a section of the projection $B \rightarrow T$, realizing $T$ as a subgroup of $G$.

The results of this subsection will not be used in the sequel. We include them because they support the philosophy that the local Langlands correspondence for covers defined by $\mathbb E_{\infty}$-monoidal morphisms $\mu : \deloop G_{\abelian} \rightarrow \deloop^4A(1)$ is ``not too far'' from the local Langlands correspondence for linear algebraic groups. The case where $\mu$ is $\integers$-linear is due to Kaletha (\emph{cf.}~\cite[\S2]{kaletha2022}) and no new idea is needed to treat the $\mathbb E_{\infty}$-monoidal case.
\end{void}

\begin{void}
Under the split fiber sequence \eqref{eq-sharp-cocenter-fiber-sequence} (\emph{cf.}~\S\ref{void-sharp-cocenter-fiber-sequence-splitting}), the $\mathbb E_{\infty}$-monoidal morphism $\mu$ defines a character $\pi_1 G \rightarrow \integers/2$ (trivial unless $|A|$ is even) and $\integers$-linear morphism $\pi_1 G \rightarrow \deloop^2A$.

Denote by $\pi_1^t G$ the torsion subgroup of $\pi_1G$. We restrict the two maps above to $\pi_1^t G$ and apply one unit of Tate twist. This gives us two $\integers$-linear maps
\begin{align*}
	\epsilon : \pi_1^t G(1) &\rightarrow \mu_2,\\
	f : \pi_1^t G(1) &\rightarrow \deloop^2A(1).
\end{align*}

For any section $\mathscr G$ of $\deloop^2A(1)$ over $\Spec F$, we denote by $[\mathscr G] \in A$ the image of its isomorphism class under Tate duality $H^2(\Spec F, A(1)) \cong A$. Write $\{\cdot, \cdot\}$ for the quadratic Hilbert symbol---we view it as valued in the subgroup $\{\pm 1\} \subset A$ if $|A|$ is even and trivial if $|A|$ is odd. Then we may form the character
\begin{equation}
\label{eq-sharp-cover-genuine-character-obstruction}
\pi_1^t G(1)(F) \rightarrow A, \quad \theta \mapsto \{\epsilon(\theta), \epsilon(\theta)\} \cdot [f(\theta)].
\end{equation}

The following result is an analogue of \cite[Proposition 2.4.7]{kaletha2022}.
\end{void}

\begin{prop}
\label{prop-genuine-character-obstruction}
The following statements are equivalent.
\begin{enumerate}
	\item the class of $\widetilde Z_H$ in $H^2(W_F, Z_H(\complexes))$ vanishes;
	\item the homomorphism \eqref{eq-sharp-cover-genuine-character-obstruction} vanishes;
	\item $\widetilde G$ admits a $\zeta$-genuine character.
\end{enumerate}
\end{prop}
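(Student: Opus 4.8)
The plan is to prove $(1)\Leftrightarrow(2)$ by a duality computation on the Galois side and $(1)\Leftrightarrow(3)$ by combining Theorem~\ref{thm-torus-commutative-cover-duality} for the tori $T$ and $T_{\sconn}$ with Weissman's description of genuine characters of $\widetilde G$ in terms of $\widetilde T$.

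For $(1)\Leftrightarrow(2)$ I would first cut the ambient group down to a component group. Since the identity component $Z_H^{\circ}$ is the Langlands dual of the maximal torus quotient of $G$, the cohomology $H^i(W_F,Z_H^{\circ}(\complexes))$ vanishes for $i\ge 2$ (\emph{cf.}~\cite[Theorem 3.2.2]{MR3003999}), so the projection $Z_H(\complexes)\to\pi_0Z_H(\complexes)$ induces an isomorphism on $H^2(W_F,-)$, with target a finite group. It then remains to match the class $[\widetilde Z_H]$, written via \S\ref{void-metaplectic-dual-etale-gerbes} as $[\widetilde Z_H^{(1)}]+[\widetilde Z_H^{(2)}]$, with the character \eqref{eq-sharp-cover-genuine-character-obstruction} of $\pi_1^tG(1)(F)$. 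This I would do by pairing each summand against $\theta\in\pi_1^tG(1)(F)$: the $\integers$-linear summand $\widetilde Z_H^{(2)}$, built from $f$ post-composed with $\zeta$, pairs to $[f(\theta)]$ directly from the definition of local Tate duality (the Tate twist on $A(1)$ being what makes $[f(\theta)]\in A$ meaningful); and the summand $\widetilde Z_H^{(1)}$, induced from the meta-Weil group $\widetilde W_F$ along $\epsilon^{\vee}$, pairs to $\{\epsilon(\theta),\epsilon(\theta)\}$ --- here Corollary~\ref{cor-sign-cover-dual-identification} and the explicit diagonal section $\tau\colon a\mapsto(a^2,\{a,a\})$ of Proposition~\ref{prop-sign-cover-identification} enter, and the relevant Tate twists come from the Kummer map implicit in those statements, together with the fact that $\pi_1\Langlands_{\mathbb G_m}$ is Artin reciprocity (Remark~\ref{rem-class-field-theory-functor-loop}). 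Summing the two contributions yields \eqref{eq-sharp-cover-genuine-character-obstruction}, proving $(1)\Leftrightarrow(2)$.

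For $(1)\Leftrightarrow(3)$ I would use that, since $\mu$ is pulled back from $\deloop G_{\abelian}$, its restriction to $\deloop G_{\sconn}$ is canonically trivial and its bilinear form on $\Lambda$ vanishes; hence $\widetilde G$ is a commutative cover whose pushout $\widetilde G_{\zeta}$ along $\zeta$ is canonically split over the image of $G_{\sconn}(F)$ in $G(F)$. Using that $G$ is quasi-split --- so that $G(F)$ is generated by $T(F)$ together with the root groups, and that the canonical splitting over $G_{\sconn}(F)$ is conjugation-equivariant --- a $\zeta$-genuine character of $\widetilde G$ becomes the same datum as a $\zeta$-genuine character of $\widetilde T$ that is invariant under the abstract Weyl group and restricts to the canonical splitting over $T_{\sconn}(F)$. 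Applying Theorem~\ref{thm-torus-commutative-cover-duality} to $T$ and $T_{\sconn}$, such characters correspond to trivializations of the $\check T(\complexes)$-gerbe $\widetilde{\check T}$ that are Weyl-invariant and compatible with a trivialization at the simply connected level; and by the factorizations through $\pi_1G$ underlying the decomposition \eqref{eq-sharp-cover-decomposition}, $\widetilde{\check T}$ is the pushforward of $\widetilde Z_H$ along the inclusion of the center $Z_H\hookrightarrow\check T$, so that precisely the Weyl-invariant, compatible trivializations of $\widetilde{\check T}$ descend to $\widetilde Z_H$ ($W$ acting trivially on $Z_H$). Therefore such a trivialization exists if and only if $[\widetilde Z_H]=0$, which is $(1)$.

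The main obstacle is the $2$-torsion bookkeeping, which surfaces in the same guise in both halves: in $(1)\Leftrightarrow(2)$ one must thread Corollary~\ref{cor-sign-cover-dual-identification} and the section $\tau(a)=(a^2,\{a,a\})$ through local Tate duality so as to land exactly on $\{\epsilon(\theta),\epsilon(\theta)\}$ rather than on some quadratic twist of it, while in $(1)\Leftrightarrow(3)$ one must verify that the genuine-character torsors on the two sides are matched $W$-equivariantly, so that the obstruction is the class of $\widetilde Z_H$ and not of one of its summands or of $\widetilde{\check T}$. This is the interchange under Langlands duality of Gaitsgory--Lysenko's sign gerbe and Weissman's meta-Galois group, reappearing here as the Hilbert symbol; granting Corollary~\ref{cor-sign-cover-dual-identification} and the formalism of \S\ref{sec-duality-for-center}, no idea beyond Kaletha's is needed.
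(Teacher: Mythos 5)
Your treatment of $(1)\Leftrightarrow(2)$ is essentially sound and close in substance to the paper's: the paper also reduces to a finite duality statement, but it routes the computation through the cover side rather than pairing cocycles directly --- it shows (Lemma~\ref{lem-cocenter-cover-automorphism}) that the splitting character of $\widetilde G_{\abelian}$ on $\pi_1^tG(1)(F)$ is exactly $\theta\mapsto\{\epsilon(\theta),\epsilon(\theta)\}\cdot[f(\theta)]$, and then transports via the duality isomorphism $\Langlands_{G_{\abelian}}(\widetilde Z_H)\simeq\widetilde G_{\abelian,\zeta}$ of \eqref{eq-cocenter-commutative-cover-duality}, using that $\Langlands_{G_{\abelian}}$ induces an isomorphism $H^2(W_F,Z_H(\complexes))\simeq\Hom(\pi_1^tG(1)(F),\complexes^{\times})$. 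Your direct Tate-duality pairing of $[\widetilde Z_H^{(1)}]+[\widetilde Z_H^{(2)}]$ against $\pi_1^tG(1)(F)$ is the same computation unwound, and the ingredients you cite (Proposition~\ref{prop-sign-cover-identification}, Corollary~\ref{cor-sign-cover-dual-identification}, Artin reciprocity) are the right ones.

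The gap is in $(1)\Leftrightarrow(3)$, specifically in the direction $(3)\Rightarrow(1)$. You assert that a $\zeta$-genuine character of $\widetilde G$ ``is the same datum as'' a genuine character of $\widetilde T$ that restricts to the canonical splitting over $T_{\sconn}(F)$, but nothing in your argument forces a genuine character of $\widetilde G$ to be trivial on the canonical splitting $G_{\sconn}(F)\rightarrow\widetilde G$ (equivalently on its restriction to $T_{\sconn}(F)$, or even just to $\Ker(G_{\sconn}(F)\rightarrow G(F))\cong\pi_1^tG(1)(F)$, which is where the obstruction character lives). Quasi-splitness giving generation of $G(F)$ by $T(F)$ and root groups does not supply this; the paper's proof uses precisely here that $G_{\sconn}(F)$ is \emph{perfect} (Platonov's theorem), so that any character of $\widetilde G$ kills the canonical section of $G_{\sconn}(F)$ and hence its restriction to the kernel is the character \eqref{eq-sharp-cover-genuine-character-obstruction}. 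Without this (or an equivalent Kneser--Tits-type argument), condition (3) could a priori hold while \eqref{eq-sharp-cover-genuine-character-obstruction} is nonzero. Separately, your reduction to \emph{Weyl-invariant} characters of $\widetilde T$ and Weyl-invariant trivializations of $\widetilde{\check T}$ is both unestablished (no $W$-equivariance of Theorem~\ref{thmx-duality-sharp-torus}/Theorem~\ref{thm-torus-commutative-cover-duality} or of the gerbes is available, and descent along $Z_H\hookrightarrow T_H$ is governed by compatibility with the trivialization at the adjoint level via the fiber sequence defining $\Langlands_{G_{\abelian}}$, not by Weyl invariance) and unnecessary: the paper avoids the Weyl group entirely by working with $G_{\abelian}(F)\simeq G(F)/G_{\sconn}(F)$ (using $H^1(\Spec F,T_{\sconn})=0$ from quasi-splitness) and the extension $\widetilde G_{\abelian,\zeta}$, whose splitting immediately produces, and is obstructed by, the data in (3) and (2).
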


\begin{void}
Let us begin with an elementary observation: Quasi-splitness of $G$ implies that $T_{\sconn}$ is the Weil restriction of a split torus, so $H^1(\Spec F, T_{\sconn})$ vanishes. The groupoid of $F$-points of the cocenter $G_{\abelian}$ can thus be identified as
\begin{equation}
\label{eq-cocenter-quasisplit-field-valued-points}
G_{\abelian}(F) \xrightarrow{\simeq} T(F)/T_{\sconn}(F) \xrightarrow{\simeq} G(F)/G_{\sconn}(F),
\end{equation}
where the quotients are taken in the sense of groupoids.

In particular, the $\pi_1$ of $G_{\abelian}(F)$ is identified with $\pi_1^t G(1)(F)$, while its $\pi_0$ is identified with the cokernel of $T_{\sconn}(F) \rightarrow T(F)$, as well as the cokernel of $G_{\sconn}(F) \rightarrow G(F)$.
\end{void}

\begin{void}
\label{void-cocenter-cover-automorphism}
Let us write $\extension^1(G_{\abelian}(F), A)$ for the fiber of
$$
\extension^1(T(F), A) \rightarrow \extension^1(T_{\sconn}(F), A),
$$
so an object $\widetilde G_{\abelian}$ of $\extension^1(G_{\abelian}(F), A)$ can be thought of as a commutative extension $\widetilde T$ of $T(F)$ equipped with a splitting over $T_{\sconn}(F)$. (The case where $G_{\abelian}$ is replaced by $\deloop\{\pm 1\}$ has already appeared in \S\ref{void-sign-cover-meta-weil-group}.)

The $\mathbb E_{\infty}$-monoidal morphism $\mu$ defines an object $\widetilde G_{\abelian}$ of $\extension^1(G_{\abelian}(F), A)$. Restricting its splitting $T_{\sconn}(F) \rightarrow \widetilde T$ to $\pi_1^t G(1)(F)$, we obtain a character
\begin{equation}
\label{eq-cocenter-cover-automorphism}
\pi_1^tG(1)(F) \rightarrow A,
\end{equation}
which vanishes if and only if $\widetilde G_{\abelian}$ is the pullback of a commutative extension of $\pi_0(G_{\abelian}(F))$ by $A$. Let us calculate \eqref{eq-cocenter-cover-automorphism}.
\end{void}

\begin{lem}
\label{lem-cocenter-cover-automorphism}
The character \eqref{eq-cocenter-cover-automorphism} equals \eqref{eq-sharp-cover-genuine-character-obstruction}.
\end{lem}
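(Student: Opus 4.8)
The plan is to reduce to the canonical decomposition $\mu \xrightarrow{\simeq} \mu^{(1)} + \mu^{(2)}$ of \eqref{eq-sharp-cover-decomposition} over $\Spec F$, where $\mu^{(1)}$ is the sign cover attached to $\epsilon : \pi_1 G \to \integers/2$ via \eqref{eq-sign-cover-composition-with-character} and $\mu^{(2)}$ is $\integers$-linear. Both characters in the statement are additive in $\mu$: the left-hand side because $\int_F$ and the passage to the canonical splitting over $T_{\sconn}(F)$ --- which exists because $\deloop T_{\sconn} \to \deloop G_{\abelian}$ is the zero morphism --- are $\integers$-linear; the right-hand side because $\theta \mapsto [f(\theta)]$ is additive in $f$ and $\theta \mapsto \{\epsilon(\theta), \epsilon(\theta)\}$ is additive in $\epsilon$, the quadratic Hilbert symbol being symmetric and bilinear. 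Since $\epsilon$ depends only on $\mu^{(1)}$ and $f$ only on $\mu^{(2)}$, it suffices to prove the lemma when $\mu = \mu^{(2)}$ and when $\mu = \mu^{(1)}$.

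Suppose first $\mu = \mu^{(2)}$ is $\integers$-linear, so $\epsilon = 0$ and the right-hand side is $\theta \mapsto [f(\theta)]$. Using $G_{\abelian}(F) \simeq T(F)/T_{\sconn}(F)$ (\emph{cf.}~\eqref{eq-cocenter-quasisplit-field-valued-points}) we may view $\widetilde G_{\abelian}$ as a monoidal functor $G_{\abelian}(F) \to */A$; unwinding the definitions, its action on $\pi_1 G_{\abelian}(F) = \pi_1^t G(1)(F)$ --- the automorphism group of the unit object --- is precisely the character \eqref{eq-cocenter-cover-automorphism}. I would read off this action from the construction of $\int_F$: looping $\mu$ once gives $G_{\abelian} \to \deloop^3 A(1)$, and evaluating at $\Spec F$, where $H^3(\Spec F, A(1)) = 0$ and $H^2(\Spec F, A(1)) \cong A$ by Tate duality so that $\Gamma(\Spec F, \deloop^3 A(1)) \simeq */A$, realizes that monoidal functor. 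The induced map on $\pi_1$ is then computed, via \eqref{eq-torus-linear-cover-classification}, on the cyclic summands of $\pi_1 G$, where $\mu$ is a cup product against a Kummer class; identifying it with $\theta \mapsto [f(\theta)]$ is exactly the cup-product/Tate-duality bookkeeping already done in \S\ref{sec-torus-linear-cover} (\emph{cf.}~the commutativity of \eqref{eq-artin-symbol-via-kummer}).

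Now suppose $\mu = \mu^{(1)}$, so $f = 0$ and the right-hand side is $\theta \mapsto \{\epsilon(\theta), \epsilon(\theta)\}$. By \eqref{eq-sign-cover-composition-with-character}, $\mu^{(1)}$ factors through $\deloop G_{\abelian} \xrightarrow{\epsilon \otimes \deloop\Psi} \deloop^2\{\pm 1\} \xrightarrow{\sgn} \deloop^4\{\pm 1\}^{\otimes 2} \to \deloop^4A(1)$, so by functoriality of $\int_F$ the object $\widetilde G_{\abelian}$ is pulled back along $G_{\abelian} \to \deloop\{\pm 1\}$ from the universal sign cover, \emph{i.e.}~from $\int_F(\sgn)$, whose underlying data is $(\widetilde F^{\times}_{\sgn}, \tau)$ as in \eqref{eq-sign-cover}. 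Proposition \ref{prop-sign-cover-identification} identifies this with $\widetilde F^{\times}_{\Hilbert}$ carrying $\tau : a \mapsto (a^2, \{a,a\})$; restricting $\tau$ to $\mathrm{Ker}((\cdot)^2 : F^{\times} \to F^{\times}) = \mu_2(F) = \{\pm 1\}$ gives, under $\widetilde F^{\times}_{\Hilbert} \cong F^{\times} \times \{\pm 1\}$, the character $\theta \mapsto (1, \{\theta, \theta\})$. Thus the $\pi_1$-character of the universal sign cover is $\theta \mapsto \{\theta, \theta\}$, and precomposing with the map $\pi_1^t G(1)(F) \to \mu_2(F)$ induced by $\epsilon \otimes \deloop\Psi$ --- which I expect to be $\epsilon$ itself --- gives $\theta \mapsto \{\epsilon(\theta), \epsilon(\theta)\}$, as required.

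I expect the sign case to be the main obstacle: one must verify that the splitting of $\widetilde T$ over $T_{\sconn}(F)$ built into the $\extension^1(G_{\abelian}(F), A)$-structure restricts on $\pi_1^t G(1)(F) \subseteq T_{\sconn}(F)$ to the restriction of the squaring-splitting $\tau$ of Proposition \ref{prop-sign-cover-identification}, and that the map $\pi_1^t G(1)(F) \to \mu_2(F)$ alluded to above is indeed $\epsilon$. Both reduce to matching the null-homotopy of $\deloop T_{\sconn} \to \deloop G_{\abelian}$ with the trivialization of $\sgn$ over the image of $(\cdot)^2$, and this requires careful tracking of the Tate twists, of the identification $\{\pm 1\} = \mu_2 = \integers/2(1)$, and of the (derived) tensor product $\pi_1 G \otimes \mathbb G_m$ implicit in $G_{\abelian}$ --- precisely where the subtle $2$-torsion phenomenon of the introduction enters.
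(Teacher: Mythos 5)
Your proposal is correct and follows essentially the same route as the paper: decompose $\mu \cong \mu^{(1)} + \mu^{(2)}$ via \eqref{eq-sharp-cover-decomposition}, treat the $\integers$-linear part by looping $\mu$, evaluating at $\Spec F$ and applying Tate duality to read off $\theta \mapsto [f(\theta)]$, and treat the sign part by reducing to the universal cover of \S\ref{void-sign-cover} and invoking Proposition \ref{prop-sign-cover-identification}. The verification you flag as the "main obstacle" in the sign case is exactly what Proposition \ref{prop-sign-cover-identification} supplies (the identification of $\tau$ with $a \mapsto (a^2, \{a,a\})$, restricted to $\mu_2(F)$), which is also how the paper dispatches it.
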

\begin{proof}
We perform the decomposition \eqref{eq-sharp-cover-decomposition}: $\mu \cong \mu^{(1)} + \mu^{(2)}$, where $\mu^{(1)}$ is defined by $\epsilon$ and $\mu^{(2)}$ is $\integers$-linear. The character \eqref{eq-cocenter-cover-automorphism} attached to $\mu^{(1)}$ is given by $\theta\mapsto \{\epsilon(\theta), \epsilon(\theta)\}$ (\emph{cf.}~Proposition \ref{prop-sign-cover-identification}). It remains to identify the character \eqref{eq-cocenter-cover-automorphism} attached to $\mu^{(2)}$ with $\theta \mapsto [f(\theta)]$. Therefore, we may assume that $\mu$ is $\integers$-linear in what follows.

In this case, $\mu$ is the tensor product of $\deloop\Psi : \deloop \mathbb G_m \rightarrow \deloop^2\hat{\integers}(1)$ with the $\integers$-linear morphism $\pi_1 G \rightarrow \deloop^2A$ which defines $f$. Applying the loop space functor to $\mu$ and evaluating at $\Spec F$, we obtain a map of spaces
\begin{equation}
\label{eq-linear-cover-loop-space-evaluation}
G_{\abelian}(F) \rightarrow \Gamma(\Spec F, \deloop^3A(1)).
\end{equation}

The character \eqref{eq-cocenter-cover-automorphism} is obtained from \eqref{eq-linear-cover-loop-space-evaluation} by taking $\pi_1$ and identifying $H^2(\Spec F, A(1))$ with $A$ under Tate duality. This yields the character $\theta \mapsto [f(\theta)]$.
\end{proof}

\begin{void}
Let us consider the induced cover $\widetilde G_{\abelian, \zeta} \in \extension^1(G_{\abelian}(F), \complexes^{\times})$ of $\widetilde G_{\abelian}$ (\emph{cf.}~\S\ref{void-cocenter-cover-automorphism}). Define the functor $\Langlands_{G_{\abelian}}$ by the diagram of fiber sequences
$$
\begin{tikzcd}[column sep = 1em]
	\cycle^2(W_F, Z_H(\complexes)) \ar[r]\ar[d, "\Langlands_{G_{\abelian}}"] & \cycle^2(W_F, T_H(\complexes)) \ar[r]\ar[d, "\Langlands_T"] & \cycle^2(W_F, T_{H, \adjoint}(\complexes)) \ar[d, "\Langlands_{T_{\sconn}}"] \\
	\extension^1(G_{\abelian}(F), \complexes^{\times}) \ar[r] & \extension^1(T(F), \complexes^{\times}) \ar[r] & \extension^1(T_{\sconn}(F), \complexes^{\times})
\end{tikzcd}
$$

The isomorphism \eqref{eq-torus-commutative-cover-duality} for $T$ and $T_{\sconn}$ yields an isomorphism
\begin{equation}
\label{eq-cocenter-commutative-cover-duality}
\Langlands_{G_{\abelian}}(\widetilde Z_H) \xrightarrow{\simeq} \widetilde G_{\abelian, \zeta}.
\end{equation}
\end{void}

\begin{void}
We now prove Proposition \ref{prop-genuine-character-obstruction}.
\begin{proof}[Proof of Proposition \ref{prop-genuine-character-obstruction}]
(1) $\Leftrightarrow$ (2). According to Lemma \ref{lem-cocenter-cover-automorphism}, the commutative extension $\widetilde G_{\abelian, \zeta}$ splits if and only if the character \eqref{eq-sharp-cover-genuine-character-obstruction} vanishes. On the other hand, the functor $\Langlands_{G_{\abelian}}$ induces an isomorphism on the set of isomorphism classes
$$
H^2(W_F, Z_H(\complexes)) \xrightarrow{\simeq} \Hom(\pi_1^tG(1)(F), \complexes^{\times}),
$$
so the equivalence of (1) and (2) follows from \eqref{eq-cocenter-commutative-cover-duality}.

(2) $\Rightarrow$ (3). A splitting of $\widetilde G_{\abelian, \zeta}$ induces a $\zeta$-genuine character of $\widetilde G$.

(3) $\Rightarrow$ (2). Since the cover $\widetilde G$ is induced from $\widetilde G_{\abelian}$, we have a canonical section $G_{\sconn}(F) \rightarrow \widetilde G$, whose restriction to $\pi_1^t G(1)(F) \cong \Ker(G_{\sconn}(F) \rightarrow G(F))$ is the character \eqref{eq-sharp-cover-genuine-character-obstruction} (\emph{cf.}~Lemma \ref{lem-cocenter-cover-automorphism}). If $\widetilde G$ admits a $\zeta$-genuine character, its restriction to $G_{\sconn}(F)$ must vanish because $G_{\sconn}(F)$ is perfect by Platonov's theorem (\emph{cf.}~\cite[\S7.2]{MR1278263}). It follows that the character \eqref{eq-sharp-cover-genuine-character-obstruction} must also vanish.
\end{proof}
\end{void}

\begin{rem}
By Proposition \ref{prop-genuine-character-obstruction}, the obstruction to the existence of a $\zeta$-genuine character of $\widetilde G$ only has to do with the torsion subgroup $\pi_1^t G$ of $\pi_1 G$.

By taking a $z$-extension $G' \rightarrow G$, one can thus find a $\zeta$-genuine character of the induced cover $\widetilde G'$ of $G'(F)$ and effectively reduces the local Langlands correspondence for $\widetilde G$ to that for $G'(F)$. This is explained in \cite[Theorem 2.6.2]{kaletha2022}, so we shall not repeat it.
\end{rem}

\medskip

\section{Structures on $\mu$}

In this section, we work over an arbitrary base scheme $S$ and let $G$ be a reductive group $S$-scheme. We adopt the notation of \S\ref{void-reductive-group-notation} for objects associated to $G$. Let $A$ be a finite abelian group whose order is invertible over $S$.

Fix a rigidified morphism $\mu : \deloop G \rightarrow \deloop^4A(1)$. Write $Q$ for its associated quadratic form and $b$, $b_1$, $b_2$ for the induced pairings (\emph{cf.}~\S\ref{void-etale-metaplectic-cover-fiber-sequence}).

The goal of this section is construct the ``canonical quadratic structure" on $\mu$ with respect to the $\deloop Z$-action on $\deloop G$ (\emph{cf.}~Proposition \ref{prop-canonical-quadratic-structure}). This provides the key technical ingredient in our calculation of Weissman's obstruction in \S\ref{sec-weissman-obstruction}.

\subsection{The canonical quadratic structure}
\label{sec-canonical-quadratic-structure}

\begin{void}
\label{void-center-cocenter-pairing-geometric}
Consider the self-tensor product $\deloop\Psi^{\otimes 2} : \deloop\mathbb G_m \otimes \deloop \mathbb G_m \rightarrow \deloop^4\hat{\integers}(2)$ of the delooped Kummer map $\deloop\Psi$. Tensoring it with the pairing $b_2$ yields a bilinear pairing
\begin{equation}
\label{eq-center-cocenter-pairing-geometric}
b_2\otimes \deloop\Psi^{\otimes 2} : \deloop G_{\abelian} \otimes \deloop Z \rightarrow \deloop^4 A(1).
\end{equation}

We shall use the same notation $b_2\otimes \deloop\Psi^{\otimes 2}$ to denote the pullback of \eqref{eq-center-cocenter-pairing-geometric} to $\deloop G \times \deloop Z$. It is \emph{bi-rigidified} in the sense that it is equipped with trivializations over $e\times \deloop Z$ and $\deloop G \times e$ which are compatible over $e\times e$.
\end{void}

\begin{void}
\label{void-center-action-morphism}
Consider the morphisms $p_1$, $p_2$, $a$ in the diagram
\begin{equation}
\label{eq-center-action-diagram}
\begin{tikzcd}[column sep = 0em]
& \deloop G \times \deloop Z \ar[rr, "a"]\ar[dl, swap, "p_1"]\ar[dr, "p_2"] & & \deloop G \\
\deloop G & & \deloop Z
\end{tikzcd}
\end{equation}
which are, respectively, projections onto the first and the second factors and the action map. Denote by $\mu_Z$ the restriction of $\mu$ to $\deloop Z$.
\end{void}

\begin{prop}
\label{prop-canonical-quadratic-structure}
In reference to \eqref{eq-center-action-diagram}, there is a canonical isomorphism of bi-rigidified morphisms $\deloop G \times \deloop Z \rightarrow \deloop^4A(1)$:
\begin{equation}
\label{eq-canonical-quadratic-structure}
	a^*\mu - (p_1)^*\mu - (p_2)^*\mu_Z \xrightarrow{\simeq} b_2\otimes \deloop\Psi^{\otimes 2}.
\end{equation}
\end{prop}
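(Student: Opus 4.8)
The plan is to reduce the statement to the known fiber sequence \eqref{eq-etale-metaplectic-cover-fiber-sequence} by analyzing the left-hand side of \eqref{eq-canonical-quadratic-structure} as a bi-rigidified morphism, and to identify it via its effect on the associated quadratic/bilinear data. First I would observe that the expression $a^*\mu - p_1^*\mu - p_2^*\mu_Z$ is automatically bi-rigidified: restricting along $\deloop G \times e$ gives $a^*\mu|_{\deloop G\times e} - p_1^*\mu|_{\deloop G\times e} - p_2^*\mu_Z|_{\deloop G\times e}$, and since $a$ restricted to $\deloop G \times e$ is the identity on $\deloop G$ while $p_1$ restricted there is also the identity, the first two terms cancel, and the third is the constant $\mu_Z(e) \simeq 0$; symmetrically along $e\times \deloop Z$ the first and third terms cancel and $p_1^*\mu|_{e\times\deloop Z} \simeq \mu(e) \simeq 0$. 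So both sides of \eqref{eq-canonical-quadratic-structure} live in the space of bi-rigidified morphisms $\deloop G \times \deloop Z \rightarrow \deloop^4 A(1)$, equivalently (by the usual bar-construction / loop-space argument, or by an Eilenberg--Zilber-type decomposition of $\deloop G \times \deloop Z$) in a space controlled by a fiber sequence over $S$ whose base term records an appropriate "bilinear" invariant valued in $\SHom(\pi_1 G \otimes \Lambda, A(-1))$ — here using that $\deloop Z$ is a commutative group stack with $\pi_1 Z$ the sheaf $\Fib(\Lambda \rightarrow \Lambda_{\adjoint})$ and $\deloop Z \to \deloop^2\pi_0(Z)$-type truncations vanish on the relevant cohomology.

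The heart of the argument is then a computation of this bilinear invariant for the left-hand side. I would use the description of $\mu$ up to the $\integers$-linear correction as being pulled back, étale-locally, from $\deloop T$ via a choice of Borel (as in the proof of Proposition \ref{prop-sharp-center-symmetric-monoidal}), so that the "polarization" $a^*\mu - p_1^*\mu - p_2^*\mu_Z$ is computed from the quadratic form $Q$ on $\Lambda$: on a torus, the failure of $\mu_T$ to be additive under the multiplication $\deloop T \times \deloop T \to \deloop T$ is precisely the bilinear form $b$ tensored with $\deloop\Psi^{\otimes 2}$ (this is the standard "polarization identity" for metaplectic covers of tori, e.g.\ \cite[\S4]{zhao2022metaplectic}). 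Restricting the second factor from $\deloop T$ to $\deloop Z$, the form $b$ restricted to $\Lambda \otimes \pi_1 Z \subset \Lambda\otimes\Lambda$ descends, by the very construction of $b_2$ via the commuting square \eqref{eq-strict-weyl-invariant-form-extension} and the passage to fibers \eqref{eq-center-cocenter-pairing-adjoint}, to the pairing $b_2 : \pi_1 G \otimes \Fib(\Lambda\to\Lambda_{\adjoint}) \to A(-1)$ — the key point being that the ambiguity in $b$ coming from $\Lambda_{\sconn}$ is killed upon projecting the first factor to $\pi_1 G = \Lambda/\Lambda_{\sconn}$, which is exactly why $b_2$ is well-defined. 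Hence the bilinear invariant of the left-hand side is $b_2$, matching that of $b_2 \otimes \deloop\Psi^{\otimes 2}$ on the right.

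Having matched invariants, I would promote this to a genuine isomorphism: the relevant fiber sequence for bi-rigidified morphisms has fiber term $\SHom_\integers(\pi_1 G \otimes \pi_1 Z, \deloop^2 A)$ or similar, so any two bi-rigidified morphisms with the same bilinear invariant differ by a $\integers$-linear bilinear morphism, and I must check this difference vanishes canonically. Here I would argue that the left-hand side, being built from $\mu$ and its restrictions in a way that is manifestly functorial and compatible with the $\integers$-linear part of the fiber sequence \eqref{eq-etale-metaplectic-cover-fiber-sequence}, has no $\integers$-linear contribution beyond what $b_2 \otimes \deloop\Psi^{\otimes 2}$ already accounts for — essentially because the $\integers$-linear part of $\mu$ is pulled back from $\deloop G_{\abelian}$, on which the action of $\deloop Z$ factors through translation by the image of $\pi_1 Z$ in $\pi_1 G$, making the polarization of that part purely bilinear and captured by the linear refinement of $b_2$. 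I expect \textbf{the main obstacle} to be precisely this last step: pinning down the fiber sequence governing \emph{bi}-rigidified (as opposed to singly rigidified) morphisms out of a product $\deloop G \times \deloop Z$, and checking that the canonical isomorphism one writes down is independent of the auxiliary Borel used to trivialize $\mu_{T}$ — the independence argument should follow the pattern of \cite[\S5.2.6]{zhao2022metaplectic}, but the bookkeeping with two variables and the derived tensor structure on $\Fib(\Lambda \to \Lambda_{\adjoint})$ will require care.
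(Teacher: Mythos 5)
Your skeleton is the same as the paper's: reduce to split $G$ with a chosen Killing pair, restrict to $\deloop T\times\deloop Z$, invoke the torus polarization identity $m^*\mu - (p_1)^*\mu - (p_2)^*\mu \simeq b\otimes\deloop\Psi^{\otimes 2}$ (\cite[Proposition 4.7.3]{zhao2022metaplectic}), then descend to $\deloop G\times\deloop Z$ and handle general $G$ by Borel-independence and \'etale descent. But there is a genuine gap exactly at the step you dispose of with ``by the very construction of $b_2$''. The left-hand side of \eqref{eq-canonical-quadratic-structure}, being bi-rigidified on $\deloop G\times\deloop Z$, carries a \emph{canonical} trivialization over $\deloop G_{\sconn}\times\deloop Z$ (equivalently, a trivialization $\tau$ over $\deloop T_{\sconn}\times\deloop Z$, which geometrically encodes the $G_{\adjoint}$-equivariance of $\mu|_{\deloop G_{\sconn}}$), while the right-hand side $b\otimes\deloop\Psi^{\otimes 2}$ restricted to $\deloop T_{\sconn}\times\deloop Z$ is trivialized via the extension $b_1$ of $b|_{\Lambda_{\sconn}\otimes\Lambda}$ to $\Lambda_{\sconn}\otimes\Lambda_{\adjoint}$ furnished by strict Weyl-invariance. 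Producing $b_2$ requires showing these two trivializations agree: your remark that ``the ambiguity from $\Lambda_{\sconn}$ is killed upon projecting to $\pi_1 G$'' only covers the pairing against the connected center (where indeed $b(\alpha,\lambda)=Q(\alpha)\langle\check\alpha,\lambda\rangle=0$ for central $\lambda$), but the pairing of $\pi_1 G$ against the finite part of $Z$ — i.e.\ the part of $\Fib(\Lambda\to\Lambda_{\adjoint})$ coming from the cokernel — is not obtained by restricting $b$ at all; it genuinely uses $b_1$ and the commuting square \eqref{eq-strict-weyl-invariant-form-extension}. This compatibility is the paper's Lemma \ref{lem-sconn-center-compatibility}, proved by a faithfulness argument for pullback along $\deloop T_{\sconn}\times T_{\adjoint}\rightarrow\deloop T_{\sconn}\times\deloop Z$ and, in the end, by \cite[Proposition 5.5.4]{zhao2022metaplectic}; it is a real geometric input, not formal bookkeeping.

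A second, lesser issue: your concluding strategy (match a ``bilinear invariant'', then argue the residual $\integers$-linear difference vanishes) would at best show the two sides are isomorphic; it does not by itself produce a \emph{canonical} isomorphism, which is what the statement asserts and what the cocycle conditions of \S\ref{void-canonical-quadratic-structure-cocycle-condition} and the later applications require. The paper avoids this by classifying bi-rigidified morphisms by monoidal morphisms $Z\rightarrow\SHom_{\integers}(\pi_1G,\deloop A)$ (a $1$-groupoid) and constructing the identification with the adjoint of $b_2\otimes\Psi$ directly as a morphism of fiber sequences \eqref{eq-quadratic-structure-fiber-sequence-map}, with $Z=\Fib(T\rightarrow T_{\adjoint})$ matched against $\Fib(\SHom_{\integers}(\Lambda,\deloop A)\rightarrow\SHom_{\integers}(\Lambda_{\sconn},\deloop A))$; Borel-independence then follows because the torus-level isomorphism \eqref{eq-quadratic-structure-torus-center} lives in a discrete space of bilinear pairings. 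If you incorporate the analogue of Lemma \ref{lem-sconn-center-compatibility} and replace the invariant-matching step by this fiber-sequence construction, your argument becomes the paper's proof.
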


\begin{void}
\label{void-bi-rigidified-morphism-one-truncated}
The proof of Proposition \ref{prop-canonical-quadratic-structure} will appear in \S\ref{void-proof-prop-canonical-quadratic-structure}. Let us make some preliminary remarks about its statement.

First, \eqref{eq-canonical-quadratic-structure} is supposed to be an isomorphism in a $1$-groupoid. Namely, the space of bi-rigidified morphisms $\deloop G\times \deloop Z \rightarrow \deloop^4A(1)$ is $1$-truncated.

To see this, we note that the space of bi-rigidified morphisms $\deloop G \times \deloop Z \rightarrow \deloop^4A(1)$ is equivalent to that of pointed morphisms $\deloop Z \rightarrow \SMaps_e(\deloop G, \deloop^4A(1))$. Because the third term in \eqref{eq-etale-metaplectic-cover-fiber-sequence} is discrete, such morphisms factor through $\SHom_{\integers}(\pi_1 G, \deloop^2A)$, so they correspond to monoidal morphisms
$$
Z \rightarrow \SHom_{\integers}(\pi_1G, \deloop A),
$$
which form a $1$-groupoid. (Moreover, this shows that any bi-rigidified morphism $\deloop G_{\sconn} \times \deloop Z \rightarrow \deloop^4A(1)$ is canonically trivial.)
\end{void}

\begin{void}
\label{void-canonical-quadratic-structure-cocycle-condition}
Next, we shall state a cocycle condition satisfied by \eqref{eq-canonical-quadratic-structure}. Given a $G$-torsor $\mathscr E$ and a $Z$-torsor $\mathscr Z$ over an $S$-scheme, \eqref{eq-canonical-quadratic-structure} supplies a functorial isomorphism of sections of $\deloop^4A(1)$
\begin{equation}
\label{eq-canonical-quadratic-structure-explicit}
\mu(\mathscr E\otimes \mathscr Z) - \mu(\mathscr E) - \mu(\mathscr Z) \xrightarrow{\simeq} (b_2\otimes \deloop\Psi^{\otimes 2})(\mathscr E, \mathscr Z).
\end{equation}

Furthermore, the isomorphism \eqref{eq-canonical-quadratic-structure-explicit} is compatible with the natural trivializations of the two sides, when either $\mathscr E$ or $\mathscr Z$ is the trivial torsor.

Now, given $\mathscr E$ along with two $Z$-torsors $\mathscr Z_1$, $\mathscr Z_2$ over an $S$-scheme, there are two isomorphisms between $\mu(\mathscr E\otimes\mathscr Z_1 \otimes\mathscr Z_2)$ and
\begin{align*}
\mu(\mathscr E) + \mu(\mathscr Z_1) &+ \mu(\mathscr Z_2) \\
&+ (b_2\otimes \deloop\Psi^{\otimes 2})(\mathscr E, \mathscr Z_1) + (b_2 \otimes \deloop\Psi^{\otimes 2})(\mathscr E, \mathscr Z_2) + (b_2\otimes \deloop\Psi^{\otimes 2})(\mathscr Z_1, \mathscr Z_2),
\end{align*}
given by iteratively applying \eqref{eq-canonical-quadratic-structure-explicit} in different orders. The cocycle condition states that these two isomorphisms are canonically identified. (We omit drawing this rather large commutative diagram.)

Note that this is indeed a \emph{condition} and not additional structure, because the space of pointed morphisms $\deloop Z \times \deloop Z \rightarrow \SMaps_e(\deloop G, \deloop^4A(1))$ is $1$-truncated (\emph{cf.}~\S\ref{void-bi-rigidified-morphism-one-truncated}).
\end{void}

\subsection{Construction of \eqref{eq-canonical-quadratic-structure}}
\label{sec-quadratic-structure-construction}

\begin{void}
We shall first construct \eqref{eq-canonical-quadratic-structure} in the case where $G$ is split and equipped with a Killing pair $T \subset B\subset G$.

Recall that any bi-rigidified morphism $\deloop G \times \deloop Z \rightarrow \deloop^4A(1)$ is canonically trivialized as such over $\deloop G_{\sconn} \times \deloop Z$ (\emph{cf.}~\S\ref{void-bi-rigidified-morphism-one-truncated}).

By restrictions along $\deloop T \rightarrow \deloop G$ and $\deloop T_{\sconn} \rightarrow \deloop G_{\sconn}$, the bi-rigidified morphism $a^*\mu - (p_1)^*\mu - (p_2)^*\mu_{Z_G}$ defines a bi-rigidified morphism
\begin{equation}
\label{eq-quadratic-form-torus-center}
\deloop T \times \deloop Z \rightarrow \deloop^4A(1),
\end{equation}
equipped with a trivialization $\tau$ as such over $\deloop T_{\sconn} \times \deloop Z$. 
\end{void}

\begin{void}
The bi-rigidified morphism \eqref{eq-quadratic-form-torus-center} extends to the bi-rigidified morphism
\begin{equation}
\label{eq-bi-rigidified-morphism-torus}
m^*\mu - (p_1)^*\mu - (p_2)^*\mu : \deloop T \times \deloop T \rightarrow \deloop^4A(1),
\end{equation}
where $m$, $p_1$, $p_2$ are the multiplication and projection morphisms from $\deloop T\times\deloop T$ to $\deloop T$.

By \cite[Proposition 4.7.3]{zhao2022metaplectic}, the bi-rigidified morphism \eqref{eq-bi-rigidified-morphism-torus} is identified with $b\otimes \deloop\Psi^{\otimes 2}$, where $b$ is the symmetric form attached to $Q$. By restricting to $\deloop T\times \deloop Z$, we obtain an isomorphism of bi-rigidified morphisms
\begin{equation}
\label{eq-quadratic-structure-torus-center}
a^*\mu - (p_1)^*\mu - (p_2)^*\mu_Z \xrightarrow{\simeq} b\otimes \deloop\Psi^{\otimes 2}
\end{equation}
from $\deloop T\times \deloop Z$ to $\deloop^4A(1)$.
\end{void}

\begin{void}
Since the restriction of $b$ to $\Lambda_{\sconn}\otimes \Lambda$ extends to $\Lambda_{\sconn} \otimes \Lambda_{\adjoint}$ as the bilinear pairing $b_1$, the restriction of $b\otimes \deloop\Psi^{\otimes 2}$ to $\deloop T_{\sconn} \times \deloop T$ likewise extends to $\deloop T_{\sconn} \times \deloop T_{\adjoint}$ as a bi-rigidified morphism. This endows $b\otimes \deloop\Psi^{\otimes 2}$ with a trivialization $\tau_1$ over $\deloop T_{\sconn} \times \deloop Z$.

We shall prove that the trivializations $\tau$, $\tau_1$ are intertwined by the isomorphism \eqref{eq-quadratic-structure-torus-center}. More precisely, consider the diagram of bi-rigidified morphisms $\deloop T_{\sconn} \times \deloop Z \rightarrow \deloop^4A(1)$:
\begin{equation}
\label{eq-sconn-center-compatibility}
\begin{tikzcd}[column sep = -4em, row sep = 1em]
	a^*\mu - (p_1)^*\mu - (p_2)^*\mu_Z|_{\deloop T_{\sconn}\times \deloop Z} \ar[dr, "\tau"]\ar[dd, swap, "\eqref{eq-quadratic-structure-torus-center}"] & \\
	& 0 \\
	b\otimes \deloop\Psi^{\otimes 2}|_{\deloop T_{\sconn} \times \deloop Z} \ar[ur, swap, "\tau_1"]
\end{tikzcd}
\end{equation}
\end{void}

\begin{lem}
\label{lem-sconn-center-compatibility}
The diagram \eqref{eq-sconn-center-compatibility} commutes.
\end{lem}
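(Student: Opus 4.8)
The plan is to verify $\tau = \tau_1$ by reducing to a rank-one computation along the simple coroots. Recall from \S\ref{void-bi-rigidified-morphism-one-truncated} that the bi-rigidified morphisms $\deloop T_{\sconn}\times\deloop Z\to\deloop^4A(1)$ are the same as monoidal morphisms $Z\to\SHom_{\integers}(\Lambda_{\sconn},\deloop A)$, so two trivializations of a fixed such morphism (when both exist) differ by an element of $\Hom(Z,\SHom(\Lambda_{\sconn},A))$ in the fppf sense. Since the sheaf $\Delta$ of simple coroots is \'etale-locally a basis of $\Lambda_{\sconn}$, restriction along the $\alpha\in\Delta$ identifies $\SHom(\Lambda_{\sconn},A)$ with $\prod_{\alpha\in\Delta}A$; hence it suffices to show that $\tau$ and $\tau_1$ agree after restricting the first factor along each simple coroot $\alpha:\mathbb G_m\to T_{\sconn}$.

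Fix $\alpha\in\Delta$ and let $\psi_{\alpha}$ be this restriction. By the identification \eqref{eq-quadratic-structure-torus-center}, which comes from \cite[Proposition 4.7.3]{zhao2022metaplectic}, $\psi_{\alpha}$ is the bi-rigidified morphism $\deloop\mathbb G_m\times\deloop Z\to\deloop^4A(1)$ classified (after tensoring with $\deloop\Psi^{\otimes 2}$) by the composite $\Fib(\Lambda\to\Lambda_{\adjoint})\to\Lambda\xrightarrow{b(\alpha,-)}A(-1)$, where $\Fib(\Lambda\to\Lambda_{\adjoint})$ is the complex of cocharacters of $Z$. By strict Weyl-invariance \eqref{eq-strict-weyl-invariance} one has $b(\alpha,\lambda)=Q(\alpha)\langle\check\alpha,\lambda\rangle$ for all $\lambda\in\Lambda$, and the right-hand side makes sense for $\lambda\in\Lambda_{\adjoint}$ because $\check\alpha$ lies in the root lattice; thus $b(\alpha,-)$ factors as $\Lambda\to\Lambda_{\adjoint}\xrightarrow{\tilde b_{\alpha}}A(-1)$ with $\tilde b_{\alpha}=Q(\alpha)\langle\check\alpha,-\rangle=b_1(\alpha,-)$. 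Consequently the classifying map of $\psi_{\alpha}$ equals $\Fib(\Lambda\to\Lambda_{\adjoint})\to\Lambda_{\adjoint}\xrightarrow{\tilde b_{\alpha}}A(-1)$, and the canonical null-homotopy of $\Fib(\Lambda\to\Lambda_{\adjoint})\to\Lambda_{\adjoint}$ provided by the defining fiber sequence produces a distinguished trivialization $\tau_{\alpha}^{\mathrm{can}}$ of $\psi_{\alpha}$. By its very construction (extend $b$ over $\Lambda_{\sconn}\otimes\Lambda_{\adjoint}$ as $b_1$, then use that $\deloop Z\to\deloop T\to\deloop T_{\adjoint}$ is null), the restriction of $\tau_1$ along $\alpha$ is exactly $\tau_{\alpha}^{\mathrm{can}}$.

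It remains to identify the restriction of $\tau$ along $\alpha$ with $\tau_{\alpha}^{\mathrm{can}}$. Here we use that $\tau$ descends from the canonical trivialization of $a^*\mu-(p_1)^*\mu-(p_2)^*\mu_Z$ on $\deloop G_{\sconn}\times\deloop Z$: viewed as a pointed morphism $\deloop Z\to\SMaps_e(\deloop G_{\sconn},\deloop^4A(1))$, it factors through the discrete sheaf $\SQuad(\Lambda_{\sconn},A(-1))_{\strict}$ (the third term of \eqref{eq-etale-metaplectic-cover-fiber-sequence}, the first term vanishing as $\pi_1 G_{\sconn}=0$), hence is the constant map at $0$, and $\tau$ is the induced trivialization. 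Restricting the first factor along $\alpha:\mathbb G_m\hookrightarrow T_{\sconn}\subset G_{\sconn}$ and transporting through \eqref{eq-quadratic-structure-torus-center}, one must compare this ``constant-at-$0$'' trivialization of $\psi_{\alpha}$ with $\tau_{\alpha}^{\mathrm{can}}$. A priori the two could differ by a nonzero element of $\Hom(Z,A)$ --- which is genuinely nontrivial when $|A|$ is even (for $G=\mathrm{SL}_2$ it is $\cong A[2]$) --- and ruling this out is the crux. We do so by the rank-one computation: factoring $\alpha$ through the homomorphism $\phi_{\alpha}:\mathrm{SL}_2\to G_{\sconn}$ attached to the simple root, the morphism $a^*\mu-(p_1)^*\mu-(p_2)^*\mu_Z$ on $\deloop\mathrm{SL}_2\times\deloop Z$ is again uniquely trivializable (since $\pi_1\mathrm{SL}_2=0$), and a direct check --- using the explicit description of the \'etale metaplectic cover of $\mathrm{SL}_2$ determined by $Q(\alpha)\in A(-1)$ together with \eqref{eq-quadratic-structure-torus-center} restricted to the maximal torus of $\mathrm{SL}_2$ --- shows that the transported trivialization is $\tau_{\alpha}^{\mathrm{can}}$. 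Chasing the identifications back up shows $\tau$ and $\tau_1$ agree after restriction along every $\alpha\in\Delta$, hence agree.

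The main obstacle is this last step: unwinding the abstractly-defined trivialization $\tau$ through \eqref{eq-quadratic-structure-torus-center} and confirming that it acquires no extra ($2$-torsion-type) twist relative to $\tau_{\alpha}^{\mathrm{can}}$. Everything else --- the reduction to the simple coroots using that $\Delta$ is a basis of $\Lambda_{\sconn}$, and the factorization of $b(\alpha,-)$ through $\Lambda_{\adjoint}$ combined with the nullity of $\Fib(\Lambda\to\Lambda_{\adjoint})\to\Lambda_{\adjoint}$ --- is formal. We also note that the cocycle condition of \S\ref{void-canonical-quadratic-structure-cocycle-condition} plays no role here, since only a single $Z$-variable is involved.
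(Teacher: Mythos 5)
Your reduction framework is fine, but the proof has a genuine gap exactly at the point you yourself flag as ``the crux.'' After restricting along a simple coroot $\alpha$ (equivalently, factoring through $\phi_{\alpha} : \mathrm{SL}_2 \rightarrow G_{\sconn}$), you must show that the abstractly-defined trivialization $\tau$, transported through \eqref{eq-quadratic-structure-torus-center}, agrees with $\tau_{\alpha}^{\mathrm{can}}$ and picks up no $2$-torsion twist in $\Hom(Z, A)$. You assert that ``a direct check shows'' this, but no such check is performed, and it is not formal: it requires unwinding how the isomorphism \eqref{eq-quadratic-structure-torus-center} (which is imported from \cite[Proposition 4.7.3]{zhao2022metaplectic}) interacts with the canonical trivialization over $\deloop G_{\sconn} \times \deloop Z$, and this is precisely the subtle sign/$2$-torsion compatibility that the whole lemma is about. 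In the paper this content is not re-derived either; it is supplied by an external input, \cite[Proposition 5.5.4]{zhao2022metaplectic}, which compares the $G_{\adjoint}$-equivariance structure of $\mu|_{\deloop G_{\sconn}}$ with $b_1 \otimes \deloop\Psi^{\otimes 2}$. So your proposal reduces the lemma to a statement essentially equivalent to that proposition and then stops; as written, the lemma is not proved.

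For comparison of routes: the paper reduces in the other variable, pulling back along $\deloop T_{\sconn} \times T_{\adjoint} \rightarrow \deloop T_{\sconn} \times \deloop Z$ and proving that this pullback is faithful (the diagram \eqref{eq-adjoint-to-center-birigidified-linear-pullback}); on $\deloop T_{\sconn} \times T_{\adjoint}$ both sides acquire explicit trivializations and the triangle becomes \eqref{eq-sconn-center-compatibility-restriction-adjoint}, whose commutativity is exactly \cite[Proposition 5.5.4]{zhao2022metaplectic}. Your coroot-by-coroot reduction in the $T_{\sconn}$-factor is a legitimate alternative (the faithfulness you need, namely injectivity of $\SHom(\Lambda_{\sconn}, A) \rightarrow \prod_{\alpha \in \Delta} A$ étale-locally, is clear), and your identification of the restriction of $\tau_1$ with $\tau_{\alpha}^{\mathrm{can}}$ via the factorization $b(\alpha, -) = b_1(\alpha, -) \circ (\Lambda \rightarrow \Lambda_{\adjoint})$ is correct. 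But to complete your argument you would either have to carry out the $\mathrm{SL}_2$ computation in detail (effectively reproving the needed case of \cite[Proposition 5.5.4]{zhao2022metaplectic}, including how \eqref{eq-quadratic-structure-torus-center} is constructed from \cite[Proposition 4.7.3]{zhao2022metaplectic} on the coroot torus) or cite that proposition directly, as the paper does.
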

\begin{proof}
We first observe that this assertion involves no additional structure. Indeed, bi-rigidified morphisms $\deloop T_{\sconn} \times \deloop Z \rightarrow \deloop^4A(1)$ are classified by pointed morphisms
\begin{equation}
\label{eq-sconn-center-compatibility-space}
\deloop Z \rightarrow \SHom_{\integers}(\Lambda_{\sconn}, \deloop^2A),
\end{equation}
which form a $1$-groupoid (\emph{cf.}~\S\ref{void-bi-rigidified-morphism-one-truncated}).

Next, we reduce the commutativity of \eqref{eq-sconn-center-compatibility} to its pullback along $\deloop T_{\sconn} \times T_{\adjoint} \rightarrow \deloop T_{\sconn} \times \deloop Z$. Indeed, bi-rigidified morphisms $\deloop T_{\sconn} \times T_{\adjoint} \rightarrow \deloop^4A(1)$ also form a $1$-groupoid, isomorphic to $\Maps_e(T_{\adjoint}, \SHom(\Lambda_{\sconn}, \deloop^2A))$. We need to show that the pullback functor
\begin{equation}
\label{eq-adjoint-to-center-birigidified-pullback}
\Maps_e(\deloop Z, \SHom(\Lambda_{\sconn}, \deloop^2A)) \rightarrow \Maps_e(T_{\adjoint}, \SHom(\Lambda_{\sconn}, \deloop^2A))
\end{equation}
is faithful. Since \eqref{eq-adjoint-to-center-birigidified-pullback} is a functor of Picard groupoids, it suffices to prove that its induced map on $\pi_1$ is injective. The latter occurs as the bottom horizontal arrow of the following commutative square
\begin{equation}
\label{eq-adjoint-to-center-birigidified-linear-pullback}
\begin{tikzcd}[column sep = 1em]
\Hom_{\integers}(\deloop Z, \SHom_{\integers}(\Lambda_{\sconn}, \deloop A)) \ar[r]\ar[d, "\simeq"] & \Hom_{\integers}(T_{\adjoint}, \SHom_{\integers}(\Lambda_{\sconn}, \deloop A)) \ar[d, "\simeq"] \\
\Maps_e(\deloop Z, \SHom_{\integers}(\Lambda_{\sconn}, \deloop A)) \ar[r] & \Maps_e(T_{\adjoint}, \SHom_{\integers}(\Lambda_{\sconn}, \deloop A))
\end{tikzcd}
\end{equation}
Here, the vertical functors are the forgetful ones: The left one is an isomorphism for degree reasons, and the right one is an isomorphism by the \'etale cohomology of $T_{\adjoint}$. The kernel of the top horizontal arrow of \eqref{eq-adjoint-to-center-birigidified-linear-pullback} is identified with
$$
\Hom_{\integers}(\deloop T, \SHom_{\integers}(\Lambda_{\sconn}, \deloop A)) \xrightarrow{\simeq} \Hom_{\integers}(T, \SHom_{\integers}(\Lambda_{\sconn}, A))
$$
which vanishes because $\SHom_{\integers}(\Lambda_{\sconn}, A)$ is discrete. We may now prove the commutativity of \eqref{eq-sconn-center-compatibility} after replacing $\deloop T_{\sconn} \times \deloop Z$ by $\deloop T_{\sconn} \times T_{\adjoint}$.

Along the composite $\deloop T_{\sconn} \times T_{\adjoint} \rightarrow \deloop T_{\sconn} \times \deloop Z \rightarrow \deloop T \times \deloop T$, the restrictions of $m$ and $p_1$ coincide and the restriction of $p_2$ is trivial. This endows $a^*\mu - (p_1)^*\mu - (p_2)^*\mu_Z$ with a trivialization over $\deloop T_{\sconn} \times T_{\adjoint}$. Since \eqref{eq-quadratic-structure-torus-center} is compatible with the trivializations over $\deloop T\times e$, it intertwines this trivialization with the one of $b\otimes \deloop\Psi^{\otimes 2}|_{\deloop T_{\sconn} \times T_{\adjoint}}$ induced from $b(\cdot, 0) = 0$.

Using these two trivializations, the restriction of \eqref{eq-sconn-center-compatibility} to $\deloop T_{\sconn} \times T_{\adjoint}$ reads as follows:
\begin{equation}
\label{eq-sconn-center-compatibility-restriction-adjoint}
\begin{tikzcd}[column sep = 2em, row sep = 1em]
	0 \ar[dr, "\tau"]\ar[dd, swap, "\id"] & \\
	& 0 \\
	0 \ar[ur, swap, "\tau_1"]
\end{tikzcd}
\end{equation}
Here, $\tau$ is induced from the $G_{\adjoint}$-equivariance structure of the restriction of $\mu$ to $\deloop G_{\sconn}$ and $\tau_1$ is the map $\deloop T_{\sconn} \times T_{\adjoint} \rightarrow \deloop^3A(1)$ given by applying the loop space functor to the second factor in $b_1\otimes \deloop\Psi^{\otimes 2}$. The commutativity of \eqref{eq-sconn-center-compatibility-restriction-adjoint} is precisely \cite[Proposition 5.5.4]{zhao2022metaplectic}.
\end{proof}

\begin{void}
\label{void-proof-prop-canonical-quadratic-structure}
We are now ready to construct the isomorphism \eqref{eq-canonical-quadratic-structure}.
\begin{proof}[Proof of Proposition \ref{prop-canonical-quadratic-structure}]
Consider the monoidal morphism
\begin{equation}
\label{eq-quadratic-expression-classifying-data}
Z \rightarrow \SHom_{\integers}(\pi_1 G, \deloop A)
\end{equation}
classifying the bi-rigidified morphism $a^*\mu - (p_1)^*\mu - (p_2)^*\mu_Z$ (\emph{cf.}~\S\ref{void-bi-rigidified-morphism-one-truncated}). We need to construct an isomorphism between \eqref{eq-quadratic-expression-classifying-data} and the adjoint of the pairing
\begin{equation}
\label{eq-center-cocenter-pairing-kummer-second-factor}
b_2\otimes\Psi : \pi_1 G \otimes Z \rightarrow \deloop A
\end{equation}
defined by tensoring with $\Psi : \mathbb G_m \rightarrow \deloop\hat{\integers}(1)$ along the second factor of $b_2$.

Suppose first that $G$ is split and equipped with a Killing pair $T\subset B\subset G$. In this case, we have identified \eqref{eq-quadratic-form-torus-center} with $b\otimes \deloop\Psi^{\otimes 2}|_{\deloop T \times \deloop Z}$ via the isomorphism \eqref{eq-quadratic-structure-torus-center} and proved that the trivialization $\tau$ corresponds to the trivialization of $b\otimes \deloop\Psi^{\otimes 2}|_{\deloop T_{\sconn} \times \deloop Z}$ defined by $b_1 \otimes \deloop\Psi^{\otimes 2}$ (\emph{cf.}~Lemma \ref{lem-sconn-center-compatibility}). This yields a morphism of fiber sequences
\begin{equation}
\label{eq-quadratic-structure-fiber-sequence-map}
\begin{tikzcd}[column sep = 1em]
	Z \ar[r, "\eqref{eq-quadratic-expression-classifying-data}"]\ar[d] & \SHom_{\integers}(\pi_1G, \deloop A) \ar[d] \\
	T \ar[r, "b \otimes \Psi"]\ar[d] & \SHom_{\integers}(\Lambda, \deloop A) \ar[d] \\
	T_{\adjoint} \ar[r, "b_1\otimes \Psi"] & \SHom_{\integers}(\Lambda_{\sconn}, \deloop A)
\end{tikzcd}
\end{equation}
which gives an isomorphism between \eqref{eq-quadratic-expression-classifying-data} and the adjoint of \eqref{eq-center-cocenter-pairing-kummer-second-factor}.

We shall argue that this isomorphism is independent of the choice of the Killing pair $T\subset B\subset G$. For this, it suffices to show that the commutativity witness of the top square in \eqref{eq-quadratic-structure-fiber-sequence-map} is independent of the choice of the Killing pair. Given another Killing pair $T' \subset B' \subset G$, we need to show that the canonical identification $\deloop T \cong \deloop T'$ intertwines the isomorphism \eqref{eq-quadratic-structure-torus-center} defined for $\deloop T$, respectively $\deloop T'$. This follows because \eqref{eq-quadratic-structure-torus-center} is the restriction of an isomorphism between rigidified morphisms $\deloop T\times \deloop T \rightarrow \deloop^4A(1)$, and the latter form a discrete space classified by bilinear pairings $\Lambda \otimes \Lambda \rightarrow A(-1)$.

Since the isomorphism between \eqref{eq-quadratic-expression-classifying-data} and the adjoint of \eqref{eq-center-cocenter-pairing-kummer-second-factor} is constructed for any split $G$ without additional choices, the case for any reductive $G$ follows by \'etale descent.
\end{proof}
\end{void}

\subsection{$\deloop Z^{\sharp}$-equivariance}

\begin{void}
Recall the reductive group $S$-scheme $G^{\sharp}$ and its center $Z^{\sharp}$ (\emph{cf.}~\S\ref{void-sharp-lattices}). There is a natural map of group $S$-schemes of multiplicative type $Z^{\sharp} \rightarrow Z$. The $\deloop Z$-action on $\deloop G$ restricts to a $\deloop Z^{\sharp}$-action, which we record in the diagram
\begin{equation}
\label{eq-sharp-center-action-diagram}
\begin{tikzcd}[column sep = 0em]
& \deloop G \times \deloop Z^{\sharp} \ar[rr, "a^{\sharp}"]\ar[dl, swap, "p_1"]\ar[dr, "p_2"] & & \deloop G \\
\deloop G & & \deloop Z^{\sharp}
\end{tikzcd}
\end{equation}

Denote by $\mu_{Z^{\sharp}}$ the restriction of $\mu$ to $\deloop Z^{\sharp}$. Recall that $\mu_{Z^{\sharp}}$ has a canonical $\mathbb E_{\infty}$-monoidal structure (\emph{cf.}~Proposition \ref{prop-sharp-center-symmetric-monoidal}).
\end{void}

\begin{cor}
\label{cor-sharp-center-quadratic-structure}
In reference to \eqref{eq-sharp-center-action-diagram}, there is a canonical isomorphism of bi-rigidified morphisms $\deloop G \times \deloop Z^{\sharp} \rightarrow \deloop^4A(1)$:
\begin{equation}
\label{eq-sharp-center-quadratic-structure}
(a^{\sharp})^*\mu - (p_1)^*\mu - (p_2)^*\mu_{Z^{\sharp}} \xrightarrow{\simeq} 0.
\end{equation}
\end{cor}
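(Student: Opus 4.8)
The plan is to obtain Corollary~\ref{cor-sharp-center-quadratic-structure} as a formal consequence of Proposition~\ref{prop-canonical-quadratic-structure}, the only extra input being the tautology that the pairing $b_2$ vanishes on the cocharacter lattice of $Z^{\sharp}$. The diagram \eqref{eq-sharp-center-action-diagram} is the base change of \eqref{eq-center-action-diagram} along $\deloop Z^{\sharp}\to\deloop Z$; in particular $a^{\sharp}$, $p_1$, $p_2$ are the restrictions of $a$, $p_1$, $p_2$, and the pointed morphism $\mu_{Z^{\sharp}}$ (forgetting its $\mathbb E_{\infty}$-structure) is the restriction of $\mu_Z$ along $\deloop Z^{\sharp}\to\deloop Z$. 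Pulling the isomorphism \eqref{eq-canonical-quadratic-structure} back along $\id\times\deloop(Z^{\sharp}\to Z)\colon\deloop G\times\deloop Z^{\sharp}\to\deloop G\times\deloop Z$ therefore yields a canonical isomorphism of bi-rigidified morphisms $\deloop G\times\deloop Z^{\sharp}\to\deloop^4A(1)$:
\[
(a^{\sharp})^*\mu-(p_1)^*\mu-(p_2)^*\mu_{Z^{\sharp}}\xrightarrow{\simeq}(b_2\otimes\deloop\Psi^{\otimes 2})|_{\deloop G\times\deloop Z^{\sharp}}.
\]
So it remains to trivialize the right-hand side canonically as a bi-rigidified morphism.

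By \S\ref{void-bi-rigidified-morphism-one-truncated}, bi-rigidified morphisms $\deloop G\times\deloop Z^{\sharp}\to\deloop^4A(1)$ are classified by monoidal morphisms $Z^{\sharp}\to\SHom_{\integers}(\pi_1G,\deloop A)$, and by the proof of Proposition~\ref{prop-canonical-quadratic-structure} (\emph{cf.}~\eqref{eq-center-cocenter-pairing-kummer-second-factor}) the morphism classifying $b_2\otimes\deloop\Psi^{\otimes 2}$ over $\deloop G\times\deloop Z$ is the adjoint of $b_2\otimes\Psi\colon\pi_1G\otimes Z\to\deloop A$. Restricting along $Z^{\sharp}\to Z$ and stripping the $\Psi$-twist as in \S\ref{void-proof-prop-canonical-quadratic-structure}, the sought trivialization amounts to a canonical nullhomotopy of the composite of maps of complexes
\[
\Fib(\Lambda^{\sharp}\to\Lambda^{\sharp}_{\adjoint})\xrightarrow{\iota}\Fib(\Lambda\to\Lambda_{\adjoint})\xrightarrow{b_2^{\mathrm{ad}}}\SHom(\pi_1G,A(-1)),
\]
where $\iota$ is induced by the inclusions $\Lambda^{\sharp}\subset\Lambda$, $\Lambda^{\sharp}_{\adjoint}\subset\Lambda_{\adjoint}$ and $b_2^{\mathrm{ad}}$ is the map obtained by passing to fibers of the vertical arrows in \eqref{eq-strict-weyl-invariant-form-extension}. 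This composite is the map on vertical fibers induced by the restriction of \eqref{eq-strict-weyl-invariant-form-extension} to the square with corners $\Lambda^{\sharp}$, $\SHom(\Lambda,A(-1))$, $\Lambda^{\sharp}_{\adjoint}$, $\SHom(\Lambda_{\sconn},A(-1))$; its two horizontal arrows are the composites $\Lambda^{\sharp}\hookrightarrow\Lambda\to\SHom(\Lambda,A(-1))$ and $\Lambda^{\sharp}_{\adjoint}\hookrightarrow\Lambda_{\adjoint}\to\SHom(\Lambda_{\sconn},A(-1))$, which are the zero morphisms of sheaves because $\Lambda^{\sharp}$ (resp.~$\Lambda^{\sharp}_{\adjoint}$) is by construction the kernel of $b$ (resp.~of $b_1$), \emph{cf.}~\S\ref{void-sharp-lattices}. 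Since the square then commutes via the degenerate $2$-cell, functoriality of fibers supplies a canonical nullhomotopy of $b_2^{\mathrm{ad}}\circ\iota$; in the evident two-term model it is simply the zero chain map. This is the desired trivialization.

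Composing the two steps produces the isomorphism \eqref{eq-sharp-center-quadratic-structure}, and naturality in $(G,\mu)$ is inherited from that of \eqref{eq-canonical-quadratic-structure}. I do not expect a genuine obstacle here: the mathematical content is entirely carried by Proposition~\ref{prop-canonical-quadratic-structure}, and the only delicate point is the bookkeeping in the second step—namely, checking that the classification of \S\ref{void-bi-rigidified-morphism-one-truncated} is compatible with bi-rigidification (it is, because that classification is phrased through \emph{pointed} morphisms $\deloop Z^{\sharp}\to\SMaps_e(\deloop G,\deloop^4A(1))$, whose nullhomotopies automatically respect the rigidifications along $e\times\deloop Z^{\sharp}$ and $\deloop G\times e$) and correctly tracking the $\Psi$-twist when reducing the triviality of $b_2\otimes\deloop\Psi^{\otimes 2}|_{\deloop G\times\deloop Z^{\sharp}}$ to that of $b_2^{\mathrm{ad}}\circ\iota$.
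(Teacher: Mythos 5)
Your proposal is correct and follows essentially the same route as the paper: restrict the canonical quadratic structure \eqref{eq-canonical-quadratic-structure} to $\deloop G \times \deloop Z^{\sharp}$ and trivialize $b_2\otimes\deloop\Psi^{\otimes 2}$ there, using that the horizontal arrows of \eqref{eq-strict-weyl-invariant-form-extension} vanish on $\Lambda^{\sharp}$, respectively $\Lambda^{\sharp}_{\adjoint}$, so that $b_2$ restricts to zero on $\pi_1G\otimes\Fib(\Lambda^{\sharp}\rightarrow\Lambda^{\sharp}_{\adjoint})$. Your extra bookkeeping (classification of bi-rigidified morphisms, the explicit nullhomotopy on fibers) only spells out what the paper leaves implicit.
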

\begin{proof}
The bilinear pairing $b_2$ (\emph{cf.}~\eqref{eq-center-cocenter-pairing}) restricts to the trivial pairing
$$
\pi_1 G \otimes \Fib(\Lambda^{\sharp} \rightarrow \Lambda^{\sharp}_{\adjoint}) \rightarrow A(-1),
$$
because the horizontal arrows of \eqref{eq-strict-weyl-invariant-form-extension} vanish over $\Lambda^{\sharp}$, respectively $\Lambda^{\sharp}_{\adjoint}$. This induces a trivialization of the restriction of $b_2\otimes \deloop\Psi^{\otimes 2}$ to $\deloop G_{\abelian} \otimes \deloop Z^{\sharp}$.

The isomorphism \eqref{eq-sharp-center-quadratic-structure} is the restriction of \eqref{eq-canonical-quadratic-structure} to $\deloop G \times \deloop Z^{\sharp}$, composed with the trivialization of the right-hand-side defined above.
\end{proof}

\begin{void}
\label{void-sharp-center-equivariance-structure}
The isomorphism \eqref{eq-sharp-center-quadratic-structure} induces an isomorphism of rigidified (\emph{not} bi-rigidified) morphisms $\deloop G \times \deloop Z^{\sharp} \rightarrow \deloop^4A(1)$:
\begin{equation}
\label{eq-sharp-center-equivariance-structure}
(a^{\sharp})^*\mu \xrightarrow{\simeq} (p_1)^*\mu + (p_2)^*\mu_{Z^{\sharp}},
\end{equation}
which may be regarded as the part of a $\deloop Z^{\sharp}$-equivariance structure on $\mu$ ``against $\mu_{Z^{\sharp}}$". The restrictions of \eqref{eq-sharp-center-equivariance-structure} to $\deloop G \times e$ and $e \times \deloop Z^{\sharp}$ are induced from the equality of maps $a^{\sharp} = p_1$, $a^{\sharp} = p_2$ over these loci.

The isomorphism \eqref{eq-sharp-center-equivariance-structure} is equipped with cocycle data. To be more transparent, let us formulate it in functorial terms: Given a $G$-torsor $\mathscr E$ and $Z^{\sharp}$-torsors $\mathscr Z_1$, $\mathscr Z_2$ over an $S$-scheme, the diagram of sections of $\deloop^4A(1)$ commute
\begin{equation}
\label{eq-sharp-center-equivariance-cocycle-diagram}
\begin{tikzcd}[column sep = 1.5em]
	\mu(\mathscr E \otimes \mathscr Z_1 \otimes \mathscr Z_2) \ar[r, "\simeq"]\ar[d, "\simeq"] & \mu(\mathscr E) + \mu_{Z^{\sharp}}(\mathscr Z_1\otimes\mathscr Z_2) \ar[d, "\simeq"] \\
	\mu(\mathscr E \otimes \mathscr Z_1) + \mu_{Z^{\sharp}}(\mathscr Z_2) \ar[r, "\simeq"] & \mu(\mathscr E) + \mu_{Z^{\sharp}}(\mathscr Z_1) + \mu_{Z^{\sharp}}(\mathscr Z_2)
\end{tikzcd}
\end{equation}
Here, the right vertical arrow appeals to the monoidal structure on $\mu_{Z^{\sharp}}$ and the remaining arrows are instances of \eqref{eq-sharp-center-equivariance-structure}.

The commutativity of \eqref{eq-sharp-center-equivariance-cocycle-diagram} follows from the cocycle condition on the canonical quadratic structure (\emph{cf.}~\S\ref{void-canonical-quadratic-structure-cocycle-condition}). Similarly to the latter, it is a \emph{condition} and not additional structure. Likewise, higher coherence (for triples of $Z^{\sharp}$-torsors, \emph{etc.}) is trivially satisfied.
\end{void}

\medskip

\section{Weissman's obstruction}
\label{sec-weissman-obstruction}

Let $F$ be a local field with a fixed algebraic closure $\bar F$. Let $G$ be a reductive group $F$-scheme. Let $A$ be a finite abelian group with order invertible in $F$, equipped with an injective character $\zeta : A \rightarrow \complexes^{\times}$. Let $\mu$ be an $A$-valued \'etale metaplectic cover of $G$.

In this section, we define Weissman's obstruction $\Omega_{\beta}(\sigma)$, starting with the case $\Omega(\sigma)$ for the trivial $G$-isocrystal, and explain why it obstructs the existence of fibers of the conjectural map $\LLC_{\beta}$ (\emph{cf.}~Conjecture \ref{conj-enhanced-local-langlands-correspondence}) at $\sigma$. Being conjectural, we need to assume something about $\LLC_{\beta}$ to make this precise: This is the compatibility with central core characters (\emph{cf.}~Lemma \ref{lem-weissman-obstruction-isocrystal}) which requires Theorem \ref{thm-torus-commutative-cover-duality} to state. Then we express $\Omega_{\beta}(\sigma)$ in terms of $\Omega(\sigma)$ and the Kottwitz invariant of $\beta$ (\emph{cf.}~Theorem \ref{thm-inclusion-difference-kottwitz-invariant}, Corollary \ref{cor-weissman-obstruction-kottwitz-invariant}).

The last two subsections, \S\ref{sec-example-tori} and \S\ref{sec-center-isocrystal}, can be considered supplements to the article. In \S\ref{sec-example-tori}, we prove that for tori, the vanishing of $\Omega_{\beta}(\sigma)$ is necessary and sufficient for $\LLC_{\beta}^{-1}(\sigma)$. In  \S\ref{sec-center-isocrystal}, we prove a ``dual version" of one of the ingredients in Theorem \ref{thm-inclusion-difference-kottwitz-invariant}: It identifies the cover $\widetilde G_{\beta}$ when $G_{\beta}$ is isomorphic to $G$, \emph{i.e.}~when $\beta$ comes from a $Z$-isocrystal.

\subsection{The case for $\widetilde G$}

\begin{void}
We shall associated to $G$ and $\mu$ a finite abelian group $K$ and a map
\begin{equation}
\label{eq-weissman-obstruction}
	\Omega : \Phi(\widetilde H) \rightarrow \Hom(K, \complexes^{\times}).
\end{equation}

For any $\sigma \in \Phi(\widetilde H)$, we shall refer to $\Omega(\sigma)$ as \emph{Weissman's obstruction} of $\sigma$. It has the property that $\Omega(\sigma)\neq 1$ implies that the fiber of the conjectural local Langlands correspondence \eqref{eq-local-langlands-correspondence} at $\sigma$ is empty, assuming ``compatibility with central core characters".

The obstruction $\Omega$ was first observed by Weissman when $G$ is a torus (\emph{cf.}~\cite[\S4]{MR2485462}, \cite[\S8.3]{MR3802419}).
\end{void}

\begin{void}\emph{Definition of $K$.}
\label{void-sharp-center-kernel}
We let $Q$ be the quadratic form associated to $\mu$ (\emph{cf.}~\S\ref{void-etale-metaplectic-cover-fiber-sequence}) and consider the induced \'etale sheaves $\Lambda^{\sharp}$, $\Lambda^{\sharp}_{\sconn}$, $\Lambda^{\sharp}_{\adjoint}$ (\emph{cf.}~\S\ref{void-sharp-lattices}). Tensoring with $\mathbb G_m$, we obtain $F$-tori $T^{\sharp}$, $T^{\sharp}_{\sconn}$, $T^{\sharp}_{\adjoint}$ fitting into a commutative diagram of isogenies
\begin{equation}
\label{eq-sharp-cocharacter-lattice}
\begin{tikzcd}[column sep = 2em]
T_{\sconn}^{\sharp} \ar[r]\ar[d] & T^{\sharp} \ar[r]\ar[d] & T^{\sharp}_{\adjoint} \ar[d] \\
T_{\sconn} \ar[r] &T \ar[r] & T_{\adjoint}
\end{tikzcd}
\end{equation}

Denote by $Z^{\sharp}$ the kernel of $T^{\sharp} \rightarrow T^{\sharp}_{\adjoint}$, so we have a natural map $Z^{\sharp} \rightarrow Z$. Define
$$
K := \Ker(Z^{\sharp}(F) \rightarrow Z(F)).
$$
\end{void}

\begin{void}
Denote by $\widetilde G$ the image of $\mu$ under \eqref{eq-construction-of-covers}. Write $\widetilde Z$ for its pullback along $Z(F) \rightarrow G(F)$ and $\widetilde Z^{\sharp}$ for its further pullback to $Z^{\sharp}(F)$.

The subgroup $\Ker(\widetilde Z^{\sharp} \rightarrow \widetilde Z)$ of $\widetilde Z^{\sharp}$ is identified with $K$ via the projection onto $Z^{\sharp}(F)$. Thus, we obtain a map
\begin{equation}
\label{eq-obstruction-subgroup-sharp-center}
i : K \rightarrow \widetilde Z^{\sharp}.
\end{equation}
\end{void}

\begin{lem}
\label{lem-sharp-center-commutative}
The group $\widetilde Z^{\sharp}$ is commutative and its image in $\widetilde G$ is central.
\end{lem}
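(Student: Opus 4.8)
The plan is to establish the two assertions separately, in both cases feeding the $\deloop Z^{\sharp}$-equivariance structure of \S\ref{void-sharp-center-equivariance-structure} (equivalently, Corollary \ref{cor-sharp-center-quadratic-structure}) through the construction functor $\int_F$ of \eqref{eq-construction-of-covers}, which is $\integers$-linear and natural in the group. First I would note that, by naturality of $\int_F$ along the map $\iota : Z^{\sharp} \rightarrow Z \hookrightarrow G$, the cover $\widetilde Z^{\sharp}$ (defined as the pullback of $\widetilde G$ to $Z^{\sharp}(F)$) is identified with $\int_F \mu_{Z^{\sharp}}$, where $\mu_{Z^{\sharp}}$ is the restriction of $\mu$ to $\deloop Z^{\sharp}$. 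Since $Z^{\sharp}$ is of multiplicative type and $\mu_{Z^{\sharp}}$ is canonically $\mathbb E_{\infty}$-monoidal (Proposition \ref{prop-sharp-center-symmetric-monoidal}), the commutativity of $\widetilde Z^{\sharp}$ follows precisely as in \S\ref{void-torus-commutative-cover}.

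For the centrality of the image of $\widetilde Z^{\sharp}$ in $\widetilde G$, I would argue through the commutator pairing of $\widetilde G$. Because $\iota$ factors through the center $Z$ of $G$, the map $m : G \times Z^{\sharp} \rightarrow G$, $(g, z) \mapsto g \cdot \iota(z)$, is a homomorphism of $F$-group schemes with $\deloop m = a^{\sharp}$ (notation of \eqref{eq-sharp-center-action-diagram}), and the two projections $\mathrm{pr}_1, \mathrm{pr}_2$ are homomorphisms with $\deloop \mathrm{pr}_i = p_i$. Applying $\int_F$ to the isomorphism $m^{*}\mu \xrightarrow{\simeq} \mathrm{pr}_1^{*}\mu + \mathrm{pr}_2^{*}\mu_{Z^{\sharp}}$ coming from \eqref{eq-sharp-center-equivariance-structure}, and using that $\int_F$ is natural and carries the additive structure on rigidified morphisms to Baer sums, I obtain an isomorphism of central extensions of $G(F) \times Z^{\sharp}(F)$ by $A$:
\[
m(F)^{*}\widetilde G \;\xrightarrow{\simeq}\; \mathrm{pr}_1(F)^{*}\widetilde G \;+\; \mathrm{pr}_2(F)^{*}\widetilde Z^{\sharp}.
\]
Let $c$ denote the commutator pairing of $\widetilde G$, so that $c(g_1, g_2) \in A$ is the commutator of any two lifts of commuting elements $g_1, g_2 \in G(F)$; it is well-defined since $A$ is central, and $c(g, 1) = 1$. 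I evaluate the commutator pairings of both sides of the displayed isomorphism at the commuting pair $((g, 1), (1, z))$ in $(G(F) \times Z^{\sharp}(F))^{2}$. On the left, since $m$ is a homomorphism, this is $c(m(g,1), m(1,z)) = c(g, \iota(z))$ — which makes sense because $\iota(z) \in Z(F)$ is central in $G(F)$. On the right, the commutator pairing of a Baer sum is the product of those of the summands, which gives $c(g, 1) \cdot c_{\widetilde Z^{\sharp}}(1, z) = 1$, where $c_{\widetilde Z^{\sharp}}$ is the commutator pairing of $\widetilde Z^{\sharp}$. Hence $c(g, \iota(z)) = 1$ for all $g \in G(F)$ and $z \in Z^{\sharp}(F)$. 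Equivalently, any lift to $\widetilde G$ of an element of $\iota(Z^{\sharp}(F))$ commutes with every element of $\widetilde G$; since the image of $\widetilde Z^{\sharp} = \iota(F)^{*}\widetilde G \rightarrow \widetilde G$ is exactly the set of such lifts, it is central.

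The substantive content is all in Corollary \ref{cor-sharp-center-quadratic-structure} (i.e.\ the canonical quadratic structure), so the only thing to be careful about is the passage from morphisms of stacks to covers of $F$-points: one uses that $\int_F$ is natural in the group and additive — both built into the construction of \cite{zhao2022metaplectic} — and that the equivariance isomorphism \eqref{eq-sharp-center-equivariance-structure} restricts to the canonical trivial identifications along $\deloop G \times e$ and $e \times \deloop Z^{\sharp}$, as recorded in \S\ref{void-sharp-center-equivariance-structure}. With those facts the commutator computation above is immediate. One could also bypass the commutator pairing and read off centrality directly from the Baer-sum presentation, since a lift supported over $G(F) \times \{1\}$ and a lift supported over $\{1\} \times Z^{\sharp}(F)$ automatically commute there.
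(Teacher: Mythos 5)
Your proof is correct and takes essentially the same route as the paper: commutativity from Proposition \ref{prop-sharp-center-symmetric-monoidal}, and centrality by applying $\int_F$ to the equivariance isomorphism \eqref{eq-sharp-center-equivariance-structure} to obtain the Baer-sum identification of covers over $G(F)\times Z^{\sharp}(F)$. The only cosmetic difference is the last step, where you extract centrality via the commutator pairing (additive under Baer sums and pullbacks), while the paper reads it off from the induced homomorphism $\widetilde G\times\widetilde Z^{\sharp}\rightarrow\widetilde G$ --- precisely the variant you mention at the end.
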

\begin{proof}
The commutativity of $\widetilde Z^{\sharp}$ follows from the fact that $\mu_{Z^{\sharp}}$ is $\mathbb E_{\infty}$-monoidal (\emph{cf.}~Proposition \ref{prop-sharp-center-symmetric-monoidal}). It remains to prove that the image of $\widetilde Z^{\sharp}$ in $\widetilde G$ is central.

Denote by $a^{\sharp} : G(F) \times Z^{\sharp}(F) \rightarrow G(F)$ the multiplication map. Applying the construction functor \eqref{eq-construction-of-covers}, with $G \times Z^{\sharp}$ playing the role of $G$, to the isomorphism \eqref{eq-sharp-center-equivariance-structure}, we obtain an isomorphism of covers of $G(F) \times Z^{\sharp}(F)$:
\begin{equation}
\label{eq-sharp-center-cover-equivariance}
(a^{\sharp})^*\widetilde G \xrightarrow{\simeq} (p_1)^*\widetilde G + (p_2)^*\widetilde Z^{\sharp},
\end{equation}
whose restrictions to $G(F) \times e$ and $e \times Z^{\sharp}(F)$ are induced from the equality of maps $a^{\sharp} = p_1$, respectively $a^{\sharp} = p_2$ over these subgroups.

Equivalently, one may express \eqref{eq-sharp-center-cover-equivariance} as a morphism of short exact sequences
$$
\begin{tikzcd}[column sep = 1.5em]
	1 \ar[r] & A \times A \ar[r]\ar[d, "\sum"] & \widetilde G \times \widetilde Z^{\sharp} \ar[r]\ar[d, "\tilde a^{\sharp}"] & G(F) \times Z^{\sharp}(F) \ar[r]\ar[d, "a^{\sharp}"] & 1 \\
	1 \ar[r] & A \ar[r] & \widetilde G \ar[r] & G(F) \ar[r] & 1
\end{tikzcd}
$$
where $\tilde a^{\sharp}$ restricts to the identity on $\widetilde G \times e$ and the natural map on $e \times \widetilde Z^{\sharp}$. By expressing an element $(\tilde g, \tilde z) \in \widetilde G \times \widetilde Z^{\sharp}$ as $(\tilde g, 1) \cdot (1, \tilde z)$, respectively $(1, \tilde z)\cdot(\tilde g, 1)$, and using the fact that $\tilde a^{\sharp}$ is a group homomorphism, we see that $\tilde g$ commutes with the image of $\tilde z$.
\end{proof}

\begin{void}
\label{void-central-core-character}
By Lemma \ref{lem-sharp-center-commutative} and Schur's lemma, $\widetilde Z^{\sharp}$ acts by a character $\chi$ on any irreducible $\zeta$-genuine smooth representation $V$. The association of $\chi$ to $[V]$ defines a map
\begin{equation}
\label{eq-restriction-sharp-center}
	\Pi(\widetilde G) \rightarrow \Pi(\widetilde Z^{\sharp}),
\end{equation}
where the target stands for the set of $\zeta$-genuine smooth characters $\widetilde Z^{\sharp} \rightarrow \complexes^{\times}$.

We refer to the image of $[V] \in \Pi(\widetilde G)$ under \eqref{eq-restriction-sharp-center} as the \emph{central core character} of $[V]$.
\end{void}

\begin{rem}
Our notion of the ``central core character" is different from Weissman's (\emph{cf.}~\cite[\S6.3]{MR3802418}). Namely, the notion of \emph{op.cit.}~concerns only the maximal torus of $Z^{\sharp}$ whereas ours concerns the entire $Z^{\sharp}$.
\end{rem}

\begin{void}\emph{Definition of $\Omega$.}
The map \eqref{eq-weissman-obstruction} is defined to be the composition of \eqref{eq-langlands-parameter-functoriality-cocenter} with the inverse of \eqref{eq-local-langlands-correspondence-sharp-center} and the restriction along \eqref{eq-obstruction-subgroup-sharp-center}:
\begin{align*}
\Omega : \Phi(\widetilde H) &\rightarrow \Phi(\widetilde H_{\abelian}) \\
 &\xrightarrow{\simeq} \Pi(\widetilde Z^{\sharp}) \xrightarrow{i^*} \Hom(K, \complexes^{\times}).
\end{align*}
\end{void}

\begin{lem}[Weissman]
\label{lem-weissman-obstruction}
Suppose that there is a map $\LLC : \Pi(\widetilde G) \rightarrow \Phi(\widetilde H)$ satisfying the following compatibility with central core characters: It renders the diagram
\begin{equation}
\label{eq-local-langlands-compatibility-with-central-character}
\begin{tikzcd}
	\Pi(\widetilde G) \ar[r, "\LLC"]\ar[d, "\eqref{eq-restriction-sharp-center}"] & \Phi(\widetilde H) \ar[d, "\eqref{eq-langlands-parameter-functoriality-cocenter}"] \\
	\Pi(\widetilde Z^{\sharp}) \ar[r, "\eqref{eq-local-langlands-correspondence-sharp-center}"] & \Phi(\widetilde H_{\abelian})
\end{tikzcd}
\end{equation}
commutative. Then for any $\sigma \in \Phi(\widetilde H)$ with $\Omega(\sigma)\neq 1$, the set $\LLC^{-1}(\sigma)$ is empty.
\end{lem}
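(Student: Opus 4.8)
The plan is to argue by contradiction. Suppose $\sigma \in \Phi(\widetilde H)$ satisfies $\Omega(\sigma)\neq 1$, and suppose there exists an irreducible $\zeta$-genuine smooth representation $V$ of $\widetilde G$ with $\LLC([V]) = \sigma$. I will show that this forces $\Omega(\sigma) = 1$.

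The first step is to compute what the central core character of $V$ does on $K$. By \S\ref{void-sharp-center-kernel}, the map $i : K \rightarrow \widetilde Z^{\sharp}$ of \eqref{eq-obstruction-subgroup-sharp-center} identifies $K$ with $\Ker(\widetilde Z^{\sharp} \rightarrow \widetilde Z)$; since $\widetilde Z^{\sharp}$ is by construction the pullback of $\widetilde Z$ along $Z^{\sharp}(F) \rightarrow Z(F)$, the composite $K \xrightarrow{i} \widetilde Z^{\sharp} \rightarrow \widetilde Z$ is the trivial homomorphism. Composing further with $\widetilde Z \rightarrow \widetilde G$, the subgroup $i(K) \subset \widetilde Z^{\sharp}$ maps to $1 \in \widetilde G$. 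Now the central core character $\chi_V \in \Pi(\widetilde Z^{\sharp})$, i.e.~the image of $[V]$ under \eqref{eq-restriction-sharp-center}, is by definition (\emph{cf.}~\S\ref{void-central-core-character}, which relies on Lemma \ref{lem-sharp-center-commutative}) the character through which $\widetilde Z^{\sharp}$ acts on $V$ \emph{via} its central image in $\widetilde G$. Restricting this action along $i$, we conclude $i^*\chi_V = 1$ in $\Hom(K, \complexes^{\times})$.

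The second step invokes the hypothesis. The commutativity of \eqref{eq-local-langlands-compatibility-with-central-character}, evaluated at $[V]$, says that the image of $\sigma = \LLC([V])$ under \eqref{eq-langlands-parameter-functoriality-cocenter} coincides with the image of $\chi_V$ under the equivalence \eqref{eq-local-langlands-correspondence-sharp-center}; equivalently, $\chi_V$ is the image of $\sigma$ under \eqref{eq-langlands-parameter-functoriality-cocenter} followed by the inverse of \eqref{eq-local-langlands-correspondence-sharp-center}. By the definition of $\Omega$ as this composition post-composed with $i^*$, we get $\Omega(\sigma) = i^*\chi_V$. Combining with the first step yields $\Omega(\sigma) = 1$, contradicting $\Omega(\sigma)\neq 1$. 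Therefore $\LLC^{-1}(\sigma) = \emptyset$.

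There is no genuine obstacle here; the argument is a short diagram chase once the relevant objects are unwound. The only points requiring care are bookkeeping: first, that the central core character is genuinely computed through the central map $\widetilde Z^{\sharp} \rightarrow \widetilde G$ of Lemma \ref{lem-sharp-center-commutative}, so that restricting to $i(K)$ (which dies in $\widetilde G$) gives the trivial character; and second, that $\Omega$ is \emph{defined} precisely as the composite of \eqref{eq-langlands-parameter-functoriality-cocenter}, the inverse of \eqref{eq-local-langlands-correspondence-sharp-center}, and $i^*$, so that the central core character of any preimage of $\sigma$ must restrict to $\Omega(\sigma)$ on $K$.
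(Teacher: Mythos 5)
Your proof is correct and coincides with the paper's own argument: the commutativity of \eqref{eq-local-langlands-compatibility-with-central-character} forces the central core character of any $V \in \LLC^{-1}(\sigma)$ to restrict to $\Omega(\sigma)$ on $K$, while the triviality of the image of $K$ in $\widetilde G$ forces that restriction to be trivial. The contradiction framing is only a cosmetic difference from the paper's direct statement.
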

\begin{proof}
Fix $\sigma \in \Phi(\widetilde H)$ and let $V$ be an irreducible $\zeta$-genuine smooth representation of $\widetilde G$ belonging to fiber of $\LLC$ at $\sigma$.

Suppose that $\widetilde Z^{\sharp}$ acts on $V$ via some character $\chi$. By the commutativity of \eqref{eq-local-langlands-compatibility-with-central-character}, the subgroup $K$ of $\widetilde Z^{\sharp}$ acts by the character $\Omega(\sigma)$. However, since the image of $K$ in $\widetilde G$ is trivial, this implies that $\Omega(\sigma) = 1$.
\end{proof}

\begin{rem}
\label{rem-every-character-weissman-obstruction}
Every character $\chi : K \rightarrow \complexes^{\times}$ occurs as $\Omega(\sigma)$ for some $\sigma \in \Phi(\widetilde H)$.

Indeed, one may first extend $(\zeta, \chi)$ along the inclusion $A \times K \subset \widetilde T^{\sharp}$ to obtain a $\zeta$-genuine character of $\widetilde T^{\sharp}$. Under the local Langlands correspondence for $(T^{\sharp}, \mu_{T^{\sharp}})$ (\emph{cf.}~\S\ref{void-torus-commutative-cover-local-langlands-correspondence}), the latter defines an L-parameter $\sigma_T \in \Phi(\widetilde T_H)$ with respect to the canonical maximal torus $T_H$ of $H$. The image $\sigma\in\Phi(\widetilde H)$ of $\sigma_T$ satisfies $\Omega(\sigma) = \chi$, by construction of \eqref{eq-local-langlands-correspondence-sharp-center}.
\end{rem}

\subsection{The Pontryagin dual of $K$}
\label{sec-pontryagin-dual-kernel}

\begin{void}
Recall the finite abelian group $K$ associated to $G$ and $\mu$ (\emph{cf.}~\S\ref{void-sharp-center-kernel}). In this subsection, we shall construct a surjective map
\begin{equation}
\label{eq-sharp-kernel-quotient-of-fundamental-group}
	\gamma : (\pi_1 G)_{\Gal_F} \rightarrow \Hom(K, \complexes^{\times}).
\end{equation}

Let us note a consequence of Pontryagin duality.
\end{void}

\begin{lem}
\label{lem-adjoint-of-pairing-isomorphism}
Let $\Lambda_1, \Lambda_2$ be \'etale sheaves of finite free $\integers$-modules over $\Spec F$ equipped with a pairing $c : \Lambda_1 \otimes \Lambda_2 \rightarrow A$. Denote by $\Lambda_1^{\sharp} \subset \Lambda_1$, $\Lambda_2^{\sharp} \subset \Lambda_2$ the kernels of $c$. Then the adjoint of $c$ factors through an isomorphism of \'etale sheaves
\begin{equation}
\label{eq-adjoint-of-pairing-isomorphism}
\Lambda_1/\Lambda_1^{\sharp} \xrightarrow{\simeq} \SHom(\Lambda_2/\Lambda_2^{\sharp}, A).
\end{equation}
\end{lem}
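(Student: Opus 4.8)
The plan is to reduce \eqref{eq-adjoint-of-pairing-isomorphism} to an elementary counting argument. Since a morphism of \'etale sheaves over $\Spec F$ is an isomorphism if and only if it is so on stalks, and since the formation of the kernels $\Lambda_i^{\sharp}$, of the sheaf $\SHom(-, A)$, and of the map in question all commute with pullback to a geometric point, I would first assume that $\Lambda_1$ and $\Lambda_2$ are ordinary finite free $\integers$-modules, forgetting the Galois action. Recall also that $\zeta$ identifies $A$ with $\mu_N(\complexes)$ for $N := |A|$, so $A$ is cyclic of order $N$.

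Next I would write $\alpha : \Lambda_1 \to \SHom(\Lambda_2, A)$ and $\beta : \Lambda_2 \to \SHom(\Lambda_1, A)$ for the two adjoints of $c$; by definition $\Lambda_1^{\sharp} = \Ker\alpha$ and $\Lambda_2^{\sharp} = \Ker\beta$. Thus $\alpha$ factors through an injection $\Lambda_1/\Lambda_1^{\sharp} \hookrightarrow \SHom(\Lambda_2, A)$ whose image lies in $\SHom(\Lambda_2/\Lambda_2^{\sharp}, A)$, because each homomorphism $c(\lambda_1, -) : \Lambda_2 \to A$ kills $\Lambda_2^{\sharp}$ by definition of the latter. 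This produces the arrow \eqref{eq-adjoint-of-pairing-isomorphism} and already shows it is injective; by the symmetric argument, the adjoint $\Lambda_2/\Lambda_2^{\sharp} \hookrightarrow \SHom(\Lambda_1/\Lambda_1^{\sharp}, A)$ of $c$ is also injective.

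The key step will then be a cardinality comparison. Set $M_i := \Lambda_i/\Lambda_i^{\sharp}$. Since $\SHom(\Lambda_j, A)$ is finite and annihilated by $N$, each $M_i$ is finite of exponent dividing $N$; as $A$ is cyclic of order $N$, a direct computation from the structure theorem gives $|\SHom(M_i, A)| = |M_i|$. Hence $|M_1| \le |\SHom(M_2, A)| = |M_2|$ and, by symmetry, $|M_2| \le |\SHom(M_1, A)| = |M_1|$, forcing all these cardinalities to agree; an injection between finite sets of equal size is bijective, so \eqref{eq-adjoint-of-pairing-isomorphism} is an isomorphism. The one point that I expect to require genuine care is the cyclicity of $A$: the identity $|\SHom(M, A)| = |M|$ for $M$ of exponent dividing $|A|$ fails for non-cyclic $A$ (for instance $A = \integers/4 \times \integers/2$ with $M = \integers/4$), so it is essential here that $A$ embeds into $\complexes^{\times}$. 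Everything else is formal.
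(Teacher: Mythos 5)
Your proof is correct, and it follows the statement of the lemma faithfully: the reduction to geometric stalks (so that $\Lambda_1$, $\Lambda_2$ become plain finite free $\integers$-modules) is legitimate, the factorization and the two injections $\Lambda_1/\Lambda_1^{\sharp} \hookrightarrow \SHom(\Lambda_2/\Lambda_2^{\sharp}, A)$ and $\Lambda_2/\Lambda_2^{\sharp} \hookrightarrow \SHom(\Lambda_1/\Lambda_1^{\sharp}, A)$ are immediate from the definitions of the kernels, and the counting identity $|\SHom(M, A)| = |M|$ for $M$ finite of exponent dividing $N = |A|$ with $A$ cyclic closes the argument. You are also right to flag cyclicity of $A$ (equivalently, the existence of the embedding $\zeta : A \rightarrow \complexes^{\times}$ fixed in this section) as the essential input; the lemma is false for non-cyclic $A$, as your example shows.

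Your route after the reduction is genuinely different from the paper's. The paper also passes to a separable closure, but then uses $\zeta$ to identify $\SHom(M, A)$ with the Pontryagin dual $M^{\vee}$ for $N$-torsion $M$, views $\zeta\cdot c$ as a $U_1$-valued pairing, and exploits exactness and involutivity of Pontryagin duality for locally compact groups: the adjoint $\Lambda_1 \rightarrow (\Lambda_2)^{\vee}$ has kernel $\Lambda_1^{\sharp}$, its dual is the other adjoint $\Lambda_2 \rightarrow (\Lambda_1)^{\vee}$ with kernel $\Lambda_2^{\sharp}$, whence the cokernel is $(\Lambda_2^{\sharp})^{\vee}$ and the image is exactly $(\Lambda_2/\Lambda_2^{\sharp})^{\vee}$. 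That argument is slicker and also pins down the cokernel of the adjoint, while yours replaces duality of the non-compact lattices $\integers^n$ by an elementary double-injection-plus-cardinality argument resting on the structure theorem. Since the map being proven an isomorphism is the same canonical one in both cases (the adjoint of $c$), nothing is lost for the later use of the lemma in \eqref{eq-bilinear-forms-induced-isomorphism} and \eqref{eq-sharp-kernel-pontryagin-dual}; your proof is simply more hands-on, the paper's more functorial.
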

\begin{proof}
It suffices to check that \eqref{eq-adjoint-of-pairing-isomorphism} is an isomorphism over a separable closure of $F$, so we may assume that $\Lambda_1$, $\Lambda_2$ are finite free $\integers$-modules rather than sheaves of such.

Since $\Lambda_2/\Lambda^{\sharp}_2$ is $N$-torsion for $N := |A|$, $\zeta$ induces an isomorphism
$$
\SHom(\Lambda_2/\Lambda_2^{\sharp}, A) \xrightarrow{\simeq} (\Lambda_2/\Lambda_2^{\sharp})^{\vee},
$$
where $(\cdot)^{\vee}$ denotes Pontryagin dual, \emph{i.e.}~continuous homomorphisms into the topological group $U_1$ of unit complex numbers.

We view the composite $\zeta\cdot c$ as a $U_1$-valued pairing and consider its adjoint
\begin{equation}
\label{eq-adjoint-of-pairing-zeta-induced}
	\Lambda_1 \rightarrow (\Lambda_2)^{\vee}.
\end{equation}
The kernel of \eqref{eq-adjoint-of-pairing-zeta-induced} equals $\Lambda_1^{\sharp}$. We claim that its cokernel is identified with $(\Lambda_2^{\sharp})^{\vee}$. Indeed, since Pontryagin duality is an exact involution, the dual of \eqref{eq-adjoint-of-pairing-zeta-induced} is $\Lambda_2 \rightarrow (\Lambda_1)^{\vee}$, which has kernel $\Lambda^{\sharp}_2$. The isomorphism \eqref{eq-adjoint-of-pairing-isomorphism} follows.
\end{proof}

\begin{void}
We shall apply Lemma \ref{lem-adjoint-of-pairing-isomorphism} to the pairings $b$ and $b_1$ associated to $Q$. More precisely, applying a Tate twist to \eqref{eq-strict-weyl-invariant-form-extension} and using Lemma \ref{lem-adjoint-of-pairing-isomorphism}, we find a commutative square
\begin{equation}
\label{eq-bilinear-forms-induced-isomorphism}
\begin{tikzcd}[column sep = 1em]
	(\Lambda/\Lambda^{\sharp})(1) \ar[r, "\simeq"]\ar[d] & \SHom(\Lambda/\Lambda^{\sharp}, A) \ar[d] \\
	(\Lambda_{\adjoint}/\Lambda^{\sharp}_{\adjoint})(1) \ar[r, "\simeq"] & \SHom(\Lambda_{\sconn}/\Lambda^{\sharp}_{\sconn}, A)
\end{tikzcd}
\end{equation}
where the horizontal maps are isomorphisms.
\end{void}

\begin{void}\emph{Construction of $\gamma$.}
Taking global sections of \eqref{eq-bilinear-forms-induced-isomorphism} over $\Spec F$, we obtain the commutative square
$$
\begin{tikzcd}[column sep = 1em]
	\Ker(T^{\sharp} \rightarrow T)(F) \ar[r, "\simeq"]\ar[d] & \Hom((\Lambda/\Lambda^{\sharp})_{\Gal_F}, A) \ar[d] \\
	\Ker(T_{\adjoint}^{\sharp} \rightarrow T_{\adjoint})(F) \ar[r, "\simeq"] & \Hom((\Lambda_{\sconn}/\Lambda_{\sconn}^{\sharp})_{\Gal_F}, A) 
\end{tikzcd}
$$
Taking kernels of the vertical maps, we obtain an isomorphism
\begin{equation}
\label{eq-sharp-kernel-pontryagin-dual}
K \xrightarrow{\simeq} \Hom((\pi_1G)_{\Gal_F}/(\pi_1G^{\sharp})_{\Gal_F}, A).
\end{equation}

Since $(\pi_1G)_{\Gal_F}/(\pi_1G^{\sharp})_{\Gal_F}$ is $N$-torsion (for $N := |A|$), its Pontryagin dual is identified with $K$ under \eqref{eq-sharp-kernel-pontryagin-dual}. Applying bi-duality yields a short exact sequence
\begin{equation}
\label{eq-sharp-kernel-pontryagin-dual-as-quotient}
(\pi_1G^{\sharp})_{\Gal_F} \rightarrow (\pi_1G)_{\Gal_F} \xrightarrow{\gamma} \Hom(K, \complexes^{\times}) \rightarrow 1.
\end{equation}

The map \eqref{eq-sharp-kernel-quotient-of-fundamental-group} is defined as the second map displayed in this short exact sequence.
\end{void}

\subsection{The case for $\widetilde G_{\beta}$}

\begin{void}
For each $\beta \in \Isoc_G$, we have a morphism of group $F$-schemes
\begin{equation}
\label{eq-center-mapping-to-extended-pure-inner-form}
Z \rightarrow G_{\beta},
\end{equation}
sending an $R$-point $z$ of $Z$ to the automorphism of the pullback of $\beta$ to $X\times \Spec R$ given by acting by $z$. The image of \eqref{eq-center-mapping-to-extended-pure-inner-form} is central in $G_{\beta}$.

Let $\widetilde G_{\beta}$ denote the image $\mu$ under the construction functor \eqref{eq-construction-of-covers-isocrystal}. Denote by $\widetilde Z_{\beta}$ the pullback of $\widetilde G_{\beta}$ along the map $Z(F) \rightarrow G_{\beta}(F)$ induced from \eqref{eq-center-mapping-to-extended-pure-inner-form} and by $\widetilde Z_{\beta}^{\sharp}$ its further pullback to $Z^{\sharp}(F)$. Thus $\Ker(\widetilde Z^{\sharp}_{\beta} \rightarrow \widetilde Z_{\beta})$ is identified with $K$ along the projection onto $Z^{\sharp}(F)$. This yields an injection
\begin{equation}
\label{eq-obstruction-subgroup-sharp-center-isocrystal}
i_{\beta} : K \rightarrow \widetilde Z^{\sharp}_{\beta}.
\end{equation}

This map specializes to \eqref{eq-obstruction-subgroup-sharp-center} when $\beta$ is the trivial $G$-isocrystal.
\end{void}

\begin{void}
Let us now identify $\widetilde Z_{\beta}$ for any $\beta \in \Isoc_G$.

Recall the bilinear pairing $b_2 \otimes \deloop\Psi^{\otimes 2} : \deloop G_{\abelian} \otimes \deloop Z \rightarrow \deloop^4A(1)$ (\emph{cf.}~\S\ref{void-center-cocenter-pairing-geometric}). Evaluating at the $G_{\abelian}$-isocrystal defined by $\beta$, we find a rigidified morphism
$$
(b_2\otimes \deloop\Psi^{\otimes 2})(\beta, \cdot) : X\times \deloop Z \rightarrow \deloop^4A(1),
$$
whch is canonically the pullback of a rigidified morphism $\deloop Z \rightarrow \deloop^4A(1)$ (\emph{cf.}~Lemma \ref{lem-weil-galois-torsion}), to be denoted using the same expression.

The following result is a consequence of the canonical quadratic structure (\emph{cf.}~Proposition \ref{prop-canonical-quadratic-structure}). Its statement invokes the construction functor \eqref{eq-construction-of-covers} for $Z$.
\end{void}

\begin{prop}
\label{prop-isocrystal-center-cover}
For any $\beta \in \Isoc_G$, there is a canonical isomorphism of covers of $Z(F)$:
\begin{equation}
\label{eq-center-cover-baer-sum}
\widetilde Z_{\beta} \xrightarrow{\simeq} \widetilde Z + \int_F (b_2\otimes \deloop\Psi^{\otimes 2})(\beta, \cdot).
\end{equation}
\end{prop}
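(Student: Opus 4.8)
The plan is to deduce \eqref{eq-center-cover-baer-sum} from the canonical quadratic structure of Proposition \ref{prop-canonical-quadratic-structure}. Write $\iota : Z \to G_{\beta}$ for the morphism \eqref{eq-center-mapping-to-extended-pure-inner-form}, $\mathrm{tr}_{\beta} : X \times \deloop G_{\beta} \to \deloop G$ for the morphism \eqref{eq-isocrystal-classifying-stack-translation-map}, and $\mu_{\beta} := \Translation_{\beta}(\mu)$, so that $\widetilde G_{\beta} = \int_F \mu_{\beta}$. The crucial geometric input is the identification, as morphisms of fppf stacks over $F$, of the composite
$$
X \times \deloop Z \xrightarrow{\id_X \times \deloop\iota} X \times \deloop G_{\beta} \xrightarrow{\mathrm{tr}_{\beta}} \deloop G
$$
with $a \circ (\beta \times \id_{\deloop Z})$, where $a$ is the action map appearing in \eqref{eq-center-action-diagram} and $\beta$ is viewed as a morphism $X \to \deloop G$. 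Both composites send a $Z$-torsor $\mathscr Z$ over an $F$-scheme $S$ to the $G$-torsor over $X \times S$ obtained by twisting the underlying $G$-torsor of $\beta$ by the central $Z$-torsor $\mathscr Z$: for $a \circ (\beta \times \id)$ this is the definition of the $\deloop Z$-action on $\deloop G$, and for the first composite it follows by unwinding the definitions of $\mathrm{tr}_{\beta}$ (the deloop of $X \times G_{\beta} \to X \times_{\deloop G} X$) and of $\iota$ (which sends $z \in Z$ to the automorphism of $\beta$ given by the central action of $z$). I expect this identification, made compatibly with the rigidifications on both sides, to be the main obstacle; the rest is formal.

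Granting it, recall that $\mu_{\beta}$ is characterized by the property that its pullback along $X \times \deloop G_{\beta} \to \deloop G_{\beta}$ equals $\mathrm{tr}_{\beta}^{*}\mu - p^{*}\beta^{*}\mu$, where $p : X \times \deloop G_{\beta} \to X$ is the projection (\emph{cf.}~\eqref{eq-etale-metaplectic-cover-subtraction} and \eqref{eq-isoc-classifying-stack-cohomology}). Restricting along $\id_X \times \deloop\iota$ and invoking the identification above, the pullback of $(\deloop\iota)^{*}\mu_{\beta}$ to $X \times \deloop Z$ becomes $(\beta \times \id)^{*}a^{*}\mu - (p')^{*}\beta^{*}\mu$, where $p' : X \times \deloop Z \to X$ is the projection. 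Now apply Proposition \ref{prop-canonical-quadratic-structure}: pulling its isomorphism $a^{*}\mu \simeq p_1^{*}\mu + p_2^{*}\mu_Z + b_2 \otimes \deloop\Psi^{\otimes 2}$ back along $\beta \times \id_{\deloop Z}$, the summand $p_1^{*}\mu$ becomes $(p')^{*}\beta^{*}\mu$ and cancels, the summand $p_2^{*}\mu_Z$ becomes the pullback of $\mu_Z$ from $\deloop Z$, and $b_2 \otimes \deloop\Psi^{\otimes 2}$ becomes $(b_2 \otimes \deloop\Psi^{\otimes 2})(\beta, \cdot)$. As the latter is canonically the pullback of a rigidified morphism $\deloop Z \to \deloop^4A(1)$ (Lemma \ref{lem-weil-galois-torsion}), and as pullback along $X \times \deloop Z \to \deloop Z$ induces an equivalence on rigidified morphisms into $\deloop^4A(1)$ (the argument behind \eqref{eq-isoc-classifying-stack-cohomology}, now with $\deloop Z$ in place of $\deloop G_{\beta}$), this descends to a canonical isomorphism of rigidified morphisms $\deloop Z \to \deloop^4A(1)$:
$$
(\deloop\iota)^{*}\mu_{\beta} \xrightarrow{\simeq} \mu_Z + (b_2 \otimes \deloop\Psi^{\otimes 2})(\beta, \cdot).
$$

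Finally I would apply the construction functor $\int_F$ of \eqref{eq-construction-of-covers}, for the group $Z$, to this isomorphism. Since $\int_F$ is $\integers$-linear, it sends the Baer sum on the right to the Baer sum of the corresponding covers; since it is functorial with respect to the homomorphism $Z(F) \to G_{\beta}(F)$ induced by $\iota$, the object $\int_F\big((\deloop\iota)^{*}\mu_{\beta}\big)$ is the pullback of $\widetilde G_{\beta} = \int_F\mu_{\beta}$ along $Z(F) \to G_{\beta}(F)$, which is $\widetilde Z_{\beta}$; and likewise $\int_F\mu_Z = \widetilde Z$. This produces the desired isomorphism \eqref{eq-center-cover-baer-sum}, which is canonical since each of the preceding steps is.
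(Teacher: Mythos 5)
Your proposal is correct and follows essentially the same route as the paper: identify the pullback of the translated cover along $\deloop Z \to \deloop G_{\beta}$ with $(a_{\beta})^{*}\mu - p^{*}\beta^{*}\mu$ on $X\times\deloop Z$ (the paper dispatches this as ``by construction,'' noting that looping $a_{\beta}$ recovers \eqref{eq-center-mapping-to-extended-pure-inner-form}), then restrict the canonical quadratic structure \eqref{eq-canonical-quadratic-structure} along $(\beta,\id)$, descend via Lemma \ref{lem-weil-galois-torsion}, and apply the $\integers$-linear functor $\int_F$. The only difference is that you spell out the geometric identification of the two composites more explicitly than the paper does, which is harmless.
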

\begin{proof}
Consider the action map $a : \deloop G \times \deloop Z \rightarrow \deloop G$. Its restriction along the $G$-isocrystal $\beta : X \rightarrow \deloop G$ yields a morphism
\begin{equation}
\label{eq-center-action-fixed-isocrystal}
a_{\beta} : X\times \deloop Z \rightarrow \deloop G.
\end{equation}
The loop space functor applied to \eqref{eq-center-action-fixed-isocrystal} recovers \eqref{eq-center-mapping-to-extended-pure-inner-form}. More precisely, taking fiber product of $X$ with itself over the two stacks in \eqref{eq-center-action-fixed-isocrystal}, we obtain a morphism from $X\times Z$ to the group $X$-sheaf of automorphisms of $\beta$, which is adjoint to \eqref{eq-center-mapping-to-extended-pure-inner-form}.

Let us pull back $\mu$ along the composition of the projection $p : X\times \deloop Z \rightarrow X$ and $\mu : X \rightarrow \deloop G$. By construction, we have an identification of covers of $Z(F)$
\begin{equation}
\label{eq-center-cover-isocrystal-expression}
\widetilde Z_{\beta} \xrightarrow{\simeq} \int_F (a_{\beta})^*\mu - p^*\beta^*\mu.
\end{equation}

It remains to identify the right-hand-sides of \eqref{eq-center-cover-baer-sum} and \eqref{eq-center-cover-isocrystal-expression}. We shall do so by identifying the ``integrands'', \emph{i.e.}~providing an isomorphism
\begin{equation}
\label{eq-quadratic-structure-restriction-to-isocrystal}
(a_{\beta})^*\mu - p^*\beta^*\mu \xrightarrow{\simeq} \mu_Z + (b_2\otimes \deloop\Psi^{\otimes 2})(\beta, \cdot)
\end{equation}
of rigidified morphisms $X\times \deloop Z \rightarrow \deloop^4A(1)$, where $\mu_Z$ denotes the restriction of $\mu$ along the composition $X\times\deloop Z \rightarrow \deloop Z \rightarrow \deloop G$.

The isomorphism \eqref{eq-quadratic-structure-restriction-to-isocrystal} is the restriction of \eqref{eq-canonical-quadratic-structure} along $(\beta, \id) : X\times \deloop Z \rightarrow \deloop G \times\deloop Z$.
\end{proof}

\begin{rem}
It follows from Proposition \ref{prop-isocrystal-center-cover} that the cover $\widetilde Z_{\beta}$ depends only on the $G_{\abelian}$-isocrystal induced from $\beta$.
\end{rem}

\begin{void}
Note that pairing $b_2 \otimes \deloop\Psi^{\otimes 2}$ is canonically trivialized over $\deloop G_{\abelian} \otimes \deloop Z^{\sharp}$ (\emph{cf.}~the proof of Corollary \ref{cor-sharp-center-quadratic-structure}). In particular, the pullback of \eqref{eq-center-cover-baer-sum} along $Z^{\sharp}(F) \rightarrow Z(F)$ yields an isomorphism of covers of $Z^{\sharp}(F)$:
\begin{equation}
\label{eq-sharp-center-cover-identification}
	\widetilde Z_{\beta}^{\sharp} \xrightarrow{\simeq} \widetilde Z^{\sharp}.
\end{equation}

Let us compose the inverse of \eqref{eq-sharp-center-cover-identification} with the natural map $\widetilde Z_{\beta}^{\sharp} \rightarrow \widetilde G_{\beta}$ to obtain a map:
\begin{equation}
\label{eq-sharp-center-map-to-isocrystal}
\widetilde Z^{\sharp} \rightarrow \widetilde G_{\beta},
\end{equation}
\end{void}

\begin{lem}
\label{lem-sharp-center-isocrystal}
The image of \eqref{eq-sharp-center-map-to-isocrystal} is central in $\widetilde G_{\beta}$.
\end{lem}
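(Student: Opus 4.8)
The plan is to follow the same strategy used in the proof of Lemma \ref{lem-sharp-center-commutative}, but now with $\widetilde G_\beta$ in place of $\widetilde G$. The key point is that Corollary \ref{cor-sharp-center-quadratic-structure} produced an isomorphism of bi-rigidified morphisms
$$
(a^{\sharp})^*\mu - (p_1)^*\mu - (p_2)^*\mu_{Z^{\sharp}} \xrightarrow{\simeq} 0
$$
over $\deloop G \times \deloop Z^{\sharp}$, together with its cocycle data \eqref{eq-sharp-center-equivariance-cocycle-diagram}. First I would restrict this isomorphism along $(\beta,\id) : X\times\deloop Z^{\sharp} \rightarrow \deloop G\times\deloop Z^{\sharp}$, exactly as was done with \eqref{eq-canonical-quadratic-structure} in the proof of Proposition \ref{prop-isocrystal-center-cover}. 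This gives an isomorphism of rigidified morphisms $X\times\deloop Z^{\sharp}\rightarrow\deloop^4A(1)$
$$
(a^{\sharp}_\beta)^*\mu - p^*\beta^*\mu \xrightarrow{\simeq} \mu_{Z^{\sharp}},
$$
where $a^{\sharp}_\beta : X\times\deloop Z^{\sharp}\rightarrow\deloop G$ is the restriction of the $\deloop Z^{\sharp}$-action map along $\beta$, whose loop space recovers \eqref{eq-center-mapping-to-extended-pure-inner-form} composed with $Z^{\sharp}\rightarrow Z$.

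Next I would apply the construction functor \eqref{eq-construction-of-covers-isocrystal} (with $G\times Z^{\sharp}$ in the role of $G$, or rather directly using the translation-by-$\beta$ formalism of \S\ref{void-isocrystal-translation-functor}) to this isomorphism. The output is an isomorphism of covers of $G_\beta(F)\times Z^{\sharp}(F)$
$$
(\tilde a^{\sharp}_\beta)^* : (a^{\sharp}_\beta)^*\widetilde G_\beta \xrightarrow{\simeq} (p_1)^*\widetilde G_\beta + (p_2)^*\widetilde Z^{\sharp}_\beta,
$$
whose restrictions to $G_\beta(F)\times e$ and $e\times Z^{\sharp}(F)$ are the evident identifications, and which is compatible with the identification \eqref{eq-sharp-center-cover-identification} of $\widetilde Z^{\sharp}_\beta$ with $\widetilde Z^{\sharp}$ so that the composite map \eqref{eq-sharp-center-map-to-isocrystal} is the one whose image we want to be central. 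As in Lemma \ref{lem-sharp-center-commutative}, this isomorphism is equivalent to a morphism of short exact sequences
$$
\begin{tikzcd}[column sep = 1.2em]
1 \ar[r] & A\times A \ar[r]\ar[d,"\sum"] & \widetilde G_\beta \times \widetilde Z^{\sharp}_\beta \ar[r]\ar[d,"\tilde a^{\sharp}_\beta"] & G_\beta(F)\times Z^{\sharp}(F) \ar[r]\ar[d,"a^{\sharp}_\beta"] & 1 \\
1 \ar[r] & A \ar[r] & \widetilde G_\beta \ar[r] & G_\beta(F) \ar[r] & 1
\end{tikzcd}
$$
with $\tilde a^{\sharp}_\beta$ a group homomorphism restricting to the identity on $\widetilde G_\beta\times e$ and to the natural map on $e\times\widetilde Z^{\sharp}_\beta$. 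Writing $(\tilde g,\tilde z) = (\tilde g,1)\cdot(1,\tilde z) = (1,\tilde z)\cdot(\tilde g,1)$ and applying $\tilde a^{\sharp}_\beta$, one concludes that $\tilde g$ commutes with the image of $\tilde z$ in $\widetilde G_\beta$, which is exactly the image of \eqref{eq-sharp-center-map-to-isocrystal}.

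The main thing to be careful about — and the only nontrivial point beyond copying the argument of Lemma \ref{lem-sharp-center-commutative} — is bookkeeping the compatibility between the isomorphism obtained by restricting Corollary \ref{cor-sharp-center-quadratic-structure} along $(\beta,\id)$ and the identification \eqref{eq-sharp-center-cover-identification} used to define \eqref{eq-sharp-center-map-to-isocrystal}. Concretely, \eqref{eq-sharp-center-cover-identification} was obtained from \eqref{eq-center-cover-baer-sum} by observing that $b_2\otimes\deloop\Psi^{\otimes 2}$ is canonically trivial on $\deloop G_{\abelian}\otimes\deloop Z^{\sharp}$; the present restriction of \eqref{eq-sharp-center-quadratic-structure} uses the \emph{same} trivialization of that pairing (this is precisely how \eqref{eq-sharp-center-quadratic-structure} was deduced from \eqref{eq-canonical-quadratic-structure} in the proof of Corollary \ref{cor-sharp-center-quadratic-structure}), so the two are compatible essentially by construction. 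I would also note, as in \S\ref{void-sharp-center-equivariance-structure}, that the cocycle coherence needed to know $\tilde a^{\sharp}_\beta$ is genuinely a group homomorphism is a \emph{condition} rather than extra data, since the relevant mapping space is $1$-truncated, so no new coherence check is required. The verification that the resulting central embedding does not depend on auxiliary choices is routine and can be omitted.
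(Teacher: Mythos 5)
Your high-level plan---rerun the argument of Lemma \ref{lem-sharp-center-commutative} for $\widetilde G_{\beta}$ using the equivariance data coming from Corollary \ref{cor-sharp-center-quadratic-structure}, and keep track of compatibility with \eqref{eq-sharp-center-cover-identification}---is indeed the paper's strategy. But the concrete step you propose does not supply the input that argument needs. Restricting \eqref{eq-sharp-center-quadratic-structure} along $(\beta,\id) : X\times\deloop Z^{\sharp}\rightarrow\deloop G\times\deloop Z^{\sharp}$ collapses the $\deloop G$-factor to the single point $\beta$: the result is an isomorphism of rigidified morphisms $X\times\deloop Z^{\sharp}\rightarrow\deloop^4A(1)$, and feeding it into the construction/translation formalism can only produce an identification of covers of $Z^{\sharp}(F)$, namely $\widetilde Z^{\sharp}_{\beta}\xrightarrow{\simeq}\widetilde Z^{\sharp}$, which is exactly \eqref{eq-sharp-center-cover-identification} and is already known. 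It cannot yield the asserted isomorphism $(a^{\sharp}_{\beta})^{*}\widetilde G_{\beta}\simeq (p_1)^{*}\widetilde G_{\beta}+(p_2)^{*}\widetilde Z^{\sharp}_{\beta}$ of covers of $G_{\beta}(F)\times Z^{\sharp}(F)$, because you have constructed no datum over $\deloop G_{\beta}\times\deloop Z^{\sharp}$---and that is precisely the direction in which centrality must be tested. This is where your ``output'' is a non sequitur, and it is a genuine gap rather than a bookkeeping issue.

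What is missing is the construction of a $\deloop Z^{\sharp}$-equivariance structure, in the sense of \S\ref{void-sharp-center-equivariance-structure}, on the translated parameter $\mu_{G_{\beta}} := \Translation_{\beta}(\mu) : \deloop G_{\beta}\rightarrow\deloop^4A(1)$ against $\mu_{Z^{\sharp}}$, together with the verification that its restriction to the neutral point of $\deloop G_{\beta}$ is the isomorphism \eqref{eq-sharp-center-isocrystal-restriction} underlying \eqref{eq-sharp-center-cover-identification}, so that the homomorphism $\widetilde Z^{\sharp}\rightarrow\widetilde G_{\beta}$ produced by the short-exact-sequence argument is indeed \eqref{eq-sharp-center-map-to-isocrystal}. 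The paper obtains this by using Lemma \ref{lem-weil-galois-torsion} to reduce to the base change along $X\rightarrow\Spec F$, where $\mu_{G_{\beta}}$ becomes the pullback of $\mu$ along \eqref{eq-isocrystal-classifying-stack-translation-map} minus the constant section $p^{*}\beta^{*}\mu$, and then observing that \eqref{eq-isocrystal-classifying-stack-translation-map} is $\deloop Z^{\sharp}$-equivariant, so the equivariance structure of \S\ref{void-sharp-center-equivariance-structure} pulls back. If you replace your restriction along $(\beta,\id)$ by this step (equivalently, translate the whole of \eqref{eq-sharp-center-equivariance-structure} by the $(G\times Z^{\sharp})$-isocrystal $(\beta,\mathrm{triv})$, which requires exactly that equivariance of \eqref{eq-isocrystal-classifying-stack-translation-map}), then the rest of your argument---the morphism of short exact sequences and the commutation computation---goes through verbatim as in Lemma \ref{lem-sharp-center-commutative}.
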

\begin{proof}
Denote by $\mu_{G_{\beta}} := T_{\beta}(\mu)$ the translation of $\mu$ by $\beta$ (\emph{cf.}~\S\ref{void-isocrystal-translation-functor}). Consider the $\deloop Z^{\sharp}$-action on $\deloop G_{\beta}$ via the inclusion \eqref{eq-center-mapping-to-extended-pure-inner-form}.

By the proof of Lemma \ref{lem-sharp-center-commutative}, it suffices to show that $\mu_{G_{\beta}}$ is $\deloop Z^{\sharp}$-equivariant against $\mu_{Z^{\sharp}} : \deloop Z^{\sharp} \rightarrow \deloop^4A(1)$ in the sense of \S\ref{void-sharp-center-equivariance-structure} and that, upon acting on the neutral point of $\deloop G_{\beta}$, this equivariance structure reduces to the isomorphism
\begin{equation}
\label{eq-sharp-center-isocrystal-restriction}
\mu_{G_{\beta}}|_{\deloop Z^{\sharp}} \xrightarrow{\simeq} \mu_{Z^{\sharp}}
\end{equation}
induced from \eqref{eq-quadratic-structure-restriction-to-isocrystal} and the trivialization of $b_2\otimes \deloop\Psi^{\otimes 2}$ over $\deloop G_{\abelian} \times \deloop Z^{\sharp}$.

By Lemma \ref{lem-weil-galois-torsion}, it suffices to construct the $\deloop Z^{\sharp}$-equivariance structure after base change along $X \rightarrow \Spec F$. The base change of $\mu_{G_{\beta}}$ to $X\times\deloop G_{\beta}$ is the pullback of $\mu$ along \eqref{eq-isocrystal-classifying-stack-translation-map} minus the constant section $p^*\beta^*\mu$. However, \eqref{eq-isocrystal-classifying-stack-translation-map} is $\deloop Z^{\sharp}$-equivariant, so the desired $\deloop Z^{\sharp}$-equivariance structure on $\mu_{G_{\beta}}$ follows from that of $\mu$ (\emph{cf.}~\S\ref{void-sharp-center-equivariance-structure}). The fact that acting on the neutral point of $\deloop G_{\beta}$ recovers the isomorphism \eqref{eq-sharp-center-isocrystal-restriction} is a consequence of the construction of \eqref{eq-sharp-center-equivariance-structure} (which uses the trivialization of $b_2\otimes \deloop\Psi^{\otimes 2}$ over $\deloop G\times \deloop Z^{\sharp}$).
\end{proof}

\begin{void}
By Lemma \ref{lem-sharp-center-isocrystal} and Schur's lemma, we have a map
\begin{equation}
\label{eq-restriction-sharp-center-isocrystal}
\Pi(\widetilde G_{\beta}) \rightarrow \Pi(\widetilde Z^{\sharp})
\end{equation}
sending the isomorphism class $[V]$ of an irreducible $\zeta$-genuine smooth representation $V$ of $\widetilde G_{\beta}$ to the character of $\widetilde Z^{\sharp}$ by which it acts on $V$ through \eqref{eq-sharp-center-map-to-isocrystal}.

The image of $[V]$ under \eqref{eq-restriction-sharp-center-isocrystal} can be viewed as the ``central core character" of $[V]$, generalizing the construction of \S\ref{void-central-core-character}.
\end{void}

\begin{void}
We now arrive at a crucial point: The isomorphism \eqref{eq-sharp-center-cover-identification} is generally \emph{incompatible} with the inclusions of $K$ via $i$ and $i_{\beta}$ (\emph{cf.}~\eqref{eq-obstruction-subgroup-sharp-center}, \eqref{eq-obstruction-subgroup-sharp-center-isocrystal}). In other words, the quotient $i_{\beta}/i$ factors through a character
\begin{equation}
\label{eq-isocrystal-sharp-kernel-difference}
K \rightarrow A.
\end{equation}

We express this character in terms of the Kottwitz invariant of $\beta$ (\emph{cf.}~\S\ref{void-kottwitz-invariant}).
\end{void}

\begin{thm}
\label{thm-inclusion-difference-kottwitz-invariant}
The character \eqref{eq-isocrystal-sharp-kernel-difference} equals the image of $\Kottwitz(\beta)$ under \eqref{eq-sharp-kernel-quotient-of-fundamental-group}, \emph{i.e.}
\begin{equation}
\label{eq-inclusion-difference-kottwitz-invariant}
\frac{i_{\beta}}{i} = \gamma(\Kottwitz(\beta)).
\end{equation}
\end{thm}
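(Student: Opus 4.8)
The strategy is to reduce the equality \eqref{eq-inclusion-difference-kottwitz-invariant} to a statement purely about the cover $\int_F\nu_\beta$ of $Z(F)$, where $\nu_\beta:=(b_2\otimes\deloop\Psi^{\otimes 2})(\beta,\cdot)$, and then to recognize the resulting character as the output of the construction of $\gamma$ in \S\ref{sec-pontryagin-dual-kernel}; the point is that both sides of \eqref{eq-inclusion-difference-kottwitz-invariant} are in the end the adjoint of the pairing $b_2$ restricted to $\Lambda^{\sharp}$ and evaluated at $\Kottwitz(\beta)$. First I would trace the isomorphism \eqref{eq-sharp-center-cover-identification} by hand. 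Write $C_\beta:=\int_F\nu_\beta$ and $C_\beta^{\sharp}$ for its pullback to $Z^{\sharp}(F)$, so that Proposition \ref{prop-isocrystal-center-cover} gives $\widetilde Z_\beta\simeq\widetilde Z+C_\beta$, hence $\widetilde Z_\beta^{\sharp}\simeq\widetilde Z^{\sharp}+C_\beta^{\sharp}$, while the trivialization of $b_2\otimes\deloop\Psi^{\otimes 2}$ over $\deloop G_{\abelian}\otimes\deloop Z^{\sharp}$ used in the proof of Corollary \ref{cor-sharp-center-quadratic-structure} supplies a canonical splitting $s_\beta\colon Z^{\sharp}(F)\to C_\beta^{\sharp}$. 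A direct computation with the Baer sum --- describing the kernel of $\widetilde Z^{\sharp}+C_\beta^{\sharp}\to\widetilde Z+C_\beta$ in terms of the canonical section $i\colon K\to\widetilde Z^{\sharp}$ of \eqref{eq-obstruction-subgroup-sharp-center} and the analogous section $j_\beta\colon K\to C_\beta^{\sharp}$ of $\Ker(C_\beta^{\sharp}\to C_\beta)$ --- then shows that, under \eqref{eq-sharp-center-cover-identification}, $i_\beta$ and $i$ differ over $K$ by the character $\phi\colon K\to A$ sending $k$ to the image of $s_\beta(k)$ under $C_\beta^{\sharp}\to C_\beta$. (This image lands in the fiber $A$ over $1\in Z(F)$ because $k$ dies in $Z(F)$.) So \eqref{eq-isocrystal-sharp-kernel-difference} is $\phi$ up to inversion, and the remaining task is to compute $\phi$.

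Second, I would give a cohomological formula for $\phi$. By the definition of the construction functor $\int_F$ (as in \S\ref{sec-construction-of-covers}, and as exploited for linear data in \S\ref{sec-torus-linear-cover}), $C_\beta$ arises by evaluating the fiber sequence $\deloop^2 A(1)\to\Fib(\Omega\nu_\beta)\to Z$ over $\Spec F$, using $H^3(\Spec F,A(1))=0$ and Tate duality $H^2(\Spec F,A(1))\cong A$. The splitting $s_\beta$ corresponds to a lift $Z^{\sharp}\to\Fib(\Omega\nu_\beta)$ of $Z^{\sharp}\to Z$, i.e.\ to the null-homotopy $t$ of $\Omega\nu_\beta|_{\deloop Z^{\sharp}}$ coming from Corollary \ref{cor-sharp-center-quadratic-structure}. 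Restricting that lift to $\mathcal K:=\Ker(Z^{\sharp}\to Z)$ and comparing it with the tautological null-homotopy of $\Omega\nu_\beta|_{\mathcal K}$ --- tautological because $\mathcal K\to Z$ is the zero map --- produces a morphism $\bar\ell_\beta\colon\mathcal K\to\deloop^2 A(1)$, and unwinding the identifications above shows that $\phi(k)$ is the class in $H^2(\Spec F,A(1))\cong A$ of the pullback of $\bar\ell_\beta$ along $k\colon\Spec F\to\mathcal K$, for $k\in K=\mathcal K(F)$.

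Third, I would identify $\bar\ell_\beta$. By Proposition \ref{prop-canonical-quadratic-structure}, $\nu_\beta$ is the $\deloop\Psi$-twist of the composite of the adjoint map \eqref{eq-center-cocenter-pairing-adjoint} $\Fib(\Lambda\to\Lambda_{\adjoint})\to\SHom(\pi_1 G,A(-1))$ with evaluation at the $G_{\abelian}$-isocrystal induced by $\beta$, and the null-homotopy $t$ over $\deloop Z^{\sharp}$ is the one given by the vanishing of the two horizontal arrows of \eqref{eq-strict-weyl-invariant-form-extension} on $\Lambda^{\sharp}$ and on $\Lambda^{\sharp}_{\adjoint}$ (as in the proof of Corollary \ref{cor-sharp-center-quadratic-structure}). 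Hence $\bar\ell_\beta$ is exactly the restriction to $\mathcal K=\Fib(\Lambda^{\sharp}\to\Lambda^{\sharp}_{\adjoint})$ of the $\deloop\Psi$-twist of the adjoint of $b_2$, evaluated at $\Kottwitz(\beta)$; in particular the higher-homotopy part of the $G_{\abelian}$-isocrystal induced by $\beta$ does not contribute, since it enters $\nu_\beta$ only through a summand that is already trivialized over $\deloop Z^{\sharp}$ compatibly with the tautological trivialization over $\mathcal K$, so that $\phi$ --- and thus $i_\beta/i$ --- depends on $\beta$ only through $\Kottwitz(\beta)$. On the other hand $\gamma$ is assembled in \S\ref{sec-pontryagin-dual-kernel} from the isomorphisms \eqref{eq-bilinear-forms-induced-isomorphism}, obtained by Tate-twisting the same diagram \eqref{eq-strict-weyl-invariant-form-extension} and applying Lemma \ref{lem-adjoint-of-pairing-isomorphism}, followed by the biduality sequence \eqref{eq-sharp-kernel-pontryagin-dual-as-quotient}; concretely, under the identification $K\cong\Hom((\pi_1 G)_{\Gal_F}/(\pi_1 G^{\sharp})_{\Gal_F},A)$ of \eqref{eq-sharp-kernel-pontryagin-dual} one has $\gamma(c)(k)=\zeta(\langle k,c\rangle)$. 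Matching the two resulting pairings $K\otimes(\pi_1 G)_{\Gal_F}\to A$ --- each being the adjoint of $b_2$ restricted to $\Lambda^{\sharp}$ --- yields \eqref{eq-inclusion-difference-kottwitz-invariant}.

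The hard part is precisely this last matching. One of the two pairings is produced arithmetically, through the construction functor $\int_F$, the $\deloop\Psi$-twist, the fiber $\Fib(\Omega\nu_\beta)$ with its degree shift, and Tate duality $H^2(\Spec F,A(1))\cong A$; the other is the purely lattice-theoretic Pontryagin duality of finite $\Gal_F$-modules underlying $\gamma$. Their coincidence rests on the compatibility between Artin reciprocity and the cup product with Kummer classes --- essentially the commutative square \eqref{eq-artin-symbol-via-kummer} --- together with the functorial isomorphism $\pi_0(\Isoc_T)\cong(X_*T)_{\Gal_F}$ that enters the definition of $\Kottwitz$ in \S\ref{void-kottwitz-invariant}. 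Carrying all the Tate twists, the degree shifts, and the signs from Tate duality through to a clean equality is where the genuine work of the proof lies; a secondary but necessary point, already flagged above, is that $i_\beta/i$ sees $\beta$ only through its Kottwitz invariant even though $\widetilde Z_\beta$ itself does not (cf.\ the remark after Proposition \ref{prop-isocrystal-center-cover}).
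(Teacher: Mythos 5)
Your proposal follows essentially the same route as the paper's proof: you reduce $i_{\beta}/i$ to the restriction to $K$ of the canonical $Z^{\sharp}(F)$-splitting of $\int_F(b_2\otimes\deloop\Psi^{\otimes 2})(\beta,\cdot)$, identify the resulting character with the class in $H^2(\Spec F, A(1))\cong A$ of the pairing of $\beta$ against the image of $K$ under \eqref{eq-sharp-kernel-pontryagin-dual}, and conclude by the compatibility of the Kottwitz invariant with Tate duality. The ``final matching'' you flag as the genuine work is exactly the paper's diagram \eqref{eq-kottwitz-invariant-compatibility-with-tate-duality}, which is handled there by reducing to tori (by the construction of $\Kottwitz$) and then to $\mathbb G_m$ (by functoriality), where it holds by definition.
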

\begin{proof}
Using the isomorphism \eqref{eq-center-cover-baer-sum} and the $\integers$-linear structure on $\Cov(Z(F), A)$, we may express \eqref{eq-isocrystal-sharp-kernel-difference} as follows: Consider the cover
$$
\int_F(b_2 \otimes \deloop\Psi^{\otimes 2})(\beta, \cdot) \in \Cov(Z(F), A)
$$
equipped with the splitting over $Z^{\sharp}(F)$ defined by the trivialization of $(b_2\otimes \deloop\Psi^{\otimes 2})(\beta, \cdot)$ over $\deloop Z^{\sharp}$. The restriction of this splitting to $K$ is the character \eqref{eq-isocrystal-sharp-kernel-difference}.

The $\integers$-linear morphism $b_2\otimes \Psi^{\otimes 2} : \deloop G_{\abelian} \otimes \deloop Z \rightarrow \deloop^4A(1)$ is trivialized over $\deloop G_{\abelian} \otimes \deloop Z^{\sharp}$, so by taking fibers, we obtain a pairing
$$
\langle\cdot, \cdot\rangle : \deloop G_{\abelian}\otimes \Fib(Z^{\sharp} \rightarrow Z) \rightarrow \deloop^2A(1).
$$

This pairing encodes the character \eqref{eq-isocrystal-sharp-kernel-difference} in the following manner: Given $\beta : X \rightarrow \deloop G_{\abelian}$ and $a \in K \cong H^0(\Spec F, \Fib(Z^{\sharp} \rightarrow Z))$, the class of $\langle\beta, a\rangle$ in
\begin{equation}
\label{eq-tate-duality-for-isoc}
H^2(X, A(1)) \cong H^2(\Spec F, A(1)) \cong A
\end{equation}
is the image of $a$ under \eqref{eq-isocrystal-sharp-kernel-difference}. Here, the isomorphisms are given by pullback along $X \rightarrow \Spec F$ (\emph{cf.}~Lemma \ref{lem-weil-galois-torsion}) and Tate duality.

On the other hand, $\langle\cdot, a\rangle : \deloop G_{\abelian} \rightarrow \deloop^2A(1)$ is the tensor product of a $\integers$-linear map $\pi_1 G \rightarrow A$ with the Kummer map. By construction, this $\integers$-linear map is the image of $a\in K$ under \eqref{eq-sharp-kernel-pontryagin-dual}. The desired equality \eqref{eq-inclusion-difference-kottwitz-invariant} thus reduces to the following compatibility between Kottwitz invariant \eqref{eq-kottwitz-invariant} and Tate duality: Given any map of \'etale sheaves $f : \pi_1 G \rightarrow A$, the following diagram commutes
\begin{equation}
\label{eq-kottwitz-invariant-compatibility-with-tate-duality}
\begin{tikzcd}[column sep = 1em]
	\pi_0\Isoc_{G_{\abelian}} \ar[r, "\Kottwitz"]\ar[d, "f \otimes \Psi"] & (\pi_1 G)_{\Gal_F} \ar[d, "f"] \\
	H^2(X, A(1)) \ar[r, "\eqref{eq-tate-duality-for-isoc}"] & A
\end{tikzcd}
\end{equation}
Here, the left vertical arrow is induced from $f\otimes \Psi : G_{\abelian} \rightarrow \deloop A(1)$. The commutativity of \eqref{eq-kottwitz-invariant-compatibility-with-tate-duality} reduces to the case where $G$ is a torus by construction (\emph{cf.}~\S\ref{void-kottwitz-invariant}), then to the case where $G = \mathbb G_m$ by functoriality (\emph{cf.}~\cite[\S2]{MR809866}), where it follows from the definition.
\end{proof}

\begin{void}
Finally, let us define (the generalized) Weissman's obstruction
$$
\Omega_{\beta} : \Phi(\widetilde H) \rightarrow \Hom(K, \complexes^{\times})
$$
for an arbitrary $G$-isocrystal $\beta$.

We set $\Omega_{\beta}$ to be the composition
\begin{align*}
\Omega_{\beta} : \Phi(\widetilde H) & \rightarrow \Phi(\widetilde H_{\abelian}) \\
& \xrightarrow{\simeq} \Pi(\widetilde Z^{\sharp}) \xrightarrow{i_{\beta}^*} \Hom(K, \complexes^{\times}).
\end{align*}
The proof of Lemma \ref{lem-weissman-obstruction} also yields the following result.
\end{void}

\begin{lem}
\label{lem-weissman-obstruction-isocrystal}
Suppose that there is a map $\LLC_{\beta} : \Pi(\widetilde G_{\beta}) \rightarrow \Phi(\widetilde H)$ satisfying the following compatibility with central core characters: It renders the diagram
\begin{equation}
\label{eq-local-langlands-compatibility-with-central-character-isocrystal}
\begin{tikzcd}
	\Pi(\widetilde G_{\beta}) \ar[r, "\LLC_{\beta}"]\ar[d, "\eqref{eq-restriction-sharp-center-isocrystal}"] & \Phi(\widetilde H) \ar[d, "\eqref{eq-langlands-parameter-functoriality-cocenter}"] \\
	\Pi(\widetilde Z^{\sharp}) \ar[r, "\eqref{eq-local-langlands-correspondence-sharp-center}"] & \Phi(\widetilde H_{\abelian})
\end{tikzcd}
\end{equation}
commutative. Then for any $\sigma \in \Phi(\widetilde H)$ with $\Omega_{\beta}(\sigma)\neq 1$, the set $\LLC_{\beta}^{-1}(\sigma)$ is empty.\qed
\end{lem}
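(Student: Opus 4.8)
The plan is to adapt, essentially verbatim, the proof of Lemma~\ref{lem-weissman-obstruction}, substituting the isocrystal-twisted objects for the untwisted ones. Concretely, I would fix $\sigma \in \Phi(\widetilde H)$ with $\Omega_{\beta}(\sigma) \neq 1$, assume for contradiction that there is an irreducible $\zeta$-genuine smooth representation $V$ of $\widetilde G_{\beta}$ with $\LLC_{\beta}([V]) = \sigma$, and deduce $\Omega_{\beta}(\sigma) = 1$.

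The first step produces the central core character of $[V]$. By Lemma~\ref{lem-sharp-center-isocrystal}, the image of $\widetilde Z^{\sharp}$ in $\widetilde G_{\beta}$ under \eqref{eq-sharp-center-map-to-isocrystal} is central, so by Schur's lemma $\widetilde Z^{\sharp}$ acts on $V$ through a $\zeta$-genuine character $\chi$; this $\chi$ is by definition the image of $[V]$ under \eqref{eq-restriction-sharp-center-isocrystal}. Chasing the commutative square \eqref{eq-local-langlands-compatibility-with-central-character-isocrystal}, $\chi$ maps to the image of $\sigma$ under \eqref{eq-langlands-parameter-functoriality-cocenter} via the isomorphism \eqref{eq-local-langlands-correspondence-sharp-center}; unwinding the definition of $\Omega_{\beta}$, this says precisely that the restriction of $\chi$ along $i_{\beta}$ (transported to a character of $\widetilde Z^{\sharp}_{\beta}$ via the identification \eqref{eq-sharp-center-cover-identification} implicit in the formula for $\Omega_{\beta}$) equals $\Omega_{\beta}(\sigma)$.

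The second step computes the same restriction directly and finds it trivial. Here the key point --- the analogue of ``the image of $K$ in $\widetilde G$ is trivial'' from the proof of Lemma~\ref{lem-weissman-obstruction} --- is that $i_{\beta}$ realizes $K$ as $\Ker(\widetilde Z^{\sharp}_{\beta} \to \widetilde Z_{\beta})$ (\emph{cf.}~\eqref{eq-obstruction-subgroup-sharp-center-isocrystal}), and $\widetilde Z_{\beta}$ maps naturally to $\widetilde G_{\beta}$, so the composite $K \xrightarrow{i_{\beta}} \widetilde Z^{\sharp}_{\beta} \to \widetilde Z_{\beta} \to \widetilde G_{\beta}$ is the trivial homomorphism. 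Since $V$ is a representation of $\widetilde G_{\beta}$, it follows that $K$ acts trivially on $V$, i.e.\ the restriction of $\chi$ along $i_{\beta}$ is the trivial character. Combined with the first step this forces $\Omega_{\beta}(\sigma) = 1$, contradicting the hypothesis; hence $\LLC_{\beta}^{-1}(\sigma) = \emptyset$.

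I do not anticipate a genuine obstacle: all the substantive work has already been done --- the centrality statement (Lemma~\ref{lem-sharp-center-isocrystal}), the construction of \eqref{eq-sharp-center-map-to-isocrystal} through \eqref{eq-sharp-center-cover-identification}, and the measurement of the discrepancy between $i$ and $i_{\beta}$ by the Kottwitz invariant (Theorem~\ref{thm-inclusion-difference-kottwitz-invariant}). The only care required is bookkeeping: checking that ``restriction along $i_{\beta}$'' on the Galois side (as in the definition of $\Omega_{\beta}$) agrees with restricting the $\widetilde Z^{\sharp}$-action on $V$ through \eqref{eq-sharp-center-map-to-isocrystal}, which is immediate once one recalls that \eqref{eq-sharp-center-map-to-isocrystal} is the composite of the inverse of \eqref{eq-sharp-center-cover-identification} with the natural map $\widetilde Z^{\sharp}_{\beta} \to \widetilde G_{\beta}$.
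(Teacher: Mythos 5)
Your proposal is correct and is exactly the argument the paper intends: the lemma is stated with \qed precisely because the proof of Lemma \ref{lem-weissman-obstruction} carries over verbatim, with $i_{\beta}$, \eqref{eq-restriction-sharp-center-isocrystal} and \eqref{eq-sharp-center-map-to-isocrystal} replacing their untwisted counterparts, and the key point being that $K=\Ker(\widetilde Z^{\sharp}_{\beta}\rightarrow\widetilde Z_{\beta})$ has trivial image in $\widetilde G_{\beta}$. Your bookkeeping remark that $i_{\beta}^*$ in the definition of $\Omega_{\beta}$ is taken through the identification \eqref{eq-sharp-center-cover-identification}, matching the $\widetilde Z^{\sharp}$-action via \eqref{eq-sharp-center-map-to-isocrystal}, is precisely the only point that needs checking.
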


\begin{cor}
\label{cor-weissman-obstruction-kottwitz-invariant}
For each $\sigma \in \Phi(\widetilde H)$ and $\beta \in \Isoc_G$, the character $\Omega_{\beta}(\sigma)$ vanishes if and only if
\begin{equation}
\label{eq-weissman-obstruction-kottwitz-invariant}
\gamma(\Kottwitz(\beta)) = \Omega(\sigma)^{-1}.
\end{equation}
\end{cor}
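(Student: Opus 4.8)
The plan is to deduce this directly from Theorem~\ref{thm-inclusion-difference-kottwitz-invariant}: the maps $\Omega$ and $\Omega_{\beta}$ are assembled from identical data, differing only in whether the final restriction is taken along $i$ or along $i_{\beta}$. Concretely, I would fix $\sigma \in \Phi(\widetilde H)$ and let $\chi_{\sigma} \in \Pi(\widetilde Z^{\sharp})$ be its image under the composite $\Phi(\widetilde H) \to \Phi(\widetilde H_{\abelian}) \xrightarrow{\simeq} \Pi(\widetilde Z^{\sharp})$ obtained from \eqref{eq-langlands-parameter-functoriality-cocenter} and the inverse of \eqref{eq-local-langlands-correspondence-sharp-center}. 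This composite occurs in the definitions of both $\Omega$ and $\Omega_{\beta}$, so the \emph{same} $\chi_{\sigma}$ enters each, and by construction $\Omega(\sigma) = i^{*}\chi_{\sigma}$ while $\Omega_{\beta}(\sigma) = i_{\beta}^{*}\chi_{\sigma}$, where in the latter $i_{\beta}$ is regarded as a homomorphism $K \to \widetilde Z^{\sharp}$ via the identification \eqref{eq-sharp-center-cover-identification}.

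Next I would compare the two restrictions using genuineness. Both $i$ and $i_{\beta}$ are group homomorphisms lifting the fixed inclusion $K \hookrightarrow Z^{\sharp}(F)$ along the central extension $1 \to A \to \widetilde Z^{\sharp} \to Z^{\sharp}(F) \to 1$, so $i_{\beta}(k) = i(k)\cdot c(k)$ for a character $c := i_{\beta}/i \colon K \to A$, which is precisely the character \eqref{eq-isocrystal-sharp-kernel-difference}. Since $\chi_{\sigma}$ is $\zeta$-genuine, it restricts to $\zeta$ on $A$, whence $\Omega_{\beta}(\sigma) = i_{\beta}^{*}\chi_{\sigma} = (i^{*}\chi_{\sigma})\cdot(\zeta \circ c) = \Omega(\sigma)\cdot\gamma(\Kottwitz(\beta))$ in $\Hom(K, \complexes^{\times})$, the last equality being Theorem~\ref{thm-inclusion-difference-kottwitz-invariant} read through $\zeta$. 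The asserted equivalence $\Omega_{\beta}(\sigma) = 1$ if and only if $\gamma(\Kottwitz(\beta)) = \Omega(\sigma)^{-1}$ then follows at once.

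I expect no real obstacle here; all the substance is carried by Theorem~\ref{thm-inclusion-difference-kottwitz-invariant}, whose proof in turn rests on the canonical quadratic structure (Proposition~\ref{prop-canonical-quadratic-structure}) and the compatibility of the Kottwitz invariant with Tate duality. The only points needing a line of care are the bookkeeping confirming that $\Omega$ and $\Omega_{\beta}$ share the same $\chi_{\sigma}$, and the consistent use of the identification $A \cong \mu_{N}(\complexes)$ furnished by $\zeta$ when equating the $A$-valued character $i_{\beta}/i$ with the $\complexes^{\times}$-valued character $\gamma(\Kottwitz(\beta))$.
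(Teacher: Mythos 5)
Your proposal is correct and follows essentially the same route as the paper: the paper's proof likewise observes that $\Omega_{\beta}(\sigma)/\Omega(\sigma)$ is the character $i_{\beta}/i$ viewed in $\Hom(K,\complexes^{\times})$, invokes Theorem~\ref{thm-inclusion-difference-kottwitz-invariant} to identify it with $\gamma(\Kottwitz(\beta))$, and concludes. Your extra bookkeeping about the shared $\chi_{\sigma}$ and the use of $\zeta$-genuineness to convert the $A$-valued character $i_{\beta}/i$ into a $\complexes^{\times}$-valued one is exactly what the paper leaves implicit.
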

\begin{proof}
The quotient $\Omega_{\beta}(\sigma)/\Omega(\sigma)$ is the character $i_{\beta}/i : K \rightarrow \complexes^{\times}$. By Theorem \ref{thm-inclusion-difference-kottwitz-invariant}, the latter equals $\gamma(\Kottwitz(\beta))$. Hence the equality \eqref{eq-weissman-obstruction-kottwitz-invariant} holds if and only if $\Omega_{\beta}(\sigma) = 1$.
\end{proof}

\begin{void}
\label{void-weissman-obstruction-vanishing-existence}
To conclude, let us rewrite the short exact sequence \eqref{eq-sharp-kernel-pontryagin-dual-as-quotient} using \eqref{eq-basic-isocrystal-kottwitz-invariant-bijection}:
$$
\pi_0(\Basic_{G^{\sharp}}) \rightarrow \pi_0(\Basic_G) \rightarrow \Hom(K, \complexes^{\times}) \rightarrow 1.
$$
Here, the middle arrow sends $\beta$ to $\gamma(\Kottwitz(\beta))$. The group structure on $\pi_0(\Basic_G)$ is induced from that on $(\pi_1G)_{\Gal_F}$, and similarly for $\pi_0(\Basic_{G^{\sharp}})$.

Given $\sigma \in \Phi(\widetilde H)$, Corollary \ref{cor-weissman-obstruction-kottwitz-invariant} shows that there exists a basic $G$-isocrystal $\beta$ for which \eqref{eq-weissman-obstruction-kottwitz-invariant} holds. Furthermore, the subset of $\pi_0(\Basic_G)$ consisting of isomorphism classes of such $\beta$ forms a torsor under the image of $\pi_0(\Basic_{G^{\sharp}})$.

By Remark \ref{cor-weissman-obstruction-kottwitz-invariant}, the character $\Omega(\sigma)^{-1}$ of $K$ is arbitrary as $\sigma$ varies. This means that to guarantee the equality \eqref{eq-weissman-obstruction-kottwitz-invariant}, one really needs to consider basic $G$-isocrystals spanning a full set of representatives of $\pi_0(\Basic_G)/\pi_0(\Basic_{G^{\sharp}})$.
\end{void}

\subsection{Example: tori}
\label{sec-example-tori}

\begin{void}
In this subsection, we specialize to the case $G = T$ is an $F$-torus. We shall construct the local Langlands correspondence (\emph{cf.}~Conjecture \ref{conj-enhanced-local-langlands-correspondence}) for $T$.

More precisely, for each $\beta \in \Isoc_T$, we shall construct a map
\begin{equation}
\label{eq-local-langlands-correspondence-tori-fixed-isocrystal}
\LLC_{\beta} : \Pi(\widetilde T_{\beta}) \rightarrow \Phi(\widetilde H).
\end{equation}

In fact, $\LLC_{\beta}$ is uniquely determined by the compatibility diagram \eqref{eq-local-langlands-compatibility-with-central-character-isocrystal} since \eqref{eq-langlands-parameter-functoriality-cocenter} becomes an isomorphism in this case, so let us turn this into a definition.
\end{void}

\begin{void}\emph{Construction of $\LLC_{\beta}$.}
The map \eqref{eq-sharp-center-map-to-isocrystal} specializes to a map
\begin{equation}
\label{eq-restriction-sharp-center-isocrystal-torus}
\widetilde T^{\sharp} \rightarrow \widetilde T_{\beta}
\end{equation}
whose image is central (\emph{cf.}~Lemma \ref{lem-sharp-center-isocrystal}). Thus, given any irreducible $\zeta$-genuine smooth representation $V$ of $\widetilde T_{\beta}$, the action of $\widetilde T^{\sharp}$ on $V$ through \eqref{eq-restriction-sharp-center-isocrystal-torus} is a $\zeta$-genuine smooth character. This defines a map
\begin{equation}
\label{eq-torus-central-core-character}
\Pi(\widetilde T_{\beta}) \rightarrow \Pi(\widetilde T^{\sharp}).
\end{equation}

The map $\LLC_{\beta}$ is the composition of \eqref{eq-torus-central-core-character} with the local Langlands correspondence for $T^{\sharp}$ equipped with the restriction $\mu_{T^{\sharp}}$ of $\mu$ (\emph{cf.}~\S\ref{void-torus-commutative-cover-local-langlands-correspondence}).
\end{void}

\begin{void}
The following description of $\LLC_{\beta}^{-1}(\sigma)$ generalizes Weissman's result for the trivial $T$-isocrystal $\beta$ (\emph{cf.}~\cite[Theorem 2.12]{MR3595494}), with the same proof.
\end{void}

\begin{prop}
\label{prop-local-langlands-correspondence-tori}
Given $\beta \in \Isoc_T$ and $\sigma \in \Phi(\widetilde H)$, the set $\LLC_{\beta}^{-1}(\sigma)$ is finite and nonempty if and only if \eqref{eq-weissman-obstruction-kottwitz-invariant} holds.
\end{prop}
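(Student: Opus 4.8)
The plan is to read everything off the construction of $\LLC_{\beta}$: it is the composite of the central-core-character map $\eqref{eq-torus-central-core-character}$, call it $r$, with the local Langlands bijection $\Pi(\widetilde T^{\sharp}) \xrightarrow{\simeq} \Phi(\widetilde H)$ for $(T^{\sharp},\mu_{T^{\sharp}})$ supplied by Theorem \ref{thm-torus-commutative-cover-duality} (for a torus $H$ is a torus, so $\Phi(\widetilde H)=\Phi(\widetilde H_{\abelian})$ and this is $\eqref{eq-local-langlands-correspondence-sharp-center}^{-1}$). Since the second factor is a bijection, for $\sigma\in\Phi(\widetilde H)$ with corresponding genuine character $\chi_{\sigma}$ of $\widetilde T^{\sharp}$, the set $\LLC_{\beta}^{-1}(\sigma)$ is in bijection with $r^{-1}(\chi_{\sigma})$, i.e.\ with the set of genuine smooth characters of $\widetilde T_{\beta}$ whose pullback along $\phi\colon\widetilde T^{\sharp}\to\widetilde T_{\beta}$, the map $\eqref{eq-restriction-sharp-center-isocrystal-torus}$, equals $\chi_{\sigma}$. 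So the whole statement reduces to describing the image and kernel of $\phi$.

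Next I would pin down $\phi$. By construction it is the composite $\widetilde T^{\sharp}\xleftarrow{\simeq}\widetilde T^{\sharp}_{\beta}\to\widetilde T_{\beta}$ of $\eqref{eq-sharp-center-cover-identification}$ with the projection of covers, hence a morphism of central extensions by $A$; its kernel is the image of $\eqref{eq-obstruction-subgroup-sharp-center-isocrystal}$ transported through $\eqref{eq-sharp-center-cover-identification}$, which is exactly the embedding $i_{\beta}\colon K\hookrightarrow\widetilde T^{\sharp}$ used in the definition of $\Omega_{\beta}$, and $i_{\beta}(K)\cap A=\{1\}$ since $i_{\beta}(K)$ maps isomorphically onto $K\subset T^{\sharp}(F)$. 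Since $T_{\beta}\cong T$ canonically via $\eqref{eq-center-mapping-to-extended-pure-inner-form}$ (every $T$-isocrystal being basic and tori having no nontrivial inner forms), the image of $\phi$ is the preimage in $\widetilde T_{\beta}$ of the image of $T^{\sharp}(F)\to T(F)$. Because $\Lambda/\Lambda^{\sharp}$ is $|A|$-torsion and $|A|$ is invertible in $F$, the isogeny $T^{\sharp}\to T$ is étale, so $T^{\sharp}(F)\to T(F)$ has open image of finite index; hence $\operatorname{image}(\phi)\cong\widetilde T^{\sharp}/i_{\beta}(K)$ is an open, finite-index subgroup of $\widetilde T_{\beta}$ containing $A$ with its standard embedding.

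With this in hand the two equivalences are short. An element of $r^{-1}(\chi_{\sigma})$ exists only if $\chi_{\sigma}$ kills $\ker\phi=i_{\beta}(K)$, i.e.\ only if $\Omega_{\beta}(\sigma)=i_{\beta}^{*}\chi_{\sigma}$ vanishes — this recovers Lemma \ref{lem-weissman-obstruction-isocrystal} for our $\LLC_{\beta}$, whose diagram $\eqref{eq-local-langlands-compatibility-with-central-character-isocrystal}$ commutes by construction. Conversely, if $\Omega_{\beta}(\sigma)=1$ then $\chi_{\sigma}$ descends to a smooth genuine character of the open finite-index subgroup $\widetilde T^{\sharp}/i_{\beta}(K)$ of $\widetilde T_{\beta}$; since $\complexes^{\times}$ is an injective abelian group this extends to a homomorphism $\widetilde T_{\beta}\to\complexes^{\times}$, automatically smooth (it kills a compact open subgroup, which is open in $\widetilde T_{\beta}$) and genuine (it restricts to $\zeta$ on $A$), hence lies in $r^{-1}(\chi_{\sigma})$. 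Together with Corollary \ref{cor-weissman-obstruction-kottwitz-invariant} this gives that $\LLC_{\beta}^{-1}(\sigma)$ is nonempty if and only if $\Omega_{\beta}(\sigma)=1$ if and only if $\eqref{eq-weissman-obstruction-kottwitz-invariant}$ holds. For finiteness: when nonempty, any two elements of $\LLC_{\beta}^{-1}(\sigma)$ differ by a smooth character of $\widetilde T_{\beta}$ trivial on $\operatorname{image}(\phi)$ and on $A$, i.e.\ by a character of the finite group $T(F)/\operatorname{image}(T^{\sharp}(F)\to T(F))$, so the fiber is a torsor under a finite group and is finite. The main obstacle I anticipate is the bookkeeping of the second paragraph — matching $\ker\phi$ with the precise copy of $K$ inside $\widetilde T^{\sharp}$ that enters $\Omega_{\beta}$, which is the dual incarnation of the content packaged in Theorem \ref{thm-inclusion-difference-kottwitz-invariant}; the remaining ingredients (étaleness of $T^{\sharp}\to T$, divisibility of $\complexes^{\times}$, finiteness of the cokernel of $T^{\sharp}(F)\to T(F)$) are routine.
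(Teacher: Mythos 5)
There is a genuine gap in the converse direction. You treat $\Pi(\widetilde T_{\beta})$ as the set of $\zeta$-genuine \emph{characters} of $\widetilde T_{\beta}$ and then produce an element of the fiber by extending $\bar\chi_{\sigma}$ from the finite-index subgroup $\mathrm{image}(\phi)\cong\widetilde T^{\sharp}/K$ to all of $\widetilde T_{\beta}$ using divisibility of $\complexes^{\times}$. But in this section $\mu$ is an arbitrary rigidified morphism, not an $\mathbb E_{\infty}$-monoidal one, so the cover $\widetilde T_{\beta}$ is in general \emph{noncommutative}: its commutator pairing $T(F)\times T(F)\rightarrow A$ is governed by the bilinear form $b$, and is nontrivial exactly when $T^{\sharp}\neq T$, i.e.\ in the interesting case. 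Any homomorphism $\widetilde T_{\beta}\rightarrow\complexes^{\times}$ kills commutators, so it cannot restrict to the injective $\zeta$ on $A$ when the commutator pairing is nontrivial; genuine characters of $\widetilde T_{\beta}$ then simply do not exist, even though $\LLC_{\beta}^{-1}(\sigma)$ is nonempty. So your extension step fails, and your identification of the fiber (and of the finite group acting on it) is incorrect in general.

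The paper's proof avoids this by invoking the Stone--von Neumann theorem: irreducible $\zeta$-genuine smooth representations of $\widetilde T_{\beta}$ are classified by genuine characters of the \emph{center} $\widetilde C_{\beta}$ of $\widetilde T_{\beta}$, which contains $\widetilde T^{\sharp}/K$ as a finite-index subgroup by Lemma \ref{lem-sharp-center-isocrystal}. When \eqref{eq-weissman-obstruction-kottwitz-invariant} holds, $\chi_{\sigma}$ descends to $\bar\chi_{\sigma}$ on $\widetilde T^{\sharp}/K$, and $\LLC_{\beta}^{-1}(\sigma)$ is in bijection with the extensions of $\bar\chi_{\sigma}$ to genuine characters of $\widetilde C_{\beta}$; such extensions exist and form a finite set because the inclusion has finite index. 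Your treatment of the ``only if'' direction (via Lemma \ref{lem-weissman-obstruction-isocrystal} and Corollary \ref{cor-weissman-obstruction-kottwitz-invariant}, with \eqref{eq-local-langlands-compatibility-with-central-character-isocrystal} commuting by construction) and your reduction of the fiber to representations on which $\widetilde T^{\sharp}$ acts by $\chi_{\sigma}$ agree with the paper; the missing ingredient is precisely the passage through $\widetilde C_{\beta}$ rather than through characters of the full cover.
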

\begin{proof}
Lemma \ref{lem-weissman-obstruction-isocrystal} and Corollary \ref{cor-weissman-obstruction-kottwitz-invariant} together imply that $\LLC_{\beta}^{-1}(\sigma)$ is empty when \eqref{eq-weissman-obstruction-kottwitz-invariant} fails. It remains to prove that when \eqref{eq-weissman-obstruction-kottwitz-invariant} holds, $\LLC_{\beta}^{-1}(\sigma)$ is nonempty and finite.

The L-parameter $\sigma$ corresponds, under the local Langlands correspondence for $(T^{\sharp}, \mu^{\sharp})$ (\emph{cf.}~\S\ref{void-torus-commutative-cover-local-langlands-correspondence}), to a $\zeta$-genuine smooth character $\chi_{\sigma}$ of $\widetilde T^{\sharp}$. By construction, a $\zeta$-genuine smooth representation $V$ of $\widetilde T_{\beta, \zeta}$ has L-parameter $\sigma$ if and only if $\widetilde T^{\sharp}$ acts on $V$ via $\chi_{\sigma}$. Since \eqref{eq-weissman-obstruction-kottwitz-invariant} holds, $\chi_{\sigma}$ annihilates the kernel of \eqref{eq-restriction-sharp-center-isocrystal-torus}, so it factors through a character
$$
\bar{\chi}_{\sigma} : \widetilde T^{\sharp}/K \rightarrow \complexes^{\times},
$$
where $\widetilde T^{\sharp}/K$ is identified with a subgroup of the center $\widetilde C_{\beta}$ of $\widetilde T_{\beta}$ (\emph{cf.}~Lemma \ref{lem-sharp-center-isocrystal}).

By the Stone--von Neumann theorem, $\Pi(\widetilde T_{\beta})$ is in bijection with genuine characters of $\widetilde C_{\beta}$. Hence $\LLC_{\beta}^{-1}(\sigma)$ is in bijection with extensions of $\bar{\chi}_{\sigma}$ along the inclusion
$$
\widetilde T^{\sharp}/K \subset \widetilde C_{\beta},
$$
which is of finite index. This implies that $\LLC_{\beta}^{-1}(\sigma)$ is nonempty and finite.
\end{proof}

\subsection{$Z$-isocrystals}
\label{sec-center-isocrystal}

\begin{void}
We return to the context where $G$ is a reductive group $F$-scheme. Given a $Z$-isocrystal $\beta$, we may consider the induced $G$-isocrystal, hence the group $F$-scheme $G_{\beta}$. There is a canonical isomorphism of group $F$-schemes
\begin{equation}
\label{eq-center-isocrystal-form-isomorphism}
G \xrightarrow{\simeq} G_{\beta},
\end{equation}
defined as follows: Restricting the action map $a : \deloop G \times \deloop Z \rightarrow \deloop G$ along the $Z$-isocrystal $\beta : X \rightarrow \deloop Z$ yields a morphism $a_{\beta} : \deloop G \times X \rightarrow \deloop G$, which induces \eqref{eq-center-isocrystal-form-isomorphism} on loop spaces.

In this subsection, we express the pullback of $\widetilde G_{\beta}$ along \eqref{eq-center-isocrystal-form-isomorphism} in terms of the cover $\widetilde G$.
\end{void}

\begin{void}
Evaluating the bilinear pairing $b_2 \otimes \deloop\Psi^{\otimes 2} : \deloop G_{\abelian} \otimes \deloop Z \rightarrow \deloop^4A(1)$ at $\beta : X \rightarrow \deloop Z$ and descending along $X\rightarrow \Spec F$ (\emph{cf.}~Lemma \ref{lem-weil-galois-torsion}), we obtain a $\integers$-linear morphism
\begin{equation}
\label{eq-bilinear-pairing-evaluation-at-center-isocrystal}
b_2\otimes \deloop\Psi^{\otimes 2}(\cdot, \beta) : \deloop G_{\abelian} \rightarrow \deloop^4A(1),
\end{equation}
which defines a rigidified morphism $\deloop G \rightarrow \deloop^4A(1)$ that we denote by the same expression.

The following result can be thought of as a ``dual version" of Proposition \ref{prop-isocrystal-center-cover}.
\end{void}

\begin{prop}
\label{prop-center-isocrystal-cover-identification}
For any $\beta \in \Isoc_Z$, there is a canonical isomorphism of covers of $G(F)$ with regard to the identification \eqref{eq-center-isocrystal-form-isomorphism}:
\begin{equation}
\label{eq-center-isocrystal-cover-identification}
\widetilde G_{\beta} \xrightarrow{\simeq} \widetilde G + \int_F (b_2\otimes \deloop\Psi^{\otimes 2})(\cdot, \beta).
\end{equation}
\end{prop}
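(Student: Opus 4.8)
The plan is to follow the proof of Proposition~\ref{prop-isocrystal-center-cover} almost verbatim, the only change being that the canonical quadratic structure of Proposition~\ref{prop-canonical-quadratic-structure} is now restricted along $(\id, \beta) : \deloop G \times X \to \deloop G \times \deloop Z$, with $\beta : X \to \deloop Z$ inserted into the \emph{second} factor of the action map (rather than along $(\beta, \id)$ as in \emph{loc.cit.}). As there, the isomorphism \eqref{eq-center-isocrystal-cover-identification} is produced by first identifying two rigidified ``integrand'' morphisms $\deloop G \times X \to \deloop^4 A(1)$ and then applying the $\integers$-linear construction functor $\int_F$.

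First I would reduce to a statement about integrands. Restricting the action map along $\beta : X \to \deloop Z$ produces $a_{\beta} : \deloop G \times X \to \deloop G$, which, by the very definition of \eqref{eq-center-isocrystal-form-isomorphism}, is identified --- via $G \xrightarrow{\simeq} G_{\beta}$ --- with the translation morphism \eqref{eq-isocrystal-classifying-stack-translation-map} attached to the $G$-isocrystal induced by $\beta$. Unwinding the construction of $\Translation_{\beta}$ in \S\ref{void-isocrystal-translation-functor} then gives an identification of covers of $G(F)$
\[
\widetilde G_{\beta} \xrightarrow{\simeq} \int_F \bigl[\, (a_{\beta})^*\mu - p^*\beta^*\mu \,\bigr],
\]
where $p : \deloop G \times X \to X$ is the projection, $\beta^*\mu$ is the section of $\deloop^4A(1)$ over $X$, and the bracketed rigidified morphism is carried back to a rigidified morphism $\deloop G \to \deloop^4 A(1)$ by the equivalence \eqref{eq-isoc-classifying-stack-cohomology} (valid with $G$ in place of $G_{\beta}$, by Lemma~\ref{lem-weil-galois-torsion}); note that $(a_{\beta})^*\mu$ and $p^*\beta^*\mu$ agree over $e \times X$, so their difference is canonically rigidified there.

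Next I would identify the integrand by restricting the canonical isomorphism \eqref{eq-canonical-quadratic-structure} along $(\id, \beta)$. Since $p_1 \circ (\id, \beta)$ is the projection $p_1 : \deloop G \times X \to \deloop G$ while $p_2 \circ (\id, \beta)$ factors through $\beta : X \to \deloop Z$, the term $(p_2)^*\mu_Z$ pulls back to $p^*\beta^*\mu_Z = p^*\beta^*\mu$ (the last equality because $\beta$ factors through $\deloop Z$, along which $\mu$ restricts to $\mu_Z$). We thus obtain a canonical isomorphism of rigidified morphisms $\deloop G \times X \to \deloop^4 A(1)$
\[
(a_{\beta})^*\mu - p^*\beta^*\mu \xrightarrow{\simeq} (p_1)^*\mu + (b_2 \otimes \deloop\Psi^{\otimes 2})(\cdot, \beta),
\]
where the second summand on the right is, by \eqref{eq-bilinear-pairing-evaluation-at-center-isocrystal}, the restriction of $b_2 \otimes \deloop\Psi^{\otimes 2}$ along $(\id, \beta)$. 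Transporting along \eqref{eq-isoc-classifying-stack-cohomology} --- under which $(p_1)^*\mu$ corresponds to $\mu$ and the second summand to the rigidified morphism written $(b_2 \otimes \deloop\Psi^{\otimes 2})(\cdot, \beta)$ in the statement --- and then applying the $\integers$-linear functor $\int_F$ together with the reduction above yields \eqref{eq-center-isocrystal-cover-identification}.

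The hard part will be the bookkeeping in the first step: verifying that the abstract translation morphism \eqref{eq-isocrystal-classifying-stack-translation-map} really coincides, under the explicit isomorphism \eqref{eq-center-isocrystal-form-isomorphism}, with the restriction $a_{\beta}$ of the action map --- matching rigidifications and base points along the way. This amounts to observing that the automorphism sheaf of a $G$-torsor twisted by a central cocycle is again $G$, conjugation acting trivially on the twist; it is morally the content of \eqref{eq-center-isocrystal-form-isomorphism}, but it is the place where a sign or a mismatched base point could creep in. Everything else is a formal consequence of Proposition~\ref{prop-canonical-quadratic-structure} and the $\integers$-linearity of $\int_F$, exactly as in Proposition~\ref{prop-isocrystal-center-cover}.
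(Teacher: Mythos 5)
Your proposal is correct and follows essentially the same route as the paper: restrict the canonical quadratic structure \eqref{eq-canonical-quadratic-structure} along $(\id, \beta) : \deloop G \times X \rightarrow \deloop G \times \deloop Z$ to identify the integrand $(a_{\beta})^*\mu - p^*\beta^*\mu$ with $\mu + (b_2\otimes\deloop\Psi^{\otimes 2})(\cdot,\beta)$ (descending via Lemma \ref{lem-weil-galois-torsion}), then apply the construction functor \eqref{eq-construction-of-covers}. The ``bookkeeping'' you flag --- that $a_{\beta}$ matches the translation morphism \eqref{eq-isocrystal-classifying-stack-translation-map} under \eqref{eq-center-isocrystal-form-isomorphism} --- is left implicit in the paper as well, so you have not introduced any gap beyond what the paper itself elides.
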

\begin{proof}
Consider the pullback $p^*\beta^*\mu$ of $\mu$ along the projection $p : \deloop G \times X \rightarrow X$ and the $G$-isocrystal $\beta : X \rightarrow \deloop G$.

The isomorphism \eqref{eq-canonical-quadratic-structure}, restricted along $(\id, \beta) : \deloop G \times X \rightarrow \deloop G\times\deloop Z$, yields an isomorphism of rigidified section of $\deloop^4A(1)$ over $\deloop G\times X$:
\begin{equation}
\label{eq-center-isocrystal-parameter-identification}
(a_{\beta})^*\mu - p^*\beta^*\mu \xrightarrow{\simeq} \mu + (b_2\otimes \deloop\Psi^{\otimes 2})(\cdot, \beta),
\end{equation}
or equivalently, over $\deloop G$ (\emph{cf.}~Lemma \ref{lem-weil-galois-torsion}).

The isomorphism \eqref{eq-center-isocrystal-cover-identification} is the image of \eqref{eq-center-isocrystal-parameter-identification} under \eqref{eq-construction-of-covers}.
\end{proof}

\begin{rem}
\label{rem-kazhdan-patterson-linear-difference}
Proposition \ref{prop-center-isocrystal-cover-identification} expresses $\widetilde G_{\beta}$ as the image of the rigidified morphism $\mu + (b_2\otimes \deloop\Psi^{\otimes 2})(\cdot, \beta)$ under \eqref{eq-construction-of-covers}.

If $\mu$ is the \'etale realization of a central extension of $G$ by $\Ktheory_2$ (\emph{cf.}~\cite[\S2.3]{zhao2022metaplectic}), one may wonder whether the rigidified morphism $\mu + (b_2 \otimes \deloop\Psi^{\otimes 2})(\cdot, \beta)$ also comes from \'etale realization. This is generally \emph{not} the case.

For a ``naturally occurring" example, let us take $G := \GL_2$ endowed with the Kazhdan--Patterson cover, viewed as a central extension $E$ of $G$ by $\Ktheory_2$ (\emph{cf.}~\cite[\S13.2]{MR3802419}). Assuming $\characteristic F \neq 2$, the latter defines a rigidified morphism $\mu : \deloop G \rightarrow \deloop^4\{\pm 1\}^{\otimes 2}$ under \'etale realization. Identifying both $\pi_1 G$ and $\Fib(\Lambda \rightarrow \Lambda_{\adjoint})$ with $\integers$, the bilinear form $b_2$ is given by
$$
b_2 : \integers \otimes \integers \rightarrow \integers/2,\quad 1\otimes 1\mapsto 1.
$$

We argue that $(b_2\otimes \deloop\Psi^{\otimes 2})(\cdot, \beta)$ (hence its sum with $\mu$) does not lift to a central extension of $G$ by $\Ktheory_2$, unless $\beta$ is the trivial $Z$-isocrystal. Indeed, $(b_2\otimes \deloop\Psi^{\otimes 2})(\cdot, \beta)$ arises as the tensor product of $\Psi$ with a $\integers$-linear morphism
\begin{equation}
\label{eq-kazhdan-patterson-cover-linear-difference}
\pi_1 G \rightarrow \deloop^2\{\pm 1\},
\end{equation}
which sends the generator of $\pi_1 G\cong \integers$ to the Kummer gerbe $\deloop\Psi(\beta)$ of $\beta$---the latter represents a nontrivial class in $H^2(\Spec F, \{\pm 1\})$ when $\beta$ is nontrivial. If $(b_2\otimes\deloop\Psi^{\otimes 2})(\cdot, \beta)$ lifts to a central extension of $G$ by $\Ktheory_2$, then the $\mathbb E_1$-monoidal morphism $\Lambda \rightarrow \deloop^2\{\pm 1\}$, obtained by pre-composing \eqref{eq-kazhdan-patterson-cover-linear-difference} with the projection $\Lambda\twoheadrightarrow \pi_1 G$, can be expressed in terms of the second Brylinski--Deligne invariant of $E$, \emph{i.e.}~it factors as a monoidal morphism
$$
\Lambda \rightarrow \deloop \mathbb G_m \xrightarrow{\deloop\Psi} \deloop^2\{\pm 1\}.
$$
This implies that $\deloop\Psi(\beta)$ lifts to a section of $\deloop \mathbb G_m$ over $\Spec F$, hence trivial by Hilbert 90.
\end{rem}

\bibliographystyle{amsalpha}
\bibliography{bibliography.bib}

\end{document}